\newcommand{\al}{\alpha}
\newcommand{\be}{\beta}
\newcommand{\ga}{\gamma}
\newcommand{\de}{\delta}
\newcommand{\io}{\iota}
\newcommand{\la}{\lambda}
\newcommand{\om}{\omega}
\newcommand{\si}{\sigma}
\renewcommand{\th}{\theta}
\newcommand{\ze}{\zeta}
\newcommand{\bfe}{{\bf e}}
\newcommand{\bt}{{\bm \theta}}
\newcommand{\bx}{{\bf x}}
\newcommand{\bbC}{{\mathbb C}}
\newcommand{\bbN}{{\mathbb N}}
\newcommand{\bbZ}{{\mathbb Z}}
\newcommand{\cA}{{\mathcal A}}
\newcommand{\cS}{{\mathcal S}}
\newcommand{\zh}{\hat{0}}
\newcommand{\oh}{\hat{1}}
\newcommand{\ptn}{\vdash}
\newcommand{\case}[4]{\left\{\barr{ll}#1&\mbox{#2}\\#3&\mbox{#4}\earr\right.}
\def\<{\langle}
\def\>{\rangle}
\newcommand{\spn}[1]{\langle{#1}\rangle}
\newcommand{\ra}{\rightarrow}
\newcommand{\fS}{{\mathfrak S}}
\newcommand{\St}{\tilde{S}}
\newcommand{\fb}{\ol{f}}
\newcommand{\Sb}{\ol{S}}
\def\multiset#1#2{\ensuremath{\left(\kern-.3em\left(\genfrac{}{}{0pt}{}{#1}{#2}\right)\kern-.3em\right)}}
\newcommand{\ben}{\begin{enumerate}}
\newcommand{\een}{\end{enumerate}}
\newcommand{\ble}{\begin{lem}}
\newcommand{\ele}{\end{lem}}
\newcommand{\bth}{\begin{thm}}
\newcommand{\eth}{\end{thm}}
\newcommand{\bpr}{\begin{prop}}
\newcommand{\epr}{\end{prop}}
\newcommand{\bco}{\begin{cor}}
\newcommand{\eco}{\end{cor}}
\newcommand{\bcon}{\begin{conj}}
\newcommand{\econ}{\end{conj}}
\newcommand{\bde}{\begin{defn}}
\newcommand{\ede}{\end{defn}}
\newcommand{\bex}{\begin{exa}}
\newcommand{\eex}{\end{exa}}
\newcommand{\barr}{\begin{array}}
\newcommand{\earr}{\end{array}}
\newcommand{\btab}{\begin{tabular}}
\newcommand{\etab}{\end{tabular}}
\newcommand{\beq}{\begin{equation}}
\newcommand{\eeq}{\end{equation}}
\newcommand{\bea}{\begin{eqnarray*}}
\newcommand{\eea}{\end{eqnarray*}}
\newcommand{\bal}{\begin{align*}}
\newcommand{\bce}{\begin{center}}
\newcommand{\ece}{\end{center}}
\newcommand{\bpi}{\begin{picture}}
\newcommand{\epi}{\end{picture}}
\newcommand{\bpp}{\begin{picture}}
\newcommand{\epp}{\end{picture}}
\newcommand{\bfi}{\begin{figure} \begin{center}}
\newcommand{\efi}{\end{center} \end{figure}}
\newcommand{\bprf}{\begin{proof}}
\newcommand{\eprf}{\end{proof}\medskip}
\newcommand{\bsl}{\begin{slide}{}}
\newcommand{\esl}{\end{slide}}
\newcommand{\bfr}{\begin{frame}}
\newcommand{\efr}{\end{frame}}
\newcommand{\dil}{\displaystyle}
\DeclareMathOperator{\inv}{inv}
\DeclareMathOperator{\Inv}{Inv}
\DeclareMathOperator{\rk}{rk}
\DeclareMathOperator{\sgn}{sgn}
\newcommand{\comp}{\models}
\newcommand{\hqed}{\hfill \qed}
\newcommand{\eqqed}[1]{$\rule{1ex}{0ex}\hfill{\dil#1}\hfill\qed$}
\newcommand{\ol}{\overline}
\newcommand{\hs}[1]{\hspace{#1}}
\newcommand{\hso}[1]{\hspace{-1pt}}
\newcommand{\vs}[1]{\vspace{#1}}
\newcommand{\sbe}{\subseteq}
\newcommand{\setm}{\setminus}
\newcommand{\iso}{\cong}
\newtheorem{thm}{Theorem}[section]
\newtheorem{prop}[thm]{Proposition}
\newtheorem{cor}[thm]{Corollary}
\newtheorem{lem}[thm]{Lemma}
\newtheorem{conj}[thm]{Conjecture}
\theoremstyle{definition}
\newtheorem{defn}[thm]{Definition}
\newtheorem{exa}[thm]{Example}
\newtheorem{rem}[thm]{Remark}
\newcommand{\Lb}{\ol{L}}
\newcommand{\Pib}{\ol{\Pi}}
\newcommand{\da}{\hs{-2pt}\downarrow}
\DeclareMathOperator{\minb}{minb}
\DeclareMathOperator{\maxb}{maxb}
\DeclareMathOperator{\Hilb}{Hilb}
\DeclareMathOperator{\rev}{rev}
\DeclareMathOperator{\CR}{CR}
\DeclareMathOperator{\RG}{RG}
\DeclareMathOperator{\SRG}{SRG}
\newcommand{\SA}{\cS\cA}
\newcommand{\etalchar}[1]{$^{#1}$}
\begin{document}
\pagestyle{plain}

\title{Stirling numbers for complex reflection groups
}
\author{Bruce E. Sagan}
\address{ Department of Mathematics\\ Michigan State University\\
 East Lansing, MI 48824, USA}
\email{ sagan@math.msu.edu}

\author{Joshua P. Swanson}
\address{Department of Mathematics\\ Univesity of Southern California\\
\small Los Angeles, CA 90007, USA}
\email{swansonj@usc.edu}

\date{\today}

\subjclass{05A05, 05A18  (Primary) 05A15, 05A30, 05E05, 05E16, 20F55  (Secondary)}
%     https://mathscinet.ams.org/mathscinet/msc/msc2020.html

\keywords{Artin basis, complex reflection group, super coinvariant algebra, generating function, Hilbert series, intersection lattice, $q$-analogue, permutation, set partition, Stirling number, symmetric polynomial}
 
\begin{abstract}
In an earlier paper, we defined and studied $q$-analogues of the Stirling numbers of both types for the Coxeter group of type $B$.  In the present work, we show how this approach can be extended to all irreducible complex reflection groups $G$.  The Stirling numbers of the first and second kind are defined via the Whitney numbers of the first and second kind, respectively, of the intersection lattice of $G$.  For the groups
$G(m,p,n)$, these numbers and polynomials can be given combinatorial interpretations in terms of various statistics.  The ordered version of ths $q$-Stirling numbers of the second kind also show up in conjectured Hilbert series for certain super coinvariant algebras.
\end{abstract}

\maketitle

\tableofcontents

%%%%%  Introduction

\section{Introduction}

The purpose of this work is to define and study Stirling numbers of both kinds for any irreducible complex reflection group $G$.  In order to motivate our construction, we first examine the classic case in type $A$.

We will use the following notation for two subsets of the integers, $\bbZ$,
\begin{align*}
    \bbN&=\{0,1,2,\ldots\},\\
    [n]&=\{1,2,\ldots,n\}.
\end{align*}
 If $S$ is a finite set then a {\em set partition}, $\si$, of $S$ is an unordered collection of nonempty subsets $S_1,S_2,\ldots, S_k$  with disjoint union $\uplus_i S_i = S$.  The $S_i$ are called {\em blocks} and we write $\si=S_1/S_2/\ldots/S_k\ptn S$ where set braces and commas are often eliminated in the $S_i$.  For example, if $S=[9]$ then the partition $\si$ with blocks $\{1,3,4\}$, $\{2,7\}$, and $\{5,6,8\}$ would be written
$\si=134/27/568$.  If we let 
$$
S([n],k) = \{\si\ptn[n] \mid \text{$\si$ has $k$ blocks}\}
$$
then the {\em Stirling numbers of the second kind} are 
$$
S(n,k) = \# S([n],k)
$$
where the hash tag denotes cardinality and $n\in\bbN$, $k\in\bbZ$.  Note that this definition implies that $S(n,k)=0$ for $k<0$ or $k>n$.

Let $\fS_n$ denote the symmetric group of all permutations of $[n]$.  Then every $\pi\in\fS_n$ can be written as a disjoint product of cycles $\pi=c_1 c_2\ldots c_n$.  
Letting
$$
c([n],k) = \{\pi\in\fS_n \mid \text{$\pi$ has $k$ disjoint cycles}\},
$$
the {\em Stirling numbers of the first kind} are
$$
s(n,k) = 
(-1)^{n-k} \# c([n],k).
$$

In order to explain the sign in the definition of $s(n,k)$ as well as our generalization to complex reflection groups, we need the partition lattice.  If $\si=S_1/\ldots/S_k$ and $\tau=T_1/\ldots/T_k$ are partitions of the same set $S$ then $\si$ is a {\em refinement} of $\tau$, written $\si\le\tau$, if every $S_i$ is contained in some $T_j$.
Refinement is a partial order on the partitions of $S$ which is, in fact, a lattice in the order theoretic sense.  Let $\Pi_n$ denote the lattice of partitions of $[n]$.

Suppose that $P$ is a finite poset (partially ordered set) with a unique minimum element $\zh$.  The {\em M\"obius function} of $P$ is $\mu:P\ra\bbZ$ defined inductively by the equation
$$
\mu(x)=\case{1}{if $x=\zh$,}{-\dil\sum_{y<x} \mu(y)}{if $x>\zh$.}
$$
Suppose that $P$ is {\em ranked}, which means that for any $x\in P$, all maximal chains from $\zh$ to $x$ have the same length which is denoted 
$\rk x$ and called the {\em rank of $x$}.  Given $k\in\bbZ$, define the {\em Whitney numbers of the second and first kinds} for $P$ as
$$
W(P,k) = \#\{ x\in P  \mid \rk x = k\}
$$
and
$$
w(P,k) = \sum_{\rk x = k} \mu(x),
$$
respectively.  The following result is well known, for example see~\cite[Section 5.4]{Sag:aoc}.
\begin{thm}\label{thm:Ww-Ss}
\label{Pi_n}
We have

\vs{10pt}

\eqqed{
\text{$W(\Pi_n,k) = S(n,n-k)$ and $w(\Pi_n,k)=s(n,n-k)$.}
}
\end{thm}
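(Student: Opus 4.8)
The plan is to treat $\Pi_n$ as a ranked poset, read off the Whitney numbers of the second kind directly, and obtain those of the first kind from the M\"obius function. First I would pin down the bottom element and rank function. The minimum $\zh$ of $\Pi_n$ is the all-singletons partition $1/2/\cdots/n$, and a covering relation $\si\lec\tau$ merges exactly two blocks of $\si$ into a single block of $\tau$. Thus each cover step lowers the block count by one, so a partition $\si$ with $j$ blocks satisfies $\rk\si = n-j$. The Whitney number of the second kind is then immediate, since $W(\Pi_n,k)$ counts partitions of rank $k$, i.e.\ those with $n-k$ blocks, and this is exactly $S(n,n-k)$.

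For the first kind the key input is the M\"obius function $\mu(\si)=\mu(\zh,\si)$. I would invoke two standard facts about $\Pi_n$ (see \cite[Section 5.4]{Sag:aoc}): that for $\si=S_1/\cdots/S_j$ the lower interval factors as $[\zh,\si]\iso\Pi_{|S_1|}\times\cdots\times\Pi_{|S_j|}$, and that the M\"obius function from bottom to top of the whole lattice is $(-1)^{n-1}(n-1)!$ in $\Pi_n$. By multiplicativity of $\mu$ over direct products these combine to give
\[
\mu(\si)=\prod_{i=1}^{j}(-1)^{|S_i|-1}(|S_i|-1)!.
\]

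Next I would sum over partitions of a fixed rank $k$, i.e.\ those with $j=n-k$ blocks. Pulling out the sign via $\prod_i(-1)^{|S_i|-1}=(-1)^{\sum_i(|S_i|-1)}=(-1)^{n-j}$ yields
\[
w(\Pi_n,k)=(-1)^{n-j}\sum_{\substack{\si\ptn[n]\\ \si\ \text{has}\ j\ \text{blocks}}}\ \prod_{i=1}^{j}(|S_i|-1)!.
\]
The final ingredient is the classical identity that the remaining sum equals $\#c([n],j)$: a permutation with $j$ cycles is built by partitioning $[n]$ into $j$ nonempty blocks and imposing a cyclic order on each block, with a block of size $m$ admitting $(m-1)!$ cyclic orders. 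Hence $w(\Pi_n,k)=(-1)^{n-j}\#c([n],j)=s(n,j)=s(n,n-k)$, completing the proof.

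The main obstacle is the M\"obius computation itself: the value $(-1)^{n-1}(n-1)!$ for $\Pi_n$ is not formal and, if not quoted, requires genuine work, typically an induction on $n$ via the defining recurrence for $\mu$ or a crosscut/NBC argument. Given the cited reference I would take the interval factorization and this M\"obius value as known inputs, and devote the actual effort to verifying the cycle-counting identity cleanly, as the rank and sign manipulations are otherwise routine bookkeeping.
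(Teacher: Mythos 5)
Your proof is correct. Note that the paper gives no argument for this theorem at all: it is stated as well known, with a pointer to \cite[Section 5.4]{Sag:aoc}, so there is no internal proof to compare against. Your write-up is essentially the standard argument that the citation contains: the rank of a partition with $j$ blocks is $n-j$ (giving the Whitney numbers of the second kind immediately), then the interval factorization $[\zh,\si]\iso\Pi_{|S_1|}\times\cdots\times\Pi_{|S_j|}$, multiplicativity of $\mu$ over products, the value $\mu(\Pi_n)=(-1)^{n-1}(n-1)!$, and the observation that a block of size $m$ supports $(m-1)!$ cycles. It is worth pointing out that this is also precisely the strategy the paper itself uses when it generalizes this statement: the theorem in Section~\ref{ci} computing $\mu(\si)$ for colored partitions in $\Pi_{m,n}$ and $\Pib_{m,n}$ proceeds by the same interval factorization, multiplicativity of $\mu$, and a cycle-counting bijection (there with full zero cycles and $m$-tuples of cycles), so your argument is in effect the $m=1$ case of the paper's own method. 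Your closing remark is also sound: the only nontrivial input is $\mu(\Pi_n)=(-1)^{n-1}(n-1)!$, which you are entitled to quote from the same reference, or prove by induction via the defining recurrence for $\mu$.
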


To connect the previous theorem with complex reflection groups we will need intersection lattices.  A set $\cA=\{H_1,H_2,\ldots,H_k\}$ of hyperplanes in $\bbC^n$ is called an {\em arrangement}.  The {\em intersection lattice} of $\cA$ is the set $L(\cA)$ of all subspaces of $\bbC^n$ which can be formed by intersecting the $H_i$ partially ordered by reverse inclusion.  Note that $L(\cA)$ has a $\zh$, namely $\bbC^n$ which is the empty intersection, and is ranked by codimension.  As a Coxeter group $\fS_n=A_{n-1}$ has reflecting hyperplanes \begin{equation}
   \label{A_{n-1}} 
   \cA_{n-1}=\{ X_i=X_j \mid 1\le i< j\le n\},
\end{equation}
where $X_i$ is the $i$th coordinate function on $\bbC^n$.  Note our unusual use of capital $X$ to distinguish these functions from the variables we will use for symmetric polynomials later.
From this description it is easy to see that we have an isomorphism of posets $L(\cA_{n-1})\iso \Pi_n$.  Thus \Cref{Pi_n} can be viewed as a statement about $A_{n-1}$.  

We are finally ready to define our generalizations of the Stirling numbers.  A {\em pseudoreflection} is a linear map $\rho:\bbC^n\ra\bbC^n$ whose fixed point set is a hyperplane and which is of finite order.  A {\em complex reflection group} is a group, $G$, generated by pseudoreflections.  The group is {\em irreducible} if
the only $G$-invariant subspaces are $\bbC^n$ and the origin, and in this case $n$ is called the {\em rank} of $G$.
The finite irreducible complex reflection groups were classified by Shephard and Todd~\cite{ST:fur} into essentially one infinite family $G(m, p, n)$ and $34$ exceptional groups.
Let $\cA_G$ be the set of hyperplanes for the  pseudoreflections in $G$.  We define the {\em Stirling numbers of the second and first kinds} for $G$ by
\begin{equation}
  \label{S(G,k)}
  S(G,k) = W(L(\cA_G), n-k)
\end{equation}
and
\begin{equation}
    \label{s(G,k)}
    s(G,k) = w(L(\cA_G), n-k),
\end{equation}
respectively.

Of course, a definition is only as good as the theorems it can help prove.   In fact, one result indicating that~\eqref{S(G,k)} and~\eqref{s(G,k)} are worthy of study is already in the literature.  Let $\bx_n=\{x_1,x_2,\ldots,x_n\}$ be a set of variables.  
The corresponding {\em complete homogeneous symmetric polynomial of degree $k$}, denoted
$h_k(n)=h_k(x_1,\ldots,x_n)$, is the sum of all monomials in $\bx_n$ of degree $k$.
By contrast, the {\em elementary symmetric polynomial of degree $k$}, written $e_k(n)=e_k(x_1,\ldots,x_n)$, is the sum of all such square-free monomials.  The ordinary Stirling numbers are just specializations of these polynomials, in particular
\begin{equation}
\label{Shse}
    S(n,k)=h_{n-k}(1,2,\ldots,k) \text{ and }s(n,k)=(-1)^{n-k}e_{n-k}(1,2,\ldots,n-1).
\end{equation}

The following theorem was proved by Orlik and Solomon, although it was stated in a different form.  It shows how $G$'s Stirling numbers of the first kind can be computed using its coexponents, certain invariants of the group.
\begin{thm}[\cite{OS:urg}]
\label{OS}
If $G$ is an irreducible complex reflection group with coexponents
$e_1^*,\ldots,e_n^*$ then

\vs{10pt}

\eqqed{ s(G,k) = (-1)^{n-k} e_{n-k}(e_1^*,\ldots,e_n^*).}
\end{thm}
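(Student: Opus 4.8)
The plan is to recognize that \Cref{OS} merely repackages the Orlik--Solomon factorization of the Poincar\'e polynomial of the arrangement complement into the language of Whitney numbers and elementary symmetric polynomials. The whole argument is a translation between three equivalent encodings of the same M\"obius data for $L(\cA_G)$: its characteristic polynomial, its Whitney numbers of the first kind, and the Betti numbers of the complement $M(\cA_G)=\bbC^n\setminus\bigcup_{H\in\cA_G}H$.

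\emph{Step 1: Whitney numbers as coefficients of the characteristic polynomial.} I would first record that, because $\cA_G$ is central, every $x\in L(\cA_G)$ has $\dim x=n-\rk x$. Grouping the defining sum of the characteristic polynomial by rank and using definition~\eqref{s(G,k)} of $w(L(\cA_G),j)$ then gives
$$\chi(L(\cA_G),t)=\sum_{x\in L(\cA_G)}\mu(x)\,t^{\dim x}=\sum_{j=0}^{n}w(L(\cA_G),j)\,t^{\,n-j}.$$
Thus the Whitney numbers of the first kind are exactly the coefficients of $\chi$.

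\emph{Step 2: the Orlik--Solomon factorization.} Orlik and Solomon proved that $M(\cA_G)$ has Poincar\'e polynomial $\operatorname{Poin}(M(\cA_G),t)=\prod_{i=1}^{n}(1+e_i^*t)$, where the $e_i^*$ are the coexponents of $G$. Via the standard relation $\chi(L(\cA_G),t)=t^{\,n}\,\operatorname{Poin}(M(\cA_G),-t^{-1})$ this is equivalent to the factorization
$$\chi(L(\cA_G),t)=\prod_{i=1}^{n}(t-e_i^*).$$
Expanding the right-hand side as $\sum_{j=0}^{n}(-1)^{j}e_j(e_1^*,\ldots,e_n^*)\,t^{\,n-j}$ and comparing the coefficient of $t^{\,n-j}$ with Step~1 yields $w(L(\cA_G),j)=(-1)^{j}e_j(e_1^*,\ldots,e_n^*)$ for every $j$.

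\emph{Step 3: conclusion.} Setting $j=n-k$ and recalling $s(G,k)=w(L(\cA_G),n-k)$ from~\eqref{s(G,k)} then gives $s(G,k)=(-1)^{n-k}e_{n-k}(e_1^*,\ldots,e_n^*)$, as claimed. Since all the mathematical content is carried by the cited theorem, the only genuine obstacle is bookkeeping of conventions: I must confirm that the coexponent indexing of~\cite{OS:urg} matches the paper's, and, because the reference states its result for the cohomology of the complement rather than for $\chi$, I must carefully verify the sign in the transformation $\chi(L(\cA_G),t)=t^{\,n}\,\operatorname{Poin}(M(\cA_G),-t^{-1})$ before comparing coefficients. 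Once these are pinned down, the coefficient comparison is immediate.
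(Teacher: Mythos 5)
Your proposal is correct and takes essentially the same route as the paper: the paper offers no proof of its own for this theorem, stating it as a citation to \cite{OS:urg} with the remark that the result ``was stated in a different form'' there. Your Steps 1--3 supply precisely the standard translation the paper implicitly relies on---Whitney numbers of the first kind as coefficients of the characteristic polynomial, the sign-checked identity $\chi(L(\cA_G),t)=t^{n}\operatorname{Poin}(M(\cA_G),-t^{-1})$, and the resulting factorization $\chi(L(\cA_G),t)=\prod_{i=1}^{n}(t-e_i^*)$---so the only content beyond the cited theorem is correct bookkeeping, which you have done.
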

We will see that a similar statement can be made for $S(G,k)$ in some, but not all, irreducible complex reflection groups.

The rest of this paper is organized as follows.  
In the next section we will see how one can give combinatorial interpretations to the Stirling numbers for the groups $G(m,p,n)$ in terms of colored partitions and permutations.  This will enable us to give an even more refined connection between the M\"obius function of the intersection lattice and certain permutations indexed by partitions.
Section~\ref{iqa} will be devoted to studying an inversion-like statistic on colored partitions and permutations.  The resulting $q$-analogues satisfy generalizations of the equalities~\eqref{Shse}.  In Section~\ref{osn}, we study two ordered analogues of the $q$-Stirling numbers of the second kind.  We show that they satisfy nice alternating sum identities using both properties of symmetric polynomials and sign-reversing involutions.
Section~\ref{ca} explores a conjectured connection between the ordered Stirling numbers of the second kind and certain super coinvariant algebras.  We end with a section devoted to miscellaneous identities and an open problem.

\section{A combinatorial interpretation}
\label{ci}

It turns out that one can give a combinatorial interpretation to the $S(G,k)$ for certain complex reflection groups in terms of colored partitions.  This generalizes work of Zaslavsky~\cite{zas:grs} in types $A$, $B$, and $D$.  Using this interpretation  we will be able to derive formulas for these Stirling numbers in terms of complete homogeneous symmetric functions.  We will also show how individual M\"obius values for elements in the intersection lattice of $G$ can be interpreted in terms of colored permutations.

\subsection{Hyperplanes for \texorpdfstring{$G(m,p,n)$}{G(m,p,n)}}

The groups we will be concerned with in this section are $G(m,p,n)$, where the three parameters are positive integers with $p \mid m$, defined as follows. Let
$$
\mu_m = \{\ze\in\bbC \mid \ze^m=1\}
$$
be the $m$th roots of unity.  If $p$ divides $m$ then $G(m,p,n)$ is the set of all $n\times n$ matrices satisfying the following constraints.
\begin{enumerate}
    \item Each row and column contains exactly one non-$0$ entry.  Let $\ze_i$ be the entry in row $i$.
    \item We have $\ze_i\in\mu_m$ for all $i\in[n]$.
    \item We have $(\ze_1\ze_2\cdots\ze_n)^{m/p}=1$.
\end{enumerate}
The pseudoreflections and corresponding hyperplanes in $G(m,p,n)$ are well-known; the following is included to keep our exposition self-contained. As usual, the standard basis vectors for $\bbC^n$ are denoted
$\bfe_1,\ldots,\bfe_n$.

\begin{lem}
\label{H}
The pseudoreflections $\rho\in G(m,p,n)$ are of two forms.
\begin{enumerate}
    \item[(a)]  For some $i\in[n]$ and $\ze\in\mu_{m/p}\setm\{1\}$
$$
\text{$\rho(\bfe_i)=\ze \bfe_i$ and 
$\rho(\bfe_k)=\bfe_k$ for $k\neq i$.}
$$
    \item[(b)]  For some $i,j\in[n]$ and $\ze\in\mu_m$
$$
\text{$\rho(\bfe_i)=\ze \bfe_j$,
$\rho(\bfe_j)=\ze^{-1} \bfe_i$ and 
$\rho(\bfe_k)= \bfe_k$ for $k\neq i,j$.}
$$
\end{enumerate}
The corresponding hyperplanes are as follows.
\begin{enumerate}
    \item[(a)]  The hyperplanes $X_i=0$ for some $i\in[n]$ (provided $p<m$).
    \item[(b)]  The hyperplanes $\ze X_i=X_j$ for some $i,j\in[n]$ and $\ze\in\mu_m$ (for all $p$),
\end{enumerate}
\end{lem}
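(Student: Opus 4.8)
\section*{Proof proposal}

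The plan is to exploit the monomial (generalized permutation) matrix structure of $G(m,p,n)$ and to reduce the pseudoreflection condition to a statement about the dimension of the fixed space, which I can read off from the cycle decomposition of the underlying permutation. First I would record the shape of a general element $g\in G(m,p,n)$: it has an underlying permutation $\si\in\fS_n$ together with scalars $c_1,\dots,c_n$, so that $g\bfe_i=c_i\bfe_{\si(i)}$; the multiset $\{c_i\}$ agrees with the row entries $\{\ze_i\}$, so in particular $\prod_i c_i=\prod_i\ze_i$ and the defining constraint reads $(\prod_i c_i)^{m/p}=1$. Since $G(m,p,n)$ is finite, every $g$ has finite order and is therefore diagonalizable over $\bbC$; hence the fixed point set of $g$ is a hyperplane precisely when the eigenvalue $1$ occurs with multiplicity exactly $n-1$.

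The key step is to compute that multiplicity. The coordinate subspaces attached to the cycles of $\si$ are $g$-invariant, so I can work one cycle at a time. On a cycle $(i_1\,i_2\,\cdots\,i_\ell)$ whose scalars multiply to $P=c_{i_1}\cdots c_{i_\ell}$, the map $g$ is a weighted cyclic shift, so $g^\ell$ acts as $P\cdot\mathrm{Id}$ on this block and the eigenvalues are the $\ell$ distinct $\ell$-th roots of $P$. Consequently the eigenvalue $1$ appears on the block with multiplicity $1$ if $P=1$ and $0$ otherwise. Therefore $\dim\{v:gv=v\}$ equals the number of cycles of $\si$ whose scalar product is $1$, and in particular it is at most the number $c$ of cycles of $\si$.

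Demanding that this dimension equal $n-1$ forces $c\ge n-1$, leaving exactly two possibilities. Either $c=n$, so that $\si=\mathrm{id}$ and, since $n-1$ of the length-one cycles must have scalar $1$, there is precisely one $c_i\ne1$; this is form (a). Or $c=n-1$, so that $\si$ is a single transposition $(i\,j)$ with every other scalar equal to $1$; here the requirement that all $n-1$ cycles have product $1$ forces the length-two block to satisfy $c_ic_j=1$, so that its eigenvalues are $\pm1$, and this is form (b). The case $c\le n-2$ cannot occur, since then the fixed space has dimension at most $n-2$. This dichotomy is exactly the split asserted in the lemma.

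Finally I would impose membership in $G(m,p,n)$ and extract the hyperplanes. In the diagonal case the product of the scalars equals the single nontrivial value $\ze$, so $(\prod_i\ze_i)^{m/p}=1$ becomes $\ze\in\mu_{m/p}\setm\{1\}$, which is nonempty exactly when $p<m$; solving $\ze v_i=v_i$ with $\ze\ne1$ gives the fixed hyperplane $X_i=0$. In the transposition case $\rho(\bfe_i)=\ze\bfe_j$ forces $\rho(\bfe_j)=\ze^{-1}\bfe_i$, so the total product of the $\ze_i$ is $1$ and the constraint holds automatically for every $p$ and every $\ze\in\mu_m$; solving $\rho(v)=v$ then yields $v_j=\ze v_i$, i.e.\ the hyperplane $\ze X_i=X_j$. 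The main obstacle is the eigenvalue bookkeeping on a single cyclic block, namely pinning down the multiplicity of the eigenvalue $1$ as a function of the cycle product; once that is in hand, the casework on $c$ and the reading-off of the hyperplanes are routine.
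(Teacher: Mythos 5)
Your proof is correct and follows essentially the same route as the paper's: decompose $\rho$ along the cycles of its underlying permutation, observe that each cyclic block contributes the eigenvalue $1$ with multiplicity at most one (the paper does this via the Laplace-expansion characteristic polynomial $\pm a_1\cdots a_k-\la^k$, you via the fixed-vector/diagonalizability argument), conclude the cycle type must be $1^n$ or $2\cdot 1^{n-2}$, and then read off forms (a) and (b) and their hyperplanes. Your explicit handling of the membership constraint $(\prod_i\ze_i)^{m/p}=1$, which the paper leaves implicit, is a nice touch but not a different method.
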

\begin{proof}
We will verify that $\rho$ is a pseudoreflection if and only if it has one of the two given forms.  Once this is done, the equations of the reflecting hyperplanes are clear.

It is easy to see that (a) and (b) yield pseudoreflections. Conversely, suppose $\rho \in G(m, p, n)$ is a pseudoreflection. Let $\pi\in\fS_n$ be the permutation whose matrix has the same nonzero positions as $\rho$.  Each $k$-cycle of $\pi$ corresponds to a $k \times k$ submatrix of $\rho$ with non-zero entries $a_1, \ldots, a_k$.  Using Laplace expansion, we find the characteristic polynomial of this submatrix is
  \[ \pm a_1 \cdots a_k - \lambda^k, \]
which has distinct roots. The characteristic polynomial of $\rho$ is the product of these and is by assumption divisible by $(1 - \lambda)^{n-1}$. Hence we must have $k \leq 2$, and there may be at most one $2$-cycle of $\pi$.

If $\pi$ is a transposition, then for some $i,j\in[n]$ there are scalars such that
$$
\text{$\rho(\bfe_i)=a \bfe_j$,
$\rho(\bfe_j)= b\bfe_i$ and 
$\rho(\bfe_k)= c_k\bfe_k$ for $k\neq i,j$.}
$$
The characteristic polynomial is then
$$
(ab-\la^2) \prod_{k\neq i,j} (c_k - \la)^{n-2}.
$$
The only way for $1-\la$ to have multiplicity $n-1$ is if $b=a^{-1}$ which also forces $c_k=1$ for all $k$, so $\rho$ is of form (b). The case when $\pi$ is the identity is similar and results in form (a). This completes the proof.
\end{proof}

\subsection{Colored partitions}

It follows from the previous lemma that the intersection lattice for $G(m,p,n)$ is the same for all $p<m$.  We will denote this lattice by
$L(m,n)$.  And the intersection lattice for $G(m,m,n)$ will be written $\Lb(m,n)$.  To give a combinatorial model for these lattices we will used colored set partitions.  
Fix $m$ and let $\zeta_m = \exp(2\pi i/m)$.
For $c\in\bbN$, we will use the abbreviation
\begin{equation}
   \label{i^c} 
   i^c = \ze_m^c \bfe_i,
\end{equation}
We will sometimes refer to $i^c$ as the {\em base $i$ with color $c$}.  Let
$$
[n^m] = \{0\} \uplus \{i^c \mid i\in[n],\ 0\le c< m\}.
$$
For any scalar $k$ and subset $S\sbe[n^m]$ we let
\begin{equation}
 \label{kS}
 kS = \{ks \mid s\in S\}.
\end{equation}
For example, if $n=2$, $m=3$ and $\ze=\ze_3$ then
$$
\ze^2 \{0,1^0, 1^2, 2^1\}
=\{\ze^2 \cdot 0,\ \ze^2\cdot\bfe_1,\ \ze^2\cdot\ze^2\bfe_1,\ \ze^2\cdot\zeta\bfe_2\}
=\{0,\ \ze^2\bfe_1,\ \ze\bfe_1,\ \bfe_2\} = \{0,1^2,1^1,2^0\}.
$$

Say that  set $T$ is a {\em multiple} of set $S$  if there is $\ze\in\mu_m$ with $T=\ze S$.  Note that this is an equivalence relation.  Say that distinct sets $S_1,S_2,\ldots, S_m$ are an {\em $m$-tuple} if $S_i$ is a multiple of $S_j$ for all $i,j\in[m]$.  This implies that, for a suitable reindexing, we have $S_i = \ze_m^{i-1} S_1$ for all $i\in[m]$.  We are now able to state our two crucial colored partition definitions, the first for $G(m,p,n)$ with $p<m$ and the second for $G(m,m,n)$.
\begin{defn}
\label{csp}
A {\em colored set partition of type $(m,n)$ } is a partition of  $[n^m]$ of the form
$$
\si = S_0/S_1/S_2/\ldots/S_{km}
$$
for some $k$ which satisfies the following two conditions:
\begin{enumerate}
    \item[(i)] $0\in S_0$ and if $i^c\in S_0$ for any $c$ then $i^d\in S_0$ for all $0\le d<m$, and
    \item[(ii)] the blocks $S_{lm+1}, S_{lm+2},\ldots, S_{(l+1)m}$ form an $m$-tuple for each $0\le l<k$.
\end{enumerate}
Block $S_0$ is called the {\em zero block} and, because of (i), it must have cardinality $lm+1$ for some $l\in[k]$.
We write $\si\ptn_{m,n} [n^m]$ and let
$$
S([n^m],k) =\{\si\ptn_{m,n} [n^m] \mid \text{$\si$ has $km+1$ blocks}\}
$$
as well as
$$
S(m,n,k)=\#S([n^m],k)
$$
Finally, we will consider the poset
$$
\Pi_{m,n} =\uplus_{k\ge 0} S([n^m],k) 
$$
ordered by refinement.

\end{defn}

\begin{defn}
A {\em colored set  partition of type $\ol{(m,n)}$} is a type $(m,n)$ partition $\si$ which also satisfies
\begin{enumerate}
    \item[(iii)] $B_0\neq\{0,i^0,i^1,\ldots,i^{m-1}\}$ for any $i$.
\end{enumerate}
We similarly write $\si\ptn_{\hs{3pt}\ol{m,n}} [n^m]$ and let $\Pib_{m,n}$ be the corresponding poset. We also use the notation
$\Sb([n^m],k)$ for the set of all $\ol{(m,n)}$ partitions with $km+1$ blocks and $\Sb(m,n,k)=\#\Sb([n^m],k)$.
\end{defn}

When writing examples, we will separate the $m$-tuples of blocks by vertical bars for clarity.  To illustrate, suppose $m=n=3$.  Then
\begin{equation}
    \label{[3^3],2}
\si = 0 \mid 1^0 2^2/1^1 2^0 / 1^2 2^1 \mid 3^0 / 3^1 / 3^2\in S([3^3],2)
\end{equation}
and $\si$ is also a type $\ol{(3,3)}$ partition.  On the other hand
\begin{equation}
    \label{[4^3],2}
\si = 0 4^0  4^1  4^2 \mid 1^0 3^2/1^1 3^0 / 1^2 3^1 \mid 2^0 / 2^1 / 2^2\in S([4^3],2)
\end{equation}
is not type $\ol{(3,4)}$.
It turns out that the two posets just defined are isomorphic to the intersection lattices for the $G(m,p,n)$.
\begin{thm}
\label{Pi_m,n}
If $p<m$ then 
$$
L(G(m,p,n)) \iso \Pi_{m,n}
$$
so that $S(G(m,p,n),k) = S(m,n,k)$ for all $k\ge0$.
Also 
$$
L(G(m,m,n))\iso\Pib_{m,n}
$$
which implies $S(G(m,m,n),k) = \Sb(m,n,k)$ for all $k\ge0$.
\end{thm}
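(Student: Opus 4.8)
The plan is to exhibit an explicit order isomorphism between the intersection lattice and the poset of colored set partitions by reading the combinatorial data directly off each flat. Given a flat $V\in L(G(m,p,n))$, I would record two intrinsic pieces of data. First, the \emph{zero set} $Z=\{i\in[n] : X_i|_V\equiv 0\}$ of coordinates vanishing identically on $V$; by Lemma~\ref{H}(a) these are exactly the directions cut out by the hyperplanes $X_i=0$, which are available precisely when $p<m$. Second, on the remaining coordinates $[n]\sm Z$, declare $i\sim j$ whenever $V\sbe\{\ze X_i=X_j\}$ for some $\ze\in\mu_m$; since $X_i$ does not vanish on $V$, this $\ze$ is unique, so $\sim$ is a well-defined equivalence relation whose classes carry consistent root-of-unity ratios recorded by Lemma~\ref{H}(b).

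Next I translate this data into a colored partition. The zero set becomes the zero block $S_0=\{0\}\uplus\{i^c : i\in Z,\ 0\le c<m\}$, which automatically satisfies condition~(i). Each $\sim$-class $\{i_1,\dots,i_s\}$, on which $V$ obeys $\ze_m^{-c_1}X_{i_1}=\cdots=\ze_m^{-c_s}X_{i_s}$ for a suitable exponent vector, becomes the block $B=\{i_1^{c_1},\dots,i_s^{c_s}\}$ together with its orbit $B,\ze_m B,\dots,\ze_m^{m-1}B$ under the color-shift action~\eqref{kS}; because the bases in $B$ are distinct, this orbit has exactly $m$ members and forms an $m$-tuple, giving condition~(ii). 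The key point is that all $m$ blocks in one orbit encode the \emph{same} linear constraint on $V$ up to an overall scalar, which is exactly why one linking class should produce an $m$-tuple rather than a single block. The inverse map sends a colored partition to the intersection of the hyperplanes $X_i=0$ (for $i$ in the zero block) and the type~(b) hyperplanes read off from any block of each $m$-tuple. I would check these constructions are mutually inverse by direct computation and that each is monotone: coarsening the partition merges linking classes and so adds hyperplane constraints, shrinking the flat, and under reverse inclusion a smaller flat is larger in the lattice order, matching the refinement order on $\Pi_{m,n}$. The rank bookkeeping then drops out, since a flat of codimension $n-k$ has exactly $k$ nonzero $m$-tuples, i.e. $km+1$ blocks, yielding $S(G(m,p,n),k)=S(m,n,k)$.

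For $G(m,m,n)$ the same maps are used, but Lemma~\ref{H} now removes the hyperplanes $X_i=0$, so I must determine which zero blocks remain attainable. A coordinate can only be forced to vanish \emph{indirectly}: two distinct hyperplanes $\ze X_i=X_j$ and $\ze' X_i=X_j$ intersect in $\{X_i=X_j=0\}$, so zeros are always produced in classes of size at least two. Hence the zero block can contain $0$ bases or at least two bases, but never a single base $\{0,i^0,\dots,i^{m-1}\}$, which is precisely condition~(iii). Proving both directions then gives $L(G(m,m,n))\iso\Pib_{m,n}$ and $S(G(m,m,n),k)=\Sb(m,n,k)$.

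I expect the main obstacle to be verifying that the two maps are genuinely inverse and order-preserving at the level of the root-of-unity linking data: one must confirm that the unique ratios attached to a $\sim$-class are globally consistent, so that the block $B$ is well defined and its color-shift orbit is recovered faithfully; and, separately for $G(m,m,n)$, the careful analysis showing that indirect vanishing always forces at least two coordinates to zero, the step that isolates condition~(iii).
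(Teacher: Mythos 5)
Your proposal is correct and takes essentially the same approach as the paper: both establish the isomorphism via the same explicit correspondence (vanishing coordinates $\leftrightarrow$ zero block, root-of-unity linking relations $\leftrightarrow$ $m$-tuples of blocks), check order-preservation and the codimension/block count, and handle $G(m,m,n)$ by observing that vanishing can only be forced indirectly by inconsistent ratios, which is exactly what isolates condition (iii). The only cosmetic difference is direction: the paper defines the map from partitions to flats and then inverts it by collecting all hyperplanes containing a flat, while you start from the flat side with the zero set and linking equivalence, but this is the same bijection and the same verification.
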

\begin{proof}
Let $\ze=\ze_m$.
For the first statement, we will define a map $f:\Pi_{m,n}\ra L(G(m,p,n)) $ as follows.  Suppose $\si\ptn_{m,n} [n^m]$.  With each $i^j$ in the zero block associate the hyperplane $X_i=0$.  And for any pair of elements $i^c,j^d$ in a nonzero block associate the hyperplane $\ze^d X_i = \ze^c X_j$.  By \Cref{H}, these hyperplanes are all in $L(G(m,p,n))$.  Letting $f(\si)$ be the intersection of its associated hyperplanes thus gives an element of the intersection lattice.  It is easy to see that $f$ is order preserving.

To show that $f$ is an isomorphism, we construct its inverse.  Given a subspace $W$ in $L(G(m,p,n))$, let $\cA$ be the arrangement of all hyperplanes $H$ of $G(m,p,n)$ containing $W$.  We form $\si=f^{-1}(W)$ as follows.
For each hyperplane in $\cA$, if $H$ is of the form $X_i=0$ then put the elements $i^c$ in the zero block of $\si$ for all $0\le c<m$.  If $H$ has equation 
$\ze^c X_i = X_j$ then put $i^d$ in the same block with $j^{d+c}$ 
for each $0\le d<m$.  This is a type $(m,n)$ partition because the coordinate hyperplanes make sure that condition (i) is satisfied and the others guarantee condition (ii).  Again, the fact that this map is order preserving is clear.  Finally, $f$ and $f^{-1}$ are actual inverses of each other since the steps taken to do one are reversed to do the other.

The corresponding statement about the Stirling numbers is now immediate.

The construction of an isomorphism $\fb:\Pib_{m,n}\ra L(G(m,m,n))$ is similar with the following modifications. For $\fb$, if the block $S_0$ contains any colored elements it must contain at least two with different bases, say $i^c$ and $j^d$ with $i\neq j$.  For every such pair, put the hyperplane $\ze^d X_i = \ze^c X_j$ in the arrangement of associated hyperplanes.  Since all colors of $i$ and $j$ are in $S_0$, their intersection will be contained in the hyperplanes $X_i=0$ and $X_j=0$ even though these are not elements of $L(G(m,m,n))$.  Similar considerations take care of the inverse map.
\end{proof}

We can use the description of these lattices in terms of colored partitions to obtain formulas for $S(G,k)$ when $G=G(m,p,n)$ in terms of complete homogeoneous symmetric polynomials.  We will need the following well-known recursion for the $h_k(n) = h_k(x_1, \ldots, x_n)$:
\begin{equation}
    \label{h:rec}
    h_k(n) = h_k(n-1) + x_n h_{k-1}(n)
\end{equation}
for $n\ge1$, with boundary condition $h_k(0)=\de_{k,0}$ (Kronecker delta).
\begin{thm}\label{thm:S=h}
We have
\begin{equation}
   \label{S=h}
S(m,n,k) = h_{n-k}(1,m+1,2m+1,\ldots,km+1),
\end{equation}
and
\begin{equation}
    \label{S=h-nh}
    \Sb(m,n,k) = h_{n-k}(1,m+1,2m+1,\ldots,km+1) - n h_{n-k-1}(m,2m,\ldots,km).
\end{equation}

\end{thm}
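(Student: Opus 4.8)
The plan is to prove both identities by induction on $n$, in each case setting up a combinatorial recursion and matching it term-by-term against the symmetric-function recursion \eqref{h:rec}.

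For \eqref{S=h} I would establish the recursion
\[
S(m,n,k) = S(m,n-1,k-1) + (km+1)\, S(m,n-1,k)\qquad(n\ge 1),
\]
with base cases $S(m,0,0)=1$ and $S(m,0,k)=0$ otherwise. To prove it, I classify each $\si\in S([n^m],k)$ by the behaviour of the \emph{base $n$}, that is, the colored elements $n^0,\ldots,n^{m-1}$. Call base $n$ \emph{isolated} in $\si$ if the singletons $\{n^0\},\ldots,\{n^{m-1}\}$ form one of the $m$-tuples of $\si$. The crucial observation is that if any nonzero block of $\si$ consists only of colors of base $n$, then its entire $m$-tuple does (multiplying by $\ze=\ze_m$ permutes colors but preserves the base), and an $m$-tuple of base-$n$ blocks splits the $m$ elements $n^0,\ldots,n^{m-1}$ into $m$ nonempty blocks, hence into singletons; so base $n$ is isolated. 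Consequently, deleting all colors of base $n$ from $\si$ either removes a full $m$-tuple (exactly when base $n$ is isolated), yielding a bijection between the isolated partitions and $S([(n-1)^m],k-1)$ and the first term, or removes no block at all (otherwise), yielding a valid element $\tau\in S([(n-1)^m],k)$. In the latter case I count preimages: base $n$ is reinserted by choosing any one of the $km+1$ blocks of $\tau$ to contain $n^0$; if that block is the zero block, condition (i) forces all of $n^0,\ldots,n^{m-1}$ into it, and if it is a nonzero block, the $m$-tuple structure forces the remaining colors around the $m$-tuple. Since every block of $\tau$ already contains a color of an earlier base, each choice gives a distinct valid partition in which base $n$ is not isolated, so there are exactly $km+1$ preimages, giving the second term.

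With the recursion in hand, \eqref{S=h} follows by induction: writing $x_j=(j-1)m+1$, the right-hand side $h_{n-k}(x_1,\ldots,x_{k+1})$ satisfies, by \eqref{h:rec} applied to its last variable $x_{k+1}=km+1$,
\[
h_{n-k}(x_1,\ldots,x_{k+1}) = h_{n-k}(x_1,\ldots,x_k) + (km+1)\, h_{n-k-1}(x_1,\ldots,x_{k+1}),
\]
whose two summands are precisely $S(m,n-1,k-1)$ and $(km+1)S(m,n-1,k)$ under the inductive hypothesis; the base cases match since $h_n(1)=1$ and $h_d$ vanishes for $d<0$.

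For \eqref{S=h-nh} I would write $\Sb(m,n,k)=S(m,n,k)-B(m,n,k)$, where $B(m,n,k)$ counts the type $(m,n)$ partitions with $km+1$ blocks that \emph{violate} condition (iii), i.e.\ whose zero block is exactly $\{0,i^0,\ldots,i^{m-1}\}$ for a single base $i$. Since $\si$ determines the offending base $i$ uniquely, there is no overcounting, and deleting the zero block gives a bijection between such partitions (for fixed $i$) and the partitions of the colored elements of the remaining $n-1$ bases into $k$ $m$-tuples with no zero block. Writing $T(m,N,k)$ for the number of ways to partition the colored elements of $N$ bases into $k$ $m$-tuples, this gives $B(m,n,k)=n\,T(m,n-1,k)$. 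The same deletion/insertion analysis as above—now with no zero block available, so base $N$ can only be isolated or inserted into one of the $k$ $m$-tuples in $m$ ways each—yields
\[
T(m,N,k) = T(m,N-1,k-1) + km\, T(m,N-1,k),
\]
which is exactly \eqref{h:rec} for $h_{N-k}(m,2m,\ldots,km)$ in its last variable $km$. Hence $T(m,N,k)=h_{N-k}(m,2m,\ldots,km)$, so $B(m,n,k)=n\,h_{n-k-1}(m,2m,\ldots,km)$, completing \eqref{S=h-nh}. The main obstacle is the bookkeeping in this deletion/insertion bijection, and in particular the two points that make the coefficients come out right: that a base monopolizing an $m$-tuple must occupy it as $m$ singletons (so only the isolated case changes the block count), and that reinserting a non-isolated base corresponds bijectively to a choice among the $km+1$ blocks (respectively $km$ for $T$). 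Once these counts are pinned down, the remaining inductions are routine.
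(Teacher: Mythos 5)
Your proof is correct and follows essentially the same route as the paper's: delete the colors of base $n$, split into the isolated/non-isolated cases to get the recursion $S(m,n,k)=S(m,n-1,k-1)+(km+1)S(m,n-1,k)$ matching \eqref{h:rec}, and for \eqref{S=h-nh} subtract the partitions whose zero block is $\{0,i^0,\ldots,i^{m-1}\}$, counted as $n$ times a quantity satisfying the recursion of $h_{n-k-1}(m,2m,\ldots,km)$. In fact you supply the details (the isolation argument and the recursion $T(m,N,k)=T(m,N-1,k-1)+km\,T(m,N-1,k)$) that the paper explicitly leaves to the reader.
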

\begin{proof}
For the first equality, it suffices to show that $S(m,n,k)$ satisfies the same initial conditions and recursion as the homogeneous symmetric function on the right side of~\eqref{S=h}.  Proving the former is simple, so we concentrate on the latter.  If 
$\si\in S([n^m],k)$ then let $\si'$ be the result of removing all elements of the form $n^c$ from $\si$.  There are two cases.

First suppose $n^0$  is in  a block of size one.  This implies that the same is true of $n^c$ for all $c$.  So  $\si'\in S([(n-1)^m],k-1)$. And in this case the map $\si\mapsto\si'$ is a bijection so there are $S(m,n-1,k-1)$ possible $\si$.

The other possibility is that $n^0$ is in a block of size at least two.  Now eliminating the $n^c$ leaves the same number of blocks so that $\si'\in S([(n-1)^m],k)$ for which there are $S(m,n-1,k)$ choices.  We also need to count the number of $\si$ which will result in a given $\si'$.  We can reinsert $n^0$ into any of the $km+1$ blocks of $\si'$. And once $n^0$ is placed, the two conditions for a colored partition force the blocks for the other $n^c$.  So the total number of $\si$ in this case is $(km+1)S(m,n-1,k)$.

From these two cases we get the recursion
\begin{equation}
    \label{S(m,n,k):rr}
S(m,n,k) = S(m,n-1,k-1) + (km+1) S(m,n-1,k).
\end{equation}
It is now an easy matter of reindexing to show that this agrees with equation~\eqref{h:rec} applied to $h_{n-k}(1,m+1,2m+1,\ldots,km+1)$.

Now consider~\eqref{S=h-nh}.
By definition, the set $\Sb([n^m],k)$ is just 
$S([n^m],k)$ with all $\si$ having a zero block $S_0$ of size $m+1$ removed.  So, by appealing to the first case, it suffices to show that the number of removed $\si$ is 
$n h_{n-k-1}(m,2m,\ldots,km)$.

The $m$ nonzero elements in $S_0$ are all of the form $i^c$ for some $i\in[n]$ and all values of $c$.  So there are $n$ choices for $i$.  There remains to show that the $h_{n-k-1}(m,2m,\ldots,km)$ counts the number of possibilities for the rest of the blocks.  This argument is much the same as the one given for $S(m,n,k)$ and so is left to the reader.
\end{proof}

To make the connection of the previous result with \Cref{OS} even stronger we note that, for $p<m$, the coexponents of $G(m,p,n)$ are 
\begin{equation}
 \label{Gexp}  
e_k^*=(k-1)m+1 \text{ for } k\in[n].
\end{equation}
See \cite[Table 1]{SW:hdf} for the general case.

\subsection{Colored permutations}
\label{cp}

We will now show how the individual M\"obius values in the lattices for $G(m,p,n)$, not just their rank sums, can be given a combinatorial interpretation in terms of colored permutations.  This generalizes a well-known result in type 
$A$~\cite[equation (5.9)]{Sag:aoc} as well as a recent result of Sagan and Swanson in type $B$~\cite{SS:qStB}.

Let $\fS_{[n^m]}$ be the symmetric group of permutations of $[n^m]$.
Recalling our convention~\eqref{i^c}, we see that $G(m,p,n)$ acts on $[n^m]$ by left multiplication, where we interpret $0$ as the zero vector.  Any $g\in G(m,p,n)$ thus has a corresponding $\pi_g\in\fS_{[n^m]}$, which can be decomposed into cycles.  For example, if
$$
g = \left[\barr{ccc}
0&1&0\\
i&0&0\\
0&0&1
\earr\right]\in G(4,1,3)
$$
then $g \bfe_1 = i\bfe_2$ and so $\pi_g(1^0)=2^1$.  The full cycle decomposition is
$$
\pi_g = (0)(1^0,2^1,1^1,2^2,1^2,2^3,1^3,2^0)(3^0)(3^1)(3^2)(3^3).
$$

We multiply scalars times cycles and linear sequences the same way that we do sets in~\eqref{kS}.  To illustrate, if we take the cycle $\ga=(1^0,3^2,2^3)$ in $\fS_{[3^4]}$
then
$$
i \ga = ( i\cdot\bfe_1,\ i\cdot i^2\bfe_3,\ i\cdot i^3\bfe_2)
=(1^1,3^3,2^0).
$$
If $\ga$ is a cycle in $\pi_g$ for some $g\in G(m,p,n)$ we now show that $\ga$ is of one of two types.

Suppose first that $\ga$ contains $i^c$ for some $i,c$ but no other colorings of $i$.  Then the fact that  $g$ respects scalar multiplication implies that every base $j$ in $\ga$ only occurs once.  We will call sequences of elements of $[n^m]$ with this property {\em primitive}.
Furthermore $\ze c$ is a cycle of $\pi_g$ for all $\ze\in\mu_m$.  As with colored partitions, these $m$ cycles are said to form an {\em $m$-tuple}.

The other possibility is that $\ga$ contains $i^c$ and $i^d$ for some $i$ and $c\neq d$.  In this case, $\ga$ has the form
$$
\ga = (\de,\xi\de,\xi^2\de,\ldots,\xi^{k-1}\de)
$$
where $\de$ is a primitive linear sequence of elements of $[m^n]$ and $\xi$ is a primitive $k$th root of unity for some $k>1$. We call such cycles {\em zero cycles}. 
Call a zero cycle $\ga$ {\em full} if it has the given form where $\xi=\ze_m$.
We also assume that  $(0)$, which is always a cycle of $\pi_g$, is full.
Call $\pi_g$ {\em full} if all its zero cycles are full.

We wish to relate colored permutations to colored partitions.
The {\em underlying partition} of $\pi_g$ for $g\in G(m,p,n)$ is the colored partition $\si$ obtained by replacing each of its primitive cycles by the set of elements it contains, and putting the union of the sets of all the zero cycles in the zero block.  By the previous two paragraphs, the result is indeed a colored set partition.  For example, if $g\in G(4,1,3)$ has
$$
\pi_g = (0)(1^0,1^2)(1^1,1^3)(2^0,3^2)(2^1,3^3)(2^2,3^0)(2^3,3^1)
$$
then the underlying partition is
$$
\si= 0 1^0 1^1 1^2 1^3 \mid 2^0 3^2 / 2^1 3^3 / 2^2 3^0 /2^3 3^1.
$$
In order to state the next result in a compact manner, we will use the notation
\begin{equation}
    \label{l!_m}
l!_m = l(l-m)(l-2m)\cdots r
\end{equation}
where $r \in [m]$ is the remainder of $l$ on division by $m$.
Note that this definition implies $l!_m=1$ if $l<0$ since the product is empty.
\begin{thm}
Suppose $\si=S_0/S_1/\ldots/S_{km}$ is a colored partition.
Let $b=\#S_0$ and $b_j=\# S_{jm}$ for $j\in[k]$.
If $\si\in\Pi_{m,n}$ then
\begin{align*}
  \mu(\si) &= (-1)^{n-k}  (b-m)!_m \prod_{j=1}^k (b_j - 1)!\\
  &=(-1)^{n-k}\#\{g\in G(m,1,n) \mid \text{$\pi_g$ is full and has underlying partition $\si$}\}.
\end{align*}
If $\si\in\Pib_{m,n}$ then
\begin{align*}
  \mu(\si) &= (-1)^{n-k}  (b-m-n) (b-2m)!_m \prod_{j=1}^k (b_j - 1)!\\
  &=(-1)^{n-k}\#\{g\in G(m,m,n) \mid \text{$\pi_g$ is full and has underlying partition $\si$}\}.
\end{align*}
\end{thm}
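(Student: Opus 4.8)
The statement bundles two separate assertions: a closed product formula for $\mu(\si)$ and an interpretation of that formula as a signed count of full colored permutations. The plan is to prove the displayed equalities by reducing everything to a product over the blocks of $\si$ and then treating the zero block and the nonzero $m$-tuples separately, handling the Möbius values and the permutation counts in parallel.

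For the Möbius values I would use the standard fact that in an intersection lattice the lower interval $[\zh,\si]$ is isomorphic to the intersection lattice of the subarrangement of all hyperplanes containing the flat corresponding to $\si$. Under the lattice isomorphisms $f$ and $\fb$ constructed in the preceding theorem, this subarrangement splits, on disjoint sets of coordinates, into the hyperplanes coming from the zero block and those coming from each nonzero $m$-tuple; hence $[\zh,\si]$ is a direct product and $\mu$ is multiplicative across the factors. The factor attached to the $j$th tuple is a braid (type $A$) lattice on $b_j$ coordinates, contributing $(-1)^{b_j-1}(b_j-1)!$. The factor attached to the zero block is the full intersection lattice of $G(m,1,l)$ in the type $(m,n)$ case, where $l=(b-1)/m$ is the number of bases in $S_0$; reading its top Möbius value off the characteristic polynomial $\prod_{i=0}^{l-1}\bigl(t-(im+1)\bigr)$ — which one may justify through \Cref{OS} and the coexponents \eqref{Gexp}, or via a finite-field point count — gives $(-1)^{l}(b-m)!_m$. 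Multiplying the factors and using $l+\sum_j b_j=n$, so that $(-1)^{l}(-1)^{\sum_j(b_j-1)}=(-1)^{n-k}$, produces the global sign and the stated product.

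For the combinatorial interpretation I would enumerate the full $g$ with a prescribed underlying partition one block at a time. A nonzero $m$-tuple is reconstructed by choosing a single primitive cycle on its representative block $S_{jm}$ and applying $\ze_m^{\,i}$ to obtain the other $m-1$ cycles; there are $(b_j-1)!$ such cycles, matching the braid factor. For the zero block I would count the ways to organize its $l$ bases into full zero cycles: there are $(t-1)!\,m^{t-1}$ full zero cycles on any fixed set of $t$ bases, and an exponential-formula (or direct recursion) computation shows the total number of full decompositions is $\prod_{i=0}^{l-1}(im+1)=(b-m)!_m$, again matching the Möbius factor. Comparison with the first equality then yields $\mu(\si)=(-1)^{n-k}\#\{g\in G(m,1,n)\,:\,\dots\}$ in the type $(m,n)$ case.

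The main obstacle is the type $\ol{(m,n)}$ case. Here the zero-block factor $(b-m-n)(b-2m)!_m$ carries an explicit dependence on the global rank $n$, exactly as the correction term $-\,n\,h_{n-k-1}(m,2m,\ldots,km)$ does in \eqref{S=h-nh}. This reflects that the defining condition $(\ze_1\cdots\ze_n)=1$ of $G(m,m,n)$ is global and is not respected by the block decomposition, so neither the interval $[\zh,\si]$ nor the family of admissible $g$ factors as cleanly as before; indeed the construction of $\fb$ treats the zero block specially, and this is where the $n$-dependence enters. On the lattice side I would still extract the factor from the characteristic polynomial of the $G(m,m,l)$ arrangement, while on the enumerative side I would track the product $\prod_i\ze_i$ through the cycle structure of a full $g$ and isolate, via a roots-of-unity filter or a sign-reversing involution, the subfamily meeting the determinant constraint. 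I expect that reconciling these two $n$-dependent expressions — in particular pinning down the correct global weight attached to the zero block — will be the crux of the proof.
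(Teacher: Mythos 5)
Your treatment of the $\Pi_{m,n}$ case is correct and is essentially the paper's own argument: the paper also factors $[\zh,\si]\iso\Pi_{m,(b-1)/m}\times\Pi_{b_1}\times\cdots\times\Pi_{b_k}$, invokes multiplicativity of $\mu$ together with \Cref{OS} and the coexponents~\eqref{Gexp}, and then matches this against a block-by-block enumeration of full elements. The only real difference is in one sub-step: for the number of decompositions of the zero block into full zero cycles, the paper runs an insertion induction on the largest base, obtaining $(1+b-m-1)(b-2m)!_m=(b-m)!_m$, whereas you count $(t-1)!\,m^{t-1}$ full zero cycles on $t$ fixed bases and apply the exponential formula to get $\prod_{i=0}^{l-1}(im+1)$ directly. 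Both are valid; your route trades the paper's recursion for a closed-form generating function computation.

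The genuine gap is the $\Pib_{m,n}$ case, which you only sketch --- and you should know that the sketch cannot be completed as written, because the obstacle you flag is real and is fatal to the statement itself, not merely to a proof strategy. On the lattice side your worry is misplaced: the interval \emph{does} factor, $[\zh,\si]\iso\Pib_{m,l}\times\Pi_{b_1}\times\cdots\times\Pi_{b_k}$ where $l=(b-1)/m$ is the number of bases in $S_0$, since condition (iii) constrains only the zero block; applying \Cref{OS} to $G(m,m,l)$ then gives the zero-block factor $(b-m-l)(b-2m)!_m$ with the \emph{local} $l$, not the global $n$. Consequently the first displayed equality as printed already fails whenever $\si$ has nonzero blocks: for $\si=0\,1^01^12^02^1\mid 3^0/3^1\in\Pib_{2,3}$ one computes $\mu(\si)=1$ directly, while the printed formula gives $(-1)^{2}(5-2-3)(5-4)!_2\,0!=0$.

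On the enumerative side, your instinct to track $\prod_i\ze_i$ through the cycle structure is exactly right, and carrying it out disproves the second equality for $m\ge3$. Telescoping the colors shows that every full zero cycle contributes a factor $\ze_m$ to the product of the nonzero entries of $g$, while each primitive $m$-tuple contributes $1$; hence a full $\pi_g$ lies in $G(m,m,n)$ if and only if its number of zero cycles is divisible by $m$. For the top element of $\Pib_{3,2}$ that number is $1$ or $2$, so there are \emph{no} full elements of $G(3,3,2)$ with that underlying partition, yet $\mu=2$. (Your roots-of-unity filter gives the zero-block count $\frac1m\sum_{\om^m=1}\prod_{i=0}^{l-1}(\om+im)$, which agrees with the M\"obius factor $(l-1)(m-1)\prod_{i=0}^{l-2}(im+1)$ only for $m\le2$; this is precisely why Zaslavsky's permutation interpretation stops at type $D$.) The paper's own proof of this case is a two-sentence sketch which replaces the determinant condition by ``no cycle of the form $(p^0,p^1,\ldots,p^{m-1})$''; these are not equivalent, and that count gives $3$, not $2$, in the same example. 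So the correct conclusion of your analysis is not that the two $n$-dependent expressions must be reconciled, but that the $\Pib$ statement must be repaired: the M\"obius formula holds with $l$ in place of $n$ and is provable by the factorization method you and the paper share, while the claimed equality with counts of full elements of $G(m,m,n)$ is false in general.
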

\begin{proof}
We start by considering the case when $\si\in\Pi_{m,n}$.  To prove the first equality, note that when constructing an element below $\si$ in $\Pi_{m,n}$ each of the blocks $S_{jm}$ for $0\le j\le m$ can be refined independently.  And once refinements for these blocks have been chosen, the refinements of the other blocks are forced.    The possible refinements for an $S_{jm}$ with $j\ge1$ give a poset isomorphic to the ordinary partition lattice $\Pi_{b_j}$.  Furthermore, the poset of possible refinements for $S_0$ is isomorphic to $\Pi_{m,(b-1)/m}$.  It follows that we have an isomorphism
$$
[\zh,\si] \iso \Pi_{m,(b-1)/m}\times\Pi_{b_1}\times \Pi_{b_2} \times \cdots\times \Pi_{b_k}
$$
where $[\zh,\si]$ is the closed interval between $\zh$ and $\si$.  If $P$ is a poset with a $\zh$ and a $\oh$ (unique maximal element) we write $\mu(P)=\mu(\oh)$.  Using the previous displayed equation, the fact that $\mu$ is multiplicative, \Cref{OS}, and~\eqref{Gexp} gives
\begin{align*}
 \mu(\si) &=   \mu(\Pi_{m,(b-1)/m}) \prod_{j=1}^k \mu(\Pi_{b_j})\\ 
& = (-1)^{(b-1)/m} (b-m)!_m \prod_{j=1}^k (-1)^{b_j-1} (b_j-1)!\\
\end{align*}
which is the desired expression since $(b-1)/m+\sum_{j} b_j = n$. (Incidentally, we may avoid dependence on \Cref{OS} by instead inducting on $n$.)

For the second equality, consider the number of ways of turning $\si$ into a full $\pi_g$ having it as underlying partition.  Once cycles have been put on the $S_{jm}$ for $j\ge1$, the cycles on the other nonzero blocks are forced.  And it is well-known that the number of ways to make a cycle out of $b_j$ elements is $(b_j-1)!$.  So it suffices to show that the number of ways to decompose $S_0$ into full zero cycles is $(b-m)!_m$.  Let $b=lm+1$.  We will prove the claim by induction on $l$, where we skip the base case.  Let $\rho$ be a product of full zero cycles with underlying partition $S_0$, and let $p$ be the maximum base in $\rho$.  Consider $\rho'$, the result of removing all copies of $p$ from $\rho$.   By  induction, the number of possible $\rho'$ is $(b-2m)!_m$.  To see how many $\rho$ give rise to a given $\rho'$ there are two cases.  One is that all the copies of $p$ were in a zero cycle with no other bases.  Then the full condition implies there is only one such cycle, namely $(p^0,p^1,\ldots,p^{m-1})$.  In the second case, the copies of $p$ are in a cycle of $\rho$ with other bases.  In this case, one can insert $p^0$ after any of the $b-m-1$ elements of cycles of $\rho'$ other than the cycle $(0)$.  Then the placement of the other copies of $p$ are determined by the form of zero cycles.  So the total number of possible $\rho$ is
$$
(1+b-m-1)(b-2m)!_m = (b-m)!_m
$$
as desired.

The proof $\si\in\Pib_{m,n}$ is very similar to the one just given.  In proving the second equality, the factor $b-m-n$ in the product which replaces $b-m$ from the previous case comes from removing the possible full cycle decompositions which have a cycle of the form $(p^0,p^1,\ldots,p^{m-1})$ for $p\in[n]$ which is not permitted in this lattice just as we did in proving equation~\eqref{S=h-nh}.  So we have left the details to the reader.
\end{proof}

Using the previous result we may  extend the part of Theorem~\ref{thm:Ww-Ss} 
interpreting the Stirling numbers of the first kind in terms of permutations.
to all $G(m, p, n)$. Let
\begin{align*}
  c(m, n, k)
    &= \{g \in G(m, 1, n) : \text{$\pi_g$ is full with $k$ $m$-tuples of cycles}\}
\end{align*}
and let
\begin{align*}
  \overline{c}(m, n, k)
    &= \{g \in G(m, m, n) : \text{$\pi_g$ is full with $k$ $m$-tuples of cycles}\}.
\end{align*}

\begin{cor}
  We have

 \eqqed{s(G(m, p, n), k) = (-1)^{n-k} \cdot
       \begin{cases}
          \#c(m, n, k) & \text{if $p < m$,} \\
          \#\overline{c}(m, n, k) & \text{if $p=m$}.
       \end{cases} 
}
\end{cor}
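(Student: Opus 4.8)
The plan is to unwind the definition of $s(G(m,p,n),k)$ as a Whitney number of the first kind and then sum, over all colored partitions of the relevant rank, the M\"obius interpretation supplied by the preceding theorem. Concretely, by~\eqref{s(G,k)} we have $s(G(m,p,n),k) = w(L(\cA_{G(m,p,n)}), n-k) = \sum_{\rk x = n-k} \mu(x)$, the sum ranging over the rank-$(n-k)$ elements of the intersection lattice. The first step is to transport this sum to the combinatorial model via \Cref{Pi_m,n}. Since that isomorphism matches $S(G(m,p,n),k)=W(L(\cA_{G(m,p,n)}),n-k)$ with the count of colored partitions having $km+1$ blocks, it identifies rank-$(n-k)$ subspaces with the elements of $S([n^m],k)$ when $p<m$, and with $\Sb([n^m],k)$ when $p=m$. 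Hence the sum becomes $\sum_{\si \in S([n^m],k)} \mu(\si)$ (respectively over $\Sb([n^m],k)$).

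Next I would substitute the M\"obius formula from the preceding theorem. For each such $\si$ it gives $\mu(\si) = (-1)^{n-k} N(\si)$, where $N(\si)$ denotes the number of $g \in G(m,1,n)$ with $\pi_g$ full and underlying partition $\si$. Crucially, the sign $(-1)^{n-k}$ is constant across all $\si$ with $km+1$ blocks, so it factors out of the sum, leaving $(-1)^{n-k} \sum_{\si} N(\si)$, a sum of counts indexed by the underlying partition.

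The final step is the bookkeeping identification $\sum_{\si \in S([n^m],k)} N(\si) = \#c(m,n,k)$. This holds because every full $g \in G(m,1,n)$ has a well-defined underlying partition, so the fibers $\{g : \text{underlying partition} = \si\}$ partition the set of all full $g$; and the underlying partition lies in $S([n^m],k)$ precisely when $\pi_g$ has $k$ $m$-tuples of primitive cycles. Indeed, each primitive cycle contributes a single nonzero block, so an $m$-tuple of primitive cycles yields an $m$-tuple of nonzero blocks, while all zero cycles collapse into $S_0$; thus $k$ $m$-tuples of cycles correspond exactly to $km+1$ blocks. Summing $N(\si)$ over all such $\si$ therefore partitions $c(m,n,k)$ by underlying partition and recovers its cardinality, giving $s(G(m,1,n),k) = (-1)^{n-k}\#c(m,n,k)$.

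It remains to pass from $p=1$ to general $p<m$, and to dispatch $p=m$. For the former, the intersection lattice is independent of $p$ as long as $p<m$, so $s(G(m,p,n),k) = s(G(m,1,n),k)$ and the $p<m$ case is immediate; the $p=m$ case is proved identically using the $\Pib_{m,n}$ half of \Cref{Pi_m,n}, the $\Sb([n^m],k)$ statement, and the $\overline{c}(m,n,k)$ count. I expect the only genuinely delicate point — which I would verify carefully rather than leave implicit — to be this last identification of ``$k$ $m$-tuples of cycles'' with ``$km+1$ blocks,'' together with the claim that the counts $N(\si)$ are disjoint and exhaust $c(m,n,k)$; everything else is formal manipulation of the sum and an appeal to the established lattice isomorphism and M\"obius formula.
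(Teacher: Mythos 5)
Your proof is correct and follows exactly the route the paper intends: the paper treats this corollary as an immediate consequence of the preceding M\"obius-value theorem, combined with the definition of $s(G,k)$ as a Whitney number of the first kind and the lattice isomorphisms of \Cref{Pi_m,n}, which is precisely the chain of identifications (sign factoring out, fibers over underlying partitions, $p$-independence of the lattice for $p<m$) that you spell out. The only difference is that the paper leaves all of this implicit, whereas you verify the bookkeeping explicitly.
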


\section{Inversions and a \texorpdfstring{$q$}{q}-analogue}
\label{iqa}

We now introduce a $q$-analogue of the Stirling numbers of the second kind for $G(m,p,n)$ using an inversion-like statistic on colored set partition.  In type $A$, such $q$-analogues and statistics have a substantial history which the reader will find in~\cite{SS:qStB}.  For type $B$, they have only been studied in a few  papers~\cite{ars:cim,BGK:qrS,KAAM:cqm,SS:qStB,SW:hdf,ZD:sii}.

We will need to write our partitions in a standard form.  Define the {\em minimum base} of a nonempty subset $S\sbe[n^m]$ to be
$$
\minb S = \min\{i \mid \text{$i^c \in S$ for some color $c$}\}
$$
if $0\not\in S$, or $\minb S = 0$ if $0\in S$. 
If $\si=S_0/\ldots/S_{km}\in S([n^m],k)$ then we let
$$
s_i = \minb S_i
$$
for $0\le i \le km$.
For example, in the partition in~\eqref{[3^3],2} we have
$s_0 = 0$, $s_1=s_2=s_3=1$, and $s_4=s_5=s_6=3$.
Since $\minb S$ is an integer, it can be compared to other integers in the usual order on $\bbZ$.  Also note that if $S_1,\ldots,S_m$ is an $m$-tuple then 
$\minb S_1 = \ldots = \minb S_m$.  
 Say that $\si$ 
is in {\em standard form}  if its blocks are listed in such a way that
\begin{enumerate}
    \item $0=s_0<s_m<s_{2m}<\ldots<s_{km}$, and
    \item for $j\in[km]$ we have $s_j^r\in S_j$ where $r$ is the remainder of $j$ on division by $m$.
\end{enumerate}
It is easy to see that these two conditions imply that the standard form of $\si$ exists and is unique.  To illustrate, the standard form of the partition in~\eqref{[4^3],2} is
\begin{equation}
    \label{std}
\si = 0 4^0  4^1  4^2 \mid 1^1 3^0/1^2 3^1 / 1^0 3^2 \mid 2^1 / 2^2 / 2^0.
\end{equation}

The statistic we will use on colored partitions is as follows.  An {\em inversion} of $\si=S_0/\ldots/S_{km}$ in standard form is a pair $(i^0,S_l)$ such that
\begin{enumerate}
    \item $i^0\in S_j$ where $j<l$,
    \item $i\ge s_l$.
\end{enumerate}
Note that the first entry of an inversion pair must have color zero.  Also, because of the standard form, it is not possible to have 
$i= s_l$  although equality will become possible when we define $q$-analogues of ordered partitions.  We let $\Inv \si$ be the set of inversions of $\si$ and $\inv\si=\#\Inv\si$.
Using the partition above as an example, we have
$$
\Inv\si = \{(4^0,S_j) \mid 1\le j\le 6\} \uplus \{(3^0,S_j) \mid 2\le j\le 6\}
$$
so $\inv\si=11$.

Define the {\em $q$-Stirling numbers of the second kind} $S[m,n,k]$ to be
$$
S[m,n,k]= \sum_{\si\in S([n^m],k)} q^{\inv\si}
$$
and
$$
\Sb[m,n,k]= \sum_{\si\in \Sb([n^m],k)} q^{\inv\si}.
$$
The polynomials $S[m,n,k]$ satisfy a $q$-analogue of the recursion~\eqref{S(m,n,k):rr} for the $S(m,n,k)$.  We will need the standard $q$-analogue of $k\in\bbZ$ which is
$$
[k]_q = 1 + q + q^2 +\cdots+q^{k-1},
$$
Note that this forces $[k]_q=0$ if $k<0$ since the sum is empty.  Also, we will often drop the subscript $q$ if no confusion will result with the set $[k]$.
\begin{prop}
\label{S[m,n,k]:rec}
For $n\in\bbN$ and $k\in\bbZ$ we have
$S[m,0,k]=\de_{k,0}$ and, for $n\ge1$,
$$
S[m,n,k] = S[m,n-1,k-1]+[km+1]_q\ S[m,n-1,k].
$$
\end{prop}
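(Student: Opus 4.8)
The plan is to upgrade the bijective proof of the classical recursion~\eqref{S(m,n,k):rr} to the level of generating functions by tracking precisely how $\inv$ behaves under removal of the maximal base $n$. Given $\si = S_0/\ldots/S_{km}\in S([n^m],k)$ written in standard form, I would let $\si'$ be the result of deleting every element of the form $n^c$, and split into the same two cases used in the proof of \Cref{thm:S=h}. The crux is that $n$ is the \emph{largest} base, so deleting or inserting the symbols $n^c$ never changes the value of $\minb$ on any block; consequently the standard-form block ordering and the chosen representatives $s_j^r$ are identical for $\si$ and $\si'$, and every inversion not involving the base $n$ is preserved. Thus the only bookkeeping required concerns inversions whose color-zero first entry has base $n$, and there is exactly one such candidate, namely $n^0$.

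In the first case, where $n^0$ lies in a block of size one, so do $n^1,\ldots,n^{m-1}$, and since $s_{km}=n$ is maximal this $m$-tuple occupies the final $m$ blocks, with $n^0\in S_{km}$. Then $n^0$ generates no inversion because no block follows $S_{km}$; moreover any inversion $(i^0,S_l)$ of $\si$ with $l$ among these last $m$ blocks would force $i\ge s_l=n$, impossible for a base $i<n$. Hence deleting the last $m$-tuple destroys no inversions, $\inv\si=\inv\si'$, and this case contributes $q^{0}\cdot S[m,n-1,k-1]=S[m,n-1,k-1]$.

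In the second case, where $n^0$ lies in a block of size at least two, we have $\si'\in S([(n-1)^m],k)$, and $\si$ is recovered by inserting $n^0$ into one of the $km+1$ blocks $S_0,\ldots,S_{km}$ of $\si'$ (the placement of the remaining $n^c$ being forced by the colored-partition conditions). If $n^0$ is placed in $S_j$, then by the invariance above no pre-existing inversion is altered, and the only new inversions are the pairs $(n^0,S_l)$ with $l>j$; since $n\ge s_l$ holds automatically, there are exactly $km-j$ of them. Summing the resulting weight over all insertion positions gives
\[
\sum_{j=0}^{km} q^{\,km-j} \;=\; \sum_{i=0}^{km} q^{\,i} \;=\; [km+1]_q,
\]
so this case contributes $[km+1]_q\, S[m,n-1,k]$. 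Adding the two cases yields the claimed recursion, while the initial condition $S[m,0,k]=\de_{k,0}$ follows from the unique partition $\{0\}$ of $[0^m]$, which has a single block and no inversions.

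The step I expect to demand the most care is verifying that inserting the maximal base interacts cleanly with the standard form: one must confirm that no $\minb$ value changes, that the relative order of blocks and their representatives $s_j^r$ are untouched, and hence that no spurious inversions are created or lost among the elements already present in $\si'$. Once this invariance is pinned down, the clean count $km-j$ of new inversions in the second case—and the vanishing in the first—are immediate, and the telescoping to $[km+1]_q$ is routine.
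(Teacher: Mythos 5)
Your proof is correct and follows essentially the same route as the paper's: delete the copies of the maximal base $n$, observe that the size-one case preserves $\inv$ exactly and contributes $S[m,n-1,k-1]$, and that reinserting $n^0$ into block $S_j$ of $\si'$ creates precisely $km-j$ new inversions, telescoping to $[km+1]_q\,S[m,n-1,k]$. The only difference is cosmetic: you verify explicitly that the standard form and all pre-existing inversions are undisturbed by the insertion (a point the paper leaves implicit), and your insertion-position indexing is the reverse of the paper's, which counts $j$ new inversions when $n^0$ lands in the block $j$ positions from the end.
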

\begin{proof}
We follow the same line of reasoning as in the proof of~\eqref{S(m,n,k):rr}
when $q=1$ except keeping track of inversions.
So consider $\si=S_0/\ldots/S_{km+1}\in S([m^n],k)$ and remove all copies of $n$ to obtain $\si'$.

In the case when $n^0$ is in a block of size one then, by the standard form conditions, $n^0\in S_{km+1}$.  It follows that $\inv\si'=\inv\si$ and so such partitions contribute
$S[m,n-1.k-1]$ to the polynomial for $\si$.

When $n^0$ is in a block of size at least two it will be the only copy of $n$ causing new inversions.  So
$$
\inv\si = \inv\si' + j
$$
if $n^0\in S_{km+1-j}$ where $0\le j\le km$.  It follows that the contribution of such $\si$ is
$$
S[m,n-1,k]\cdot\sum_{j=0}^{km} q^j = [km+1]\ S[m,n-1,k].
$$
Adding together the two cases completes the proof.
\end{proof}

We may now give $q$-analogues of \eqref{S=h} and \eqref{S=h-nh}.

\begin{thm}
\label{S[m,n,k]:h}
We have
\begin{equation}
   \label{S=h.q}
S[m,n,k] = h_{n-k}([1],[m+1],[2m+1],\ldots,[km+1]),
\end{equation}
and
\begin{equation}
    \label{S=h-nh.q}
    \Sb[m,n,k] = h_{n-k}([1],[m+1],[2m+1],\ldots,[km+1]) - [n]_{q^m} h_{n-k-1}([m],[2m],\ldots,[km]).
\end{equation}
\end{thm}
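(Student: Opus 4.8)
The plan is to prove the two identities separately, reducing each to the complete homogeneous recursion~\eqref{h:rec} together with \Cref{S[m,n,k]:rec}.

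For~\eqref{S=h.q} I would induct on $n$, showing the right-hand side obeys the same recursion and initial data as $S[m,n,k]$. Write $H(n,k)=h_{n-k}([1],[m+1],\ldots,[km+1])$, viewed as a complete homogeneous polynomial in the $k+1$ variables $x_j=[(j-1)m+1]$, $j\in[k+1]$. Applying~\eqref{h:rec} in the last variable $x_{k+1}=[km+1]$ gives
$$
H(n,k)=h_{n-k}([1],\ldots,[(k-1)m+1])+[km+1]\,h_{n-k-1}([1],\ldots,[km+1]).
$$
The first summand has degree $n-k=(n-1)-(k-1)$ in the $k$ variables $[1],\ldots,[(k-1)m+1]$, so it equals $H(n-1,k-1)$; the second has degree $(n-1)-k$ in all $k+1$ variables, so it equals $H(n-1,k)$. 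Hence $H(n,k)=H(n-1,k-1)+[km+1]\,H(n-1,k)$, which is exactly the recursion of \Cref{S[m,n,k]:rec}, and $H(0,k)=\de_{k,0}=S[m,0,k]$. This proves~\eqref{S=h.q}.

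For~\eqref{S=h-nh.q} I would use that $\Sb([n^m],k)$ is precisely $S([n^m],k)$ with the partitions whose zero block equals $\{0,i^0,\ldots,i^{m-1}\}$ for a single base $i$ removed, so that
$$
\Sb[m,n,k]=S[m,n,k]-R[m,n,k],\qquad R[m,n,k]:=\sum_{\si:\ \#S_0=m+1}q^{\inv\si}.
$$
By~\eqref{S=h.q} the first term is known, so it suffices to prove $R[m,n,k]=[n]_{q^m}\,h_{n-k-1}([m],[2m],\ldots,[km])$, the $q$-refinement of the count $n\,h_{n-k-1}(m,\ldots,km)$ in \Cref{thm:S=h}. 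A removed $\si$ is determined by a distinguished base $i\in[n]$ for its zero block together with a colored partition $\tau$ of the remaining $n-1$ bases into $k$ $m$-tuples with trivial zero block. Deleting $i$ and relabeling identifies $\tau$ with a tuple-only partition on $[(n-1)^m]$, and I would first establish, for the tuple-only generating function $\sum_\tau q^{\inv\tau}$, the recursion analogous to \Cref{S[m,n,k]:rec} but with $[km+1]$ replaced by $[km]$ (the zero block no longer contributing), and match it to~\eqref{h:rec} in the last variable $[km]$ to conclude $\sum_\tau q^{\inv\tau}=h_{n-k-1}([m],[2m],\ldots,[km])$. The $+1$ shifts of~\eqref{S=h.q} are absent precisely because there is no zero block.

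The remaining factor comes from the inversions created by the distinguished base, i.e.\ the pairs $(i^0,S_l)$ with $i^0$ in the zero block; by the standard form their number is $m$ times the count of $m$-tuples whose minimum base lies below $i$. The goal is to show that summing $q$ to this power over the $n$ positions of $i$ yields the prefactor $[n]_{q^m}$, uniformly in $\tau$. This is the step I expect to be the main obstacle: the exponent depends not only on $i$ but on how the bases of $\tau$ are distributed among its tuples, so extracting a clean product $[n]_{q^m}\cdot\sum_\tau q^{\inv\tau}$ requires showing the insertion sum is independent of the tuple minima of $\tau$. I would attempt this by fixing $\tau$, summing over the rank of $i$ among all $n$ bases, and tracking the change in standard form as $i$ passes each tuple minimum, mirroring the bookkeeping in the $\inv\si=11$ example; verifying that the $\tau$-dependence cancels is the crux of the argument.
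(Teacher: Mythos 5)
Your proof of \eqref{S=h.q} is correct and is exactly the paper's argument: compare the recursion of \Cref{S[m,n,k]:rec} with the complete homogeneous recursion \eqref{h:rec}, checking initial conditions.

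For \eqref{S=h-nh.q}, your reduction is also the paper's route: write $\Sb[m,n,k]=S[m,n,k]-R[m,n,k]$, where $R[m,n,k]$ is the $\inv$ generating function over partitions with zero block $\{0,i^0,\ldots,i^{m-1}\}$, split such a partition into a distinguished base $i$ and a tuple-only partition $\si'$ on the remaining bases, and try to factor $R[m,n,k]=[n]_{q^m}\,h_{n-k-1}([m],\ldots,[km])$. Your evaluation of the tuple-only generating function as $h_{n-k-1}([m],\ldots,[km])$ is fine. The gap is the step you yourself flag as the crux, and it is fatal: the insertion sum is \emph{not} independent of $\si'$. When $i^0$ lies in the zero block, its inversions are the pairs $(i^0,S_l)$ with $\minb S_l<i$, so $i^0$ contributes $m\cdot\#\{\text{$m$-tuples of $\si'$ with minimum base below }i\}$ inversions, which is at most $km$. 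For fixed $\si'$ with tuple minima $b_1<\cdots<b_k$, summing over the $n$ positions of $i$ gives $b_1+(b_2-b_1)q^m+\cdots+(b_k-b_{k-1})q^{(k-1)m}+(n-b_k)q^{km}$, and this collapses to $[n]_{q^m}$ only when $k=n-1$, i.e.\ when every tuple is a singleton base.

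In fact no completion of the argument exists, because \eqref{S=h-nh.q} is false as stated with the paper's statistic (calibrated against the paper's own worked example, where $\inv\si=11$). Take $m=2$, $n=3$, $k=1$: each $\si'$ has a single tuple with minimum base $1$, so the insertion sum is $1+2q^2$ and $R[2,3,1]=(1+2q^2)(1+q)=1+q+2q^2+2q^3$, not $[3]_{q^2}h_1([2])=1+q+q^2+q^3+q^4+q^5$. For instance the partition with zero block $\{0,3^0,3^1\}$ and tuple $\{1^1,2^1\}/\{1^0,2^0\}$ has only the two inversions $(3^0,S_1)$ and $(3^0,S_2)$, not the four the product formula would require. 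Direct enumeration then gives $\Sb[2,3,1]=2+2q+2q^2+q^4$, while the right side of \eqref{S=h-nh.q} equals $2+2q+3q^2+q^3-q^5$, which is not even coefficientwise nonnegative. The paper's own proof stumbles at precisely this point: it asserts that ``$i^0$ results in $(i-1)m$ inversions,'' which is impossible in general since $i^0$ can be in inversion with at most the $km$ nonzero blocks, and which is valid only in the regime $k=n-1$. So your instinct that the factorization was the main obstacle was exactly right; the honest conclusion is that the second identity (or the statistic) must be repaired, not that a cleverer bookkeeping will make the $\si'$-dependence cancel.
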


\begin{proof}
    Comparison of Proposition~\ref{S[m,n,k]:rec} with the complete homogeneous recursion immediately gives \eqref{S=h.q}. For \eqref{S=h-nh.q}, from \eqref{S=h.q}, it suffices to show that the $\inv$ generating function on the subset of $S([n^m], k)$ whose $0$-block is of the form $\{0, i^0, i^1, \ldots, i^{m-1}\}$ for some $i \in [n]$ is
      \[ [n]_{q^m} h_{n-k-1}([m],[2m],\ldots,[km]). \]
    
    Consider deleting all $i^c$ from such a partition $\si$ and shifting all bases $j$ with $j>i$ down one, resulting in $\si'$. Every element of $S([(n-1)^m], k)$ with zero block of size $0$ is realized $n$ times. Since $i^0$ results in $(i-1)m$ inversions, we have
      \[ \sum_\si q^{\inv\si} = \sum_{i=1}^n q^{(i-1)m + \inv\si'} = [n]_{q^m} q^{\inv\si'} \]
    where the sum is over all $\si$ resulting in $\si'$.
    
    The argument that the generating function over $\si'$ satisfies the appropriate recursion for $h_{n-k-1}([m],[2m],\ldots,[km])$ is similar to the proof of Proposition~\ref{S[m,n,k]:rec} and is again omitted.
\end{proof}

\section{Ordered Stirling numbers}
\label{osn}

In this section we will study three ordered analogues of the Stirling numbers of the second kind.  In particular, we will prove that for $G(m,1,n)$ they satisfy an alternating sum identity.  Our proofs are both algebraic and using  sign-reversing involutions.

\subsection{Lattice ordered Stirling numbers}

In type $A$, the {\em ordered Stirling numbers of the second kind} are
$$
S^o(n,k) = k!\ S(n,k).
$$
So $S^o(n,k)$ counts the number of ways to partition $[n]$ into $k$ blocks and then order the blocks (as opposed to ordering the elements within each block).  The following alternating sum identity is well known~\cite[Theorem 2.2.2]{Sag:aoc}
\begin{equation}
    \label{S^o:alt}
    \sum_{k\ge0} (-1)^{n-k} S^o(n,k) = 1.
\end{equation}
One can prove this equation either algebraically or using a sign-reversing involution. 

A $q$-analogue of~\eqref{S^o:alt} is as follows.  Let $S[n,k]=S[1,n,k]$ and
$$
S^o[n,k] = [k]!\ S[n,k]
$$
where 
$$
[k]!=[1][2]\cdots[k].
$$
Then one can show~\cite[Theorem 5.5]{SS:qStB}
\begin{equation}
    \label{S^o[1]:alt}
    \sum_{k\ge0} (-q)^{n-k} S^o[n,k] = 1.
\end{equation}

In order to find a corresponding ordered analogue for $S[m,n ,k]$, we will need an identity of Sagan and Swanson.  Given a set of variables $\bx=\{x_1,\ldots,x_n\}$ and a variable $t$ we define the {\em $\bx$-falling factorial} to be
$$
t\da_k^\bx = (t-x_1) (t-x_2)\ldots (t-x_k).
$$
\begin{thm}[\cite{SS:qStB}]
\label{h:fall}
For $n\in\bbN$

\vs{10pt}

\eqqed{t^n = \sum_{k\ge 0} h_{n-k}(k+1)\ t\da_k^\bx.}
\end{thm}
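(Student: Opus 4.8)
The plan is to prove the identity by induction on $n$, working in the polynomial ring $\bbZ[x_1,x_2,\ldots][t]$ so that each coefficient $h_{n-k}(k+1)=h_{n-k}(x_1,\ldots,x_{k+1})$ is defined; note that the sole term with $k=n$ has $h_0=1$ and needs no extra variable, while terms with $k>n$ vanish since $h_{n-k}=0$ for $n-k<0$. The base case $n=0$ is immediate: on the right-hand side only the $k=0$ summand survives, contributing $h_0(1)\,t\da_0^\bx=1=t^0$.

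For the inductive step I would assume the statement for $n$, multiply it by $t$, and rewrite each product $t\cdot t\da_k^\bx$ using the single relation
\[
t\cdot t\da_k^\bx = t\da_{k+1}^\bx + x_{k+1}\,t\da_k^\bx,
\]
which is immediate from $t\da_{k+1}^\bx=(t-x_{k+1})\,t\da_k^\bx$. This expresses $t^{n+1}$ as a sum of one piece indexed by the factorials $t\da_{k+1}^\bx$ and one indexed by $t\da_k^\bx$.

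Next I would reindex the first piece by replacing $k+1$ with $k$, so that both pieces live in the common basis $\{t\da_k^\bx\}_{k\ge 0}$. The boundary term generated at $k=0$ is harmless, since its coefficient is $h_{n+1}(0)=\de_{n+1,0}=0$ for $n\ge 0$. Collecting terms, the coefficient of $t\da_k^\bx$ becomes $h_{n+1-k}(k)+x_{k+1}\,h_{n-k}(k+1)$, and applying the complete homogeneous recursion \eqref{h:rec} with degree $n+1-k$ and $k+1$ variables collapses this to $h_{n+1-k}(k+1)$. This is exactly the predicted coefficient, closing the induction.

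This argument is essentially routine, so there is no serious obstacle; the only points requiring attention are the degenerate conventions ($h_d=0$ for $d<0$, $h_0=1$, and $h_d(0)=\de_{d,0}$) and the reindexing, which together guarantee that no spurious boundary terms appear. As an alternative I could argue by generating functions: multiplying by $z^n$ and summing, the identity becomes $\tfrac{1}{1-tz}=\sum_{k\ge0}T_k$ with $T_k=z^k\,t\da_k^\bx\prod_{i=1}^{k+1}(1-x_iz)^{-1}$, using $\sum_d h_d(N)z^d=\prod_{i\le N}(1-x_iz)^{-1}$. Setting $V_k=\prod_{i=1}^k\tfrac{(t-x_i)z}{1-x_iz}$ one checks the telescoping relation $\tfrac{V_k}{1-tz}-T_k=\tfrac{V_{k+1}}{1-tz}$; since $V_N=O(z^N)\to0$ in the $z$-adic topology, summing yields $\tfrac{1}{1-tz}=\sum_n t^n z^n$, and comparing coefficients of $z^n$ recovers the theorem.
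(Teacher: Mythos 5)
Your proof is correct. Note first that the paper you are being compared against does not actually prove this statement: \Cref{h:fall} is imported verbatim from \cite{SS:qStB}, so there is no in-paper argument to match. On its own merits, your induction is sound: the base case is right, the relation $t\cdot t\da_k^\bx = t\da_{k+1}^\bx + x_{k+1}\, t\da_k^\bx$ is the correct splitting, the reindexed boundary term at $k=0$ indeed carries the coefficient $h_{n+1}(0)=\de_{n+1,0}=0$, and the collapse $h_{n+1-k}(k)+x_{k+1}h_{n-k}(k+1)=h_{n+1-k}(k+1)$ is exactly the recursion \eqref{h:rec} with degree $n+1-k$ in $k+1$ variables. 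This is also stylistically the same mechanism the surrounding paper uses for its other complete-homogeneous identities (e.g.\ \Cref{thm:S=h} and \Cref{S[m,n,k]:rec} are proved by matching initial conditions and the recursion \eqref{h:rec}), so your argument is the natural one. Your alternative generating-function route also checks out: with $V_k=\prod_{i=1}^k\frac{(t-x_i)z}{1-x_iz}$ one has $\frac{V_k}{1-tz}-\frac{V_{k+1}}{1-tz}=\frac{V_k}{1-x_{k+1}z}=T_k$, and since $V_N=O(z^N)$ the telescoping sum converges $z$-adically to $\frac{1}{1-tz}$; this second proof is a genuinely different and slightly slicker route, at the cost of working in formal power series rather than polynomials.
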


Comparing Theorems~\ref{S[m,n,k]:h} and~\ref{h:fall}, we see that to make the complete homogeneous symmetric functions agree we must let
$x_k=q\ [(k-1)m+1]$ for all $k$.  
Note that the factor of $q$ is to give the power $q^{n-k}$ as in~\eqref{S^o[1]:alt}.
Furthermore, to get the power $(-1)^{n-k}$, we must set $t=-1$.  With these substitutions we obtain
$$
t- x_k = -1- q\ [(k-1)m+1] = -[(k-1)m + 2].
$$
Thus we  let the {\em lattice ordered $q$-Stirling numbers of the second kind} be
$$
S^o[m,n,k] = [(k-1)m+2]!_m\ S[m,n,k]
$$
where $[l]!_m$ is defined analogously to~\eqref{l!_m}, replacing each factor by the corresponding polynomial.  
The term ``lattice" here comes from the fact that the unordered version comes from the intersection lattice of $G(m,1,n)$.
We have proved the following generalization of~\eqref{S^o[1]:alt}.
\begin{thm}
\label{S^o[m]:alt}
For $n\in\bbN$ and $m\ge 1$

\vs{10pt}

\eqqed{\sum_{k\ge0} (-q)^{n-k} S^o[m,n,k] = 1.}
\end{thm}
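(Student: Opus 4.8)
The plan is to specialize \Cref{h:fall} and simply read off the identity. \Cref{h:fall} expresses $t^n$ as $\sum_{k\ge0} h_{n-k}(k+1)\, t\da_k^\bx$, and, as the paragraph preceding the statement indicates, I would set $x_j = q\,[(j-1)m+1]$ for all $j$ and $t=-1$.

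First I would treat the complete-homogeneous factor. Since $h_{n-k}$ is homogeneous of degree $n-k$, pulling one factor of $q$ out of each of its $k+1$ arguments gives
\[
h_{n-k}(q[1],q[m+1],\ldots,q[km+1]) = q^{n-k}\,h_{n-k}([1],[m+1],\ldots,[km+1]) = q^{n-k}\,S[m,n,k],
\]
where the last equality is \eqref{S=h.q}. This produces exactly the power of $q$ appearing in the target sum.

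Next I would evaluate the $\bx$-falling factorial at $t=-1$. Each factor becomes $t-x_j = -1 - q\,[(j-1)m+1] = -[(j-1)m+2]$, precisely as recorded in the text. Multiplying the $k$ factors over $j\in[k]$ gives
\[
t\da_k^\bx = (-1)^k\,[2]\,[m+2]\cdots[(k-1)m+2] = (-1)^k\,[(k-1)m+2]!_m,
\]
the second equality being the definition of the $q$-analogue $[\,\cdot\,]!_m$: the factors decrease by $m$ from $[(k-1)m+2]$ down to the terminal factor $[2]$. The one point warranting care is checking this telescoping in the small cases $m=1$ and $m=2$, where the remainder convention for $!_m$ (remainder taken in $[m]$) must be matched against the terminal factor; for $m=1$ the identity $[1]=1$ makes the product $[2][3]\cdots[k+1]$ agree with $[k+1]!$.

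Finally I would assemble the pieces. With these substitutions \Cref{h:fall} becomes
\[
(-1)^n = \sum_{k\ge0} q^{n-k}\,S[m,n,k]\,(-1)^k\,[(k-1)m+2]!_m = \sum_{k\ge0} (-1)^k\,q^{n-k}\,S^o[m,n,k],
\]
using the definition $S^o[m,n,k] = [(k-1)m+2]!_m\,S[m,n,k]$. Multiplying through by $(-1)^n$ and invoking $(-1)^{n+k}=(-1)^{n-k}$ then yields $\sum_{k\ge0}(-q)^{n-k}S^o[m,n,k]=1$, as claimed. The argument is thus almost entirely bookkeeping, and the only genuine obstacle is matching the signed product $\prod_{j}(t-x_j)$ with $[(k-1)m+2]!_m$ precisely, which is where the shift by $2$ and the $!_m$ convention must be reconciled.
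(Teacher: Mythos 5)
Your proposal is correct and is essentially the paper's own argument: the paper proves this theorem precisely by substituting $x_k = q\,[(k-1)m+1]$ and $t=-1$ into \Cref{h:fall}, observing $t-x_k = -[(k-1)m+2]$, and defining $S^o[m,n,k]=[(k-1)m+2]!_m\,S[m,n,k]$ so that the identity falls out. Your write-up just makes explicit the bookkeeping the paper leaves implicit (homogeneity of $h_{n-k}$, matching the product with the $!_m$ convention including the $m=1,2$ edge cases, and the final sign manipulation), all of which checks out.
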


\subsection{Super ordered Stirling numbers}

There is another variety of ordered Stirling numbers which turns up in the Hilbert series of certain super coinvariant algebras.  We will have more to say about these algebraic aspects in the next section.  Here we will show how they satisfy an alternating sum identity using combinatorial means.

Define the {\em super $q$-Stirling numbers of the second kind} as
$$
\St[m,n,k] = h_{n-k}([m-1],[2m-1],\ldots,[(k+1)m-1]).
$$
It follows easily from recursion~\eqref{h:rec} that for $n\ge1$ we have
\begin{equation}
    \label{St:rec}
\St[m,n,k] = \St[m,n-1,k-1] + [(k+1)m-1] \St[m,n-1,k]
\end{equation}
and $\St[m,0,k]=\de_{k,0}$.

In order to develop a combinatorial model for the $\St[m,n,k]$ we must modify the rules of colored set partitions in \Cref{csp}.
A {\em super set partition} $\si=S_0/\ldots/S_{km}$ is obtained from a colored set partition by ordering some of the elements of $S_0$ as follows.
For each base $i$ in $S_0$ we totally order the colorings of  this base as 
$$
i^c i^{c+1} i^{c+2} \ldots i^{c+m}
$$
where the colors are taken modulo $m$ and $c\in[m-1]$.  Note that we do not permit $c=0$ (equivalently, $c=m$), so there are $m-1$ possible orderings for a given base.  For example, if $m=3$ then the possible orders are
$$
\text{$i^1 i^2 i^0$ and $i^2 i^0 i^1$}
$$
but not $i^0 i^1 i^2$.  Different bases are ordered independently with no implied ordering between separate bases.  In examples, we will put spaces between the various base orderings.  For example, if $m=3$, $n=3$, and $k=1$ then
$$
\text{$0\ 1^1 1^2 1^0\ 3^2 3^0 3^1 \mid 2^1/ 2^2/2^0$
and
$0\ 1^2 1^0 1^1\ 3^2 3^0 3^1 \mid 2^1/ 2^2/2^0$}
$$
are two different super colored set partitions.  On the other hand
$$
0\ 1^1 1^2 1^0\ 3^2 3^0 3^1 \mid 2^2/ 2^0/2^1
$$
is the same as the first super colored set partition above since the order of the blocks $S_1.\ldots,S_{km}$ is immaterial.  Let $\St([n^m],k)$ be the set of super colored set partitions of $[n^m]$ with $km+1$ blocks.

We will also need to enhance the definition of inversion in the super case.  
The definition of standard form for super colored set partitions is the same as the one for their ordinary cousins.
If $\si=S_0/\ldots/S_{km}$ is a super colored set partition in standard form  then the {\em inversions} of $\si$ are of two types:
\begin{enumerate}
    \item pairs $(i^0,S_l)$ which are inversions of the corresponding ordinary colored set partition, and
    \item pairs $(i^0,i^c)$ where $i^0,i^c\in S_0$ and $i^c$ is to the right of $i^0$ is the ordering on base $i$.
\end{enumerate}
We let $\Inv\si$ be the set of inversions of $\si$ and $\inv\si=\#\Inv\si$.
To illustrate, if
$$
\si = 0\ 1^2 1^0 1^1\ 3^2 3^0 3^1 \mid 2^1/ 2^2/2^0
$$
then
$$
\Inv\si = \{(1^0,1^1),\ (3^0,3^1)\}\ \uplus\
\{(3^0,S_1),\ (3^0,S_2),\ (3^0,S_3)\}
$$
so $\inv\si =5$.  The following result should not be unexpected.
\begin{thm}
We have
$$
\St[m,n,k] = \sum_{\si\in \St([n^m],k)} q^{\inv\si}.
$$
\end{thm}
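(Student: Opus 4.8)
The plan is to show that the generating function $F(m,n,k):=\sum_{\si\in\St([n^m],k)} q^{\inv\si}$ satisfies the same initial condition and recursion~\eqref{St:rec} as $\St[m,n,k]$, exactly paralleling the proof of \Cref{S[m,n,k]:rec}. The initial condition is immediate: $[0^m]=\{0\}$ admits only the zero block with no colored elements, which is the unique super set partition, has one block (so $k=0$), and no inversions, giving $F(m,0,k)=\de_{k,0}$. For the recursion I would fix $\si\in\St([n^m],k)$ in standard form and delete every copy of the base $n$ (together with its chosen ordering in the zero block, if present) to obtain $\si'$. Since the blocks of an $m$-tuple are scalar multiples of one another and hence share a common size, base $n$ either occupies a nonzero $m$-tuple of singletons, a nonzero $m$-tuple of blocks of size $\ge 2$, or the zero block $S_0$ (in which case all $m$ colorings of $n$ lie in $S_0$). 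These cases are exhaustive and mutually exclusive.

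First I would treat the singleton case. Here the $m$ blocks $\{n^0\},\ldots,\{n^{m-1}\}$ form the last $m$-tuple in standard form because $n$ is the maximum base, so by standard form condition (2) the element $n^0$ lies in the final block $S_{km}$. Thus $n^0$ produces no type~(1) inversions (no block follows it) and, being outside $S_0$, no type~(2) inversions; moreover deleting these blocks destroys no inversion of a smaller base, since any inversion $(i^0,S_l)$ with $S_l$ among them would force $i\ge s_l=n$. Hence $\inv\si=\inv\si'$ and $\si'\in\St([(n-1)^m],k-1)$, contributing exactly $F(m,n-1,k-1)$. In the remaining two cases deleting $n$ leaves the block count unchanged, so $\si'\in\St([(n-1)^m],k)$, and I must show that summing $q^{(\text{inversions created by }n)}$ over all ways of reinserting $n$ into a fixed $\si'$ yields the factor $[(k+1)m-1]_q$.

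The reinsertion splits into the two subcases. Placing $n^0$ into one of the $km$ nonzero blocks $S_j$ forces the colorings of $n$ in the rest of that $m$-tuple, and only $n^0$ creates inversions, namely the $km-j$ type~(1) inversions with the later blocks (here I use that $n$ is maximal, so the $m$-tuple order and every $s_l$ are unchanged); these placements contribute $\sum_{j=1}^{km} q^{km-j}=[km]_q$. Placing all colorings of $n$ into $S_0$ creates $km$ type~(1) inversions, one with each nonzero block since $n\ge s_l$ always, giving a factor $q^{km}$, and the number of type~(2) inversions $(n^0,n^c)$ equals $c_0-1$ when the chosen ordering on $n$ begins at color $c_0\in\{1,\ldots,m-1\}$; summing over the $m-1$ admissible orderings gives $\sum_{c_0=1}^{m-1}q^{c_0-1}=[m-1]_q$. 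The two subcases therefore contribute
\[
[km]_q + q^{km}[m-1]_q = [(k+1)m-1]_q,
\]
which is exactly the coefficient in~\eqref{St:rec}. The main obstacle, and the one point requiring genuine care rather than routine bookkeeping, is pinning down the type~(2) inversion count $c_0-1$ from the cyclic ordering convention (recalling that $c=0$ is forbidden, so exactly $m-1$ orderings occur) and then confirming the telescoping identity $[km]_q+q^{km}[m-1]_q=[km+m-1]_q$; with these in hand, $F$ and $\St[m,n,k]$ satisfy the same recursion and initial data, and everything else mirrors \Cref{S[m,n,k]:rec}.
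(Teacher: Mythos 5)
Your proof is correct and takes essentially the same approach as the paper's: both establish the boundary condition and recursion~\eqref{St:rec} by deleting base $n$, separating the singleton case from the reinsertion cases, and computing the factor $[km]_q + q^{km}[m-1]_q = [(k+1)m-1]_q$ from placements into nonzero blocks versus the zero block. The only difference is that you make explicit several details the paper leaves to the reader, such as the count $c_0-1$ of type (2) inversions and the verification that deleting the singleton $m$-tuple destroys no inversions.
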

\begin{proof}
 It suffices to show that the sum satisfies the same boundary condition and recursion~\eqref{St:rec} as $\St[m,n,k]$.  This proof follows the same line of reasoning as that of Proposition~\ref{S[m,n,k]:rec} so we will only sketch the differences.  We keep the same notation as in that demonstration.
 
 If deleting copies of $n$ results in $\si'$ having fewer blocks than $\si$, then this base was not in $S_0$.  So exactly the same argument gives the first term of the recursion.  Now assume the $n$'s were in blocks with other elements.  The contribution of the  $\si'$ is $\St[m,n-1,k]$ as before.  So consider the inversions caused by $n^0$. If it was in one of the nonzero blocks then, as previously, these contribute $[km]$.  If $n^0\in S_0$ then it is in inversion with all the nonzero blocks as well as any $n^c$ coming after it in the ordering of this base.  This gives an extra $q^{km}+q^{km+1}+\cdots+q^{(k+1)m-2}$.
 Adding the two contributions results in the desired factor of $[(k+1)m-1]$.
 \end{proof}

 The {\em ordered super $q$-Stirling numbers of the second kind} are
 $$
 \St^o[m,n,k]=[km]!_m\ \St[m,n,k].
 $$
An {\em ordered super set partition} is a sequence
$\om=(S_0/S_1/\ldots/S_{km})$ such that $\si=S_0/S_1/\ldots/S_{km}$
is a super set partition satisfying
\begin{equation}
 \label{S_{i+1}}   
 S_{i+1} = \ze_m S_i
\end{equation}
for all $i\ge1$ not divisible by $m$.
In this case we write $\om\comp [n^m]$.
Note the use of parentheses to indicate that the order of the blocks is important.  For example, when $m=n=3$ and $k=2$
\begin{equation}
    \label{sup:ptn}
    \text{$(0\  3^2 3^0 3^1 \mid 1^0 / 1^1 /1^2 \mid 2^1/ 2^2/ 2^0)$
and
$(0\  3^2 3^0 3^1 \mid 1^2/ 1^0 / 1^1 \mid 2^1/2^2/2^0)$}
\end{equation}
are two different ordered super partitions, while
$$
(0\  3^2 3^0 3^1 \mid 1^1 / 1^0 /1^2 \mid 2^1/ 2^2/ 2^0)
$$
is not an ordered super partition because 
$S_2=\{1^0\} \neq \ze_3 S_1$.
We let $\St^o([n^m],k)$ be the set of ordered super partitions of $[n^m]$ into $km+1$ blocks.  The definition of the inversion statistic is exactly the same as in the unordered case.  The proof of the next result is so similar to that of the previous one that we omit it.
\begin{thm}
\label{St^o:sum}
We have

\vs{10pt}

\eqqed{\St^o[m,n,k] = \sum_{\om\in \St^o([n^m],k)} q^{\inv\om}.}
\end{thm}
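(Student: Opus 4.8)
The plan is to imitate the recursion-based proof of the preceding theorem, the one new ingredient being the bookkeeping for the ordering of the blocks. First I would record the recursion satisfied by $\St^o[m,n,k]$ itself. Multiplying recursion~\eqref{St:rec} through by $[km]!_m$ and using the factorization $[km]!_m = [km]\,[(k-1)m]!_m$ gives, for $n\ge 1$,
\[
\St^o[m,n,k] = [km]\,\St^o[m,n-1,k-1] + [(k+1)m-1]\,\St^o[m,n-1,k],
\]
with boundary condition $\St^o[m,0,k]=\de_{k,0}$. Since the preceding theorem and the definition $\St^o=[km]!_m\,\St$ already identify $\St^o[m,n,k]$ with $[km]!_m\sum_{\si}q^{\inv\si}$, it suffices to show that $F(m,n,k):=\sum_{\om\in\St^o([n^m],k)} q^{\inv\om}$ satisfies the same recursion and boundary condition and then induct on $n$. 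The boundary case is immediate, since $[0^m]=\{0\}$ forces $F(m,0,k)=\de_{k,0}$.

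For the recursion I would take $\om=(S_0/S_1/\ldots/S_{km})\in\St^o([n^m],k)$, delete every copy of the largest base $n$ to obtain $\om'$, and organize the argument by where the colorings of $n$ sit. Because deletion neither reorders the surviving blocks nor alters any $\minb S_l$ (as $n$ is maximal), all inversions of $\om'$ persist in $\om$, and the only new inversions have first entry $n^0$. Case~$1$ is when $n$ occupies its own singleton $m$-tuple $\{n^0\},\ldots,\{n^{m-1}\}$, so that $\om'\in\St^o([(n-1)^m],k-1)$. Reconstructing $\om$ amounts to choosing one of the $k$ tuple-slots and one of the $m$ rotations placing $n^0$ in a block $S_l$, after which the constraint~\eqref{S_{i+1}} forces the rest; the new inversions are exactly the $km-l$ pairs $(n^0,S_{l'})$ with $l'>l$. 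As the position $l$ thereby ranges over all $km$ nonzero slots, the weights sum to $\sum_{l=1}^{km}q^{km-l}=[km]$ (which one also sees as $[m]\,[k]_{q^m}=[km]$), giving the term $[km]\,\St^o[m,n-1,k-1]$.

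Case~$2$ is when deletion leaves the number of tuples unchanged, so $\om'\in\St^o([(n-1)^m],k)$, and it splits in two. If $n^0$ lies in a nonzero block sharing other bases, inserting $n$ into any of the $k$ tuples with $n^0$ in any of its $m$ blocks again ranges $l$ over all $km$ nonzero positions, contributing $\sum_{l=1}^{km}q^{km-l}=[km]$. If instead $n$ lies in the zero block, its $m-1$ admissible orderings $n^c n^{c+1}\cdots$ with $c\in[m-1]$ place $n^0$ so as to create $km$ type-$1$ inversions together with $c-1$ type-$2$ inversions, contributing $\sum_{c=1}^{m-1}q^{km+c-1}=q^{km}[m-1]$. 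Since $[km]+q^{km}[m-1]=[(k+1)m-1]$, the two subcases combine to $[(k+1)m-1]\,\St^o[m,n-1,k]$, completing the recursion for $F$ and hence the induction.

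The main obstacle is the ordering bookkeeping in Case~$1$: unlike the unordered preceding theorem, here one must insert an entire new $m$-tuple into an ordered sequence and sum $q^{\inv}$ over both its position among the $k$ slots and its $m$ rotations, recognizing the factorization $[m]\,[k]_{q^m}=[km]$ (and, in Case~$2$, $[km]+q^{km}[m-1]=[(k+1)m-1]$). One should also check carefully that these are the only new inversions — in particular that a coloring $n^c$ with $c\ne 0$ can never be the first entry of an inversion, and that deleting $n$ preserves every surviving $\minb$-value and therefore every surviving inversion.
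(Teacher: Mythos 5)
Your proof is correct and follows essentially the same route as the paper: the paper omits this proof as being "so similar to that of the previous one," namely the recursion-on-$n$ deletion argument of Proposition~\ref{S[m,n,k]:rec}, which is exactly what you carry out, including the correct ordered recursion $\St^o[m,n,k]=[km]\,\St^o[m,n-1,k-1]+[(k+1)m-1]\,\St^o[m,n-1,k]$ and the key weight computations $\sum_{l=1}^{km}q^{km-l}=[km]$ and $[km]+q^{km}[m-1]=[(k+1)m-1]$.
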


We will now show that the $\St^0[m,n,k]$ satisfy an alternating sum identity.  We could do this algebraically through the use of \Cref{h:fall}.  But we will choose to give a combinatorial proof via a sign-reversing involution
$\io$ on the $\om\comp[n^m]$.  This function will be composed of two maps, one which splits blocks and one which merges them.  In order to define them we will use the {\em maximum base} of a nonempty $S\sbe[n^m]$ which is
$$
\maxb S = \max\{i \mid \text{$i^c\in S$ for some color $c$}\}.
$$
Also, as usual, all colors are to be taken modulo $m$.
\begin{defn}
\label{si:mu}
Suppose we have an ordered super partition $\om=(S_0/\ldots/S_{km})$ and consider $M=\maxb S_{jm}$ for some $j$.
Say that $\om$ is {\em splittable at $M$} if $\#S_{jm}\ge2$.
We define the {\em splitting map $\si_M$} using cases depending on whether $j=0$ or $j>0$.  If $j=0$ then 
$S_0$ contains the sequence $M^c M^{c+1} \ldots M^{c+m-1}$ where $c\neq0$.  In this case, we 
{\em split right} by letting
$$
\si_M(\om) 
= (S_0'/M^{c+1}/M^{c+2}/\ldots/M^{c+m}/S_1/S_2/\ldots/S_{km})
$$
where $S_0'$ is $S_0$ with all colors of $M$ removed.
If $j>0$ then write
\begin{equation}
    \label{S_i'}
S_{(j-1)m+1} = S_{(j-1)m+1}'\uplus \{M^c\},\
\ldots,\
S_{jm} = S_{jm}' \uplus \{M^{c+m-1}\}.
\end{equation}
There are now two more cases.  If $c\neq 0$, then we split right again where
$$
\si_M(\om) = 
(\ldots/S_{(j-1)m+1}'/S_{(j-1)m+2}'/\ldots/S_{jm}'/M^{c+1}/M^{c+2}/\ldots/M^{c+m}/\ldots)
$$
and the blocks before $S_{(j-1)m+1}'$ and after $M^{c+m+1}$ have been left invariant.  If $c=0$ then, in this third case, we {\em split left}
$$
\si_M(\om) = 
(\ldots/M^1/M^2/\ldots/M^{m-1}/M^0/S_{(j-1)m+1}'/S_{(j-1)m+2}'/\ldots/S_{jm}'/\ldots)
$$
with the same conventions as in the second case about invariant blocks.

The inverse of the splitting map will be a {\em merging map}, $\mu_M$, which unions blocks together.  Let us keep  the notation of the previous paragraph, and suppose that we have $\#S_{jm}=1$.  This implies that
\begin{equation}
\label{mer}
    S_{(j-1)m+1}=\{M^c\},\ S_{(j-1)m+2}=\{M^{c+1}\},\ \ldots,\ S_{jm}=\{M^{c+m-1}\}
\end{equation}
for some $c$.  The cases when $\om$ is {\em mergeable at $M$} depend on the value of $c$.
Suppose first that $c\neq 1$ and $M>\maxb S_{(j-1)m}$.  This gives us two cases depending on whether we are merging into the zero block or not.  If $j=1$ then we {\em merge left} by defining
\begin{equation}
  \label{mer:0}  
  \mu_m(\om) = (S_0''/S_{m+1}/S_{m+2}/\ldots/S_{km})
\end{equation}
where $S_0''$ is $S_0$ with the sequence $M^{c-1} M^c\ldots M^{c+m-2}$ added.
In the case $j>1$,  we merge left again producing
$$
\mu_M(\om)=
(\ldots/S_{(j-2)m+1}\uplus\{M^{c-1}\}/S_{(j-2)m+2}\uplus\{M^c\}/\ldots/S_{(j-1)m}\uplus\{M^{c+m-2}\}/\ldots).
$$
The third case is when $c=1$ and $M>\maxb S_{(j+1)m}$.  Then we {\em merge right} by
$$
\mu_M(\om)=
(\ldots/S_{jm+1}\uplus\{M^0\}/S_{jm+2}\uplus\{M^1\}/\ldots/S_{(j+1)m}\uplus\{M^{m-1}\}/\ldots).
$$
\end{defn}

\begin{exa}
Here are some examples when $n=m=3$ of the splitting map (forward arrows) and merging map (reverse arrows) illustrating each of the cases above when $M=3$.
\begin{align*}
(0\ 1^2 1^0 1^1\ 3^1 3^2 3^0 \mid 2^0/ 2^1/ 2^2)
&\longleftrightarrow (0\ 1^2 1^0 1^1 \mid 3^2/ 3^0/ 3^1 \mid 2^0/ 2^1/ 2^2),\\
(0\ 1^2 1^0 1^1 \mid 2^0 3^1/ 2^1 3^2/ 2^2 3^0)
&\longleftrightarrow (0\ 1^2 1^0 1^1 \mid 2^0 / 2^1 / 2^2  \mid 3^2/ 3^0/ 3^1),\\
(0\ 1^2 1^0 1^1 \mid 2^0 3^0/ 2^1 3^1/ 2^2 3^2)
&\longleftrightarrow (0\ 1^2 1^0 1^1 \mid 3^1/3^2/3^0 \mid 2^0/ 2^1/ 2^2).
\end{align*}
\end{exa}

Note that from the definition just given, if $\om$ is splittable at $M$ then $\si_M(\om)$ is mergeable at $M$.   Also  $\mu_M\si_M(\om)=\om$ because the value of $c$ determines whether these operations take place to the left or the right, and one is not permitted the sequence
$i^0 i^1\ldots i^{m-1}$ in $S_0$.  If one reverses the roles of $\si_M$ and $\mu_M$ then the same is true.  Merging and splitting maps have found applications in a number of areas, including computing cancellation-free formulas for antipodes in Hopf algebras as shown by Benedetti and Sagan~\cite{BS:ai}.  We can now define our involution.

\begin{defn}
\label{io:def}
Define $\io:\uplus_k \St([n^m],k)\ra\uplus_k \St([n^m],k)$ as follows.
Given $\om=(S_0/\ldots/S_{km})$, let $M$ be the largest integer (if any) such that $M=\maxb S_{jm}$ for some $j$ and $\om$ is either splittable or mergeable at $M$.  Note that by the  cardinality restriction, $\om$ cannot be both.   Let
$$
\io(\om)=
\begin{cases}
\si_M(\om)  &\text{if $\om$ is splittable at $M$,}\\
\mu_M(\om)  &\text{if $\om$ is mergeable at $M$,}\\
\om         &\text{if no such $M$ exists.}
\end{cases}
$$
Note that $\om$ is an involution because of the remarks from the previous paragraph and the fact that the largest $M$ such that $\om$ is splittable or mergeable is preserved by the maps.
\end{defn}

\begin{thm}
\label{St^o:alt}
We have
$$
\sum_{k\ge0} (-q)^{n-k} \St^o[m,n,k]=1.
$$
\end{thm}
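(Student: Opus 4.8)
The plan is to prove the identity combinatorially, using the model of \Cref{St^o:sum} together with the involution $\io$ of \Cref{io:def}; the strategy is to realize the signed sum as a generating function over ordered super partitions, show that $\io$ is sign-reversing and weight-preserving off its fixed points, and check that there is exactly one fixed point, of weight $1$.

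First I would expand the left-hand side. By \Cref{St^o:sum},
\[
\sum_{k\ge0}(-q)^{n-k}\St^o[m,n,k]
=\sum_{\om\comp[n^m]}(-1)^{n-k(\om)}\,q^{(n-k(\om))+\inv\om},
\]
where $\om$ is taken to have $k(\om)m+1$ blocks. Thus each ordered super partition carries the signed weight $(-1)^{n-k(\om)}q^{(n-k(\om))+\inv\om}$, and it suffices to prove that these weights sum to $1$. The sign-reversing property of $\io$ is immediate from \Cref{si:mu}: a splitting $\si_M$ turns one block of an $m$-tuple, or the colors of a single base in the zero block, into a fresh $m$-tuple, raising the number of $m$-tuples by one, while a merging $\mu_M$ lowers it by one. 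Hence $\io$ always flips the parity of $k(\om)$, and therefore the sign $(-1)^{n-k(\om)}$.

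The crux is to show that $\io$ preserves the exponent $(n-k)+\inv$; since a split lowers $n-k$ by one, this is equivalent to the assertion that each split raises $\inv$ by exactly one (and each merge lowers it by exactly one). I would prove this by the case analysis of \Cref{si:mu}. The key observation is that $M=\maxb S_{jm}$ is the largest base in its block, so removing all copies of $M$ changes the minimum base of no surviving block, and the new singleton $M$-tuple has minimum base $M$; consequently every inversion not involving the color-zero element $M^0$ is unchanged, and no color-zero element other than $M^0$ can invert with the new tuple. It then remains to track the inversions created by $M^0$. When $M$ is split out of the zero block the $c-1$ type~(2) inversions of $M^0$ inside $S_0$ are lost while its type~(1) inversions rise by $c$, a net gain of one; when $M$ is split to the right out of a nonzero tuple, the type~(1) inversions of $M^0$ with the later blocks of its own tuple rise from $c-1$ to $c$; and the split-left case $(c=0)$ is handled the same way. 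The worked examples, in which $\inv$ passes from $4$ to $5$, from $3$ to $4$, and from $5$ to $6$, are the prototypes of these three cases. Carrying out this bookkeeping uniformly across all cases is the main obstacle.

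Finally I would identify the fixed points. If the largest base $n$ lies in the zero block, or shares a tuple with a smaller base, then the corresponding block has at least two elements, so $\om$ is splittable at $M=n$; hence in any fixed point $n$ must occupy an $m$-tuple by itself. Such a singleton tuple is always mergeable---since $n$ exceeds the maximum base of either neighbor---unless it is the last tuple and its colors run $n^1,n^2,\ldots,n^{m-1},n^0$ (the case $c=1$ with no tuple to its right, which blocks both the left- and right-merge rules). Since the zero block, if it contained any base, would be splittable at its maximum base, peeling off base $n$ and inducting forces the unique fixed point
\[
\om^*=\bigl(\{0\}\mid 1^1/1^2/\cdots/1^0\mid 2^1/\cdots/2^0\mid\cdots\mid n^1/\cdots/n^0\bigr),
\]
in which the zero block is $\{0\}$ and each base sits alone, in increasing order. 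Here $k(\om^*)=n$, so $n-k=0$ and the sign is $+1$; and since each $i^0$ lies in the last block of its tuple while every later block has minimum base exceeding $i$, no type~(1) inversion occurs and there are no type~(2) inversions, whence $\inv\om^*=0$. Therefore $\om^*$ contributes $(+1)q^0=1$, every other $\om$ cancels against $\io(\om)$, and the sum equals $1$.
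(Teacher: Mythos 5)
Your strategy is the same as the paper's: expand the sum via \Cref{St^o:sum}, cancel in pairs under the involution $\io$ of \Cref{io:def}, and identify the unique fixed point of weight $1$. Your sign-reversal observation, your bookkeeping of the inversions involving $M^0$ in each of the three splitting cases, and your fixed-point analysis all agree with the paper (which in fact only verifies the zero-block case of the inversion count explicitly).

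There is, however, a genuine gap at what you yourself call the crux. You justify the claim that ``no color-zero element other than $M^0$ can invert with the new tuple'' solely from the fact that $M=\maxb S_{jm}$ is the largest base \emph{in its own block}. That is not sufficient: a base larger than $M$ lying in a block to the \emph{left} of the split position gains an inversion with every block of the newly created $m$-tuple, since that tuple has minimum base $M$. Concretely, take $m=2$, $n=3$ and $\om = (0 \mid 3^1/3^0 \mid 1^0 2^0/1^1 2^1)$, so $\inv\om=4$. Here $M=2$ is the largest base of its block, and the split (left, $c=0$) produces $(0 \mid 3^1/3^0 \mid 2^1/2^0 \mid 1^0/1^1)$, which has $\inv=7$: the element $3^0$ acquires two new inversions with the tuple $2^1/2^0$, so the exponent $n-k+\inv$ jumps from $5$ to $7$ and cancellation would fail. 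The theorem is saved only because $\io$ never performs this split: by \Cref{io:def} it acts at the \emph{largest} actionable base, here $M=3$, which is mergeable to the right. So a complete proof must invoke that maximality, showing that every base $i>M$ is neither splittable nor mergeable, hence sits in a singleton $m$-tuple, and then---by analyzing the merge rules (a non-mergeable singleton tuple with first color $c\neq 1$ forces a still larger base on its left, eventually making the zero block or the first tuple actionable, a contradiction; while $c=1$ forces all tuples to its right to carry even larger bases)---that all bases exceeding $M$ lie strictly to the right of the blocks touched by $\si_M$. This argument, or something equivalent, is the missing ingredient; without it your case analysis establishes the change in the inversions of $M^0$ but not that the remaining inversions are untouched.
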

\begin{proof}
Throughout the proof we keep the notation of \Cref{si:mu}.
If $\om\in\St^o([n^m],k)$ then define its {\em sign} to be
\begin{equation}
\label{sgn}
   \sgn\om=(-1)^{n-k}
\end{equation}
Note that both $\si_M$ and $\mu_M$ are sign-reversing since they change the number of $m$-tuples in $\om$ by one.

By the previous definition and  \Cref{St^o:sum}
we have
\begin{equation}
\label{sgn:sum}
\sum_{k\ge0} (-q)^{n-k} \St^o[m,n,k] 
= \sum_{ \om\in \uplus_k \St^o([n^m],k)}\sgn\om\ q^{n-k+\inv\om}.
\end{equation}

We claim that in any two-cycle of $\io$, the terms for $\om$ and $\io(\om)$ will cancel because they have opposite sign, but the same value of $n-k+\inv$.  In particular, applying $\si_M$ will increase both $k$ and  $\inv$ by one.  Since $\mu_M$ is the inverse, it will do the opposite.  So in either case the exponent on $q$ is preserved.

We will only check the claim for the first case in the definition of $\si_M$ and leave the other two to the reader.  The fact that $k$ increases by one has already been noted.  As far as the number of inversions, the only inversions which change are those caused by $M^0$.
It is in inversion with the elements after it in its sequence in $S_0$ as well as with certain $S_j$ for $j\ge1$.  The latter inversions are carried over in $\si_M(\om)$.  However, $M^0$ moves one place to the left in the blocks $S_1'/\ldots/S_m'$ of $\om'=\si_M(\om)$ thus causing one more inversion.

To finish the proof, it suffices to show that $\io$ has a unique fixed point
$$
\om_0= (0 \mid 1^1/1^2/\ldots/1^{m-1}/1^0 \mid  \ldots \mid n^1/n^2/\ldots/n^{m-1}/n^0 )
$$
since 
$$
\sgn\om_0\ q^{n-k-\inv\om} =(-1)^{n-n}\ q^{n-n-0} = 1.
$$
If $\io(\om)=\om$ then $\om$ can not have any blocks of size at least $2$ since otherwise it would be splittable.  Next, we claim that the $m$-tuples of singletons must be arranged in increasing order of base.  If not, then there would have to be an index $j\ge1$ with $\maxb S_{(j-1)m}<\maxb S_{jm}>\maxb S_{(j+1)m}$.  But in this case the $m$-tuple containing $S_{jm}$ could be merged either to the left or right depending on the value of $c$.  Finally, we have to rule out the possibility that, for some $j\in[n]$, the $j$th $m$-tuple begins with $j^c$ for $c\neq1$.  But if this were true, then that $m$-tuple could be merged left. So $\io_0$ is the only possible fixed point and it is easy to check that it is indeed left invariant by $\io$.  
\end{proof}

\subsection{Chan-Rhoades Stirling numbers}

These Stirling numbers appear in a paper of Chan and Rhoades~\cite{CR:Rnk} in studying coinvariant algebras for $G(m,1,n)$.  The unordered version is the same as the $S[m,n,k]$.  But the factors in the product are one larger than the exponents rather than the coexponents, and thus equal the degrees.

Define the {\em Chan-Rhoades ordered $q$-Stirling numbers} to be
\begin{equation}
\label{CR:def}
S_{\CR}^o[m,n,k] = [km]!_m\ S[m,n,k].
\end{equation}
A {\em  CR-ordered set partition} is a sequence $\om=(S_0/S_1/\ldots/S_{km})$ satisfying the same restriction~\eqref{S_{i+1}} on the nonzero blocks, but with no ordering of the elements of the zero block.  
We let $S_{\CR}^o([n^m],k)$ be the set of such partitions.
For example, the partitions in~\eqref{sup:ptn} are still distinct as CR partitions, and the CR partition
$$
(0\  3^0 3^1 3^2 \mid 1^0 / 1^1 /1^2 \mid 2^1/ 2^2/ 2^0)
$$
is the same as the first of that pair.

The definition of inversion is exactly the same as in the unordered case.  The reader should not find it hard to supply a proof of the following proposition.
\begin{thm}
We have

\vs{10pt}

\eqqed{
S_{\CR}^o[m,n,k]= \sum_{\om\in S_{\CR}^o([n^m],k) } q^{\inv\om}.
}
\end{thm}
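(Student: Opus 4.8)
The plan is to show that the generating function $T[m,n,k] := \sum_{\om \in S_{\CR}^o([n^m],k)} q^{\inv\om}$ obeys the same boundary condition and recursion as $S_{\CR}^o[m,n,k]$, exactly mirroring the proof of Proposition~\ref{S[m,n,k]:rec}. First I would record the recursion satisfied by $S_{\CR}^o[m,n,k]$ itself. Since $[km]!_m = [km]\,[(k-1)m]!_m$, multiplying the recursion of Proposition~\ref{S[m,n,k]:rec} through by $[km]!_m$ gives
\[
S_{\CR}^o[m,n,k] = [km]\, S_{\CR}^o[m,n-1,k-1] + [km+1]\, S_{\CR}^o[m,n-1,k],
\]
with boundary value $S_{\CR}^o[m,0,k] = \de_{k,0}$ (using $[0]!_m = 1$). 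It then suffices to prove that $T[m,n,k]$ satisfies this same recursion, the boundary condition being immediate since for $n=0$ the only CR-ordered partition of $\{0\}$ is the lone zero block, with $\inv = 0$ and $k=0$.

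For the combinatorial recursion, given $\om = (S_0/\ldots/S_{km}) \in S_{\CR}^o([n^m],k)$, I would delete every copy of the maximal base $n$ to obtain $\om'$, splitting into the same two cases as in Proposition~\ref{S[m,n,k]:rec}. Because only color-$0$ elements can begin an inversion and $n$ is the largest base, the only inversions that are created or destroyed are those of the form $(n^0,S_l)$, and such a pair is an inversion exactly when $S_l$ lies to the right of the block containing $n^0$. Moreover, deleting the $n^c$ never alters any $\minb$ nor the relative order of the surviving blocks, so $\inv\om = \inv\om' + (\text{inversions caused by } n^0)$.

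In the first case the $n^c$ form their own $m$-tuple of singleton blocks, and deleting them lands $\om'$ in $S_{\CR}^o([(n-1)^m], k-1)$. Here the key point is that as the reinserted $m$-tuple ranges over all gaps between the remaining $m$-tuples and over all $m$ cyclic rotations (the intra-tuple order being forced by~\eqref{S_{i+1}}), the singleton $\{n^0\}$ occupies each of the positions $p \in \{1,2,\ldots,km\}$ exactly once, contributing $km - p$ inversions; summing $q^{km-p}$ over these positions yields $[km]$, so this case contributes $[km]\, T[m,n-1,k-1]$. In the second case $n^0$ lies in a block of size at least two, either in the zero block or in a nonzero block, and $\om' \in S_{\CR}^o([(n-1)^m],k)$. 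Reinserting $n^0$ into the zero block (which by condition (i) forces all the other $n^c$ there, with no ordering) gives $km$ inversions, while inserting it into position $l \in \{1,\ldots,km\}$ (which forces the remaining $n^c$ throughout that $m$-tuple by~\eqref{S_{i+1}}) gives $km - l$ inversions; the total contribution is $\bigl(q^{km} + \sum_{l=1}^{km} q^{km-l}\bigr)\, T[m,n-1,k] = [km+1]\, T[m,n-1,k]$. Adding the two cases recovers the recursion.

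I expect the main obstacle to be the bookkeeping in the first case. Unlike the unordered setting of Proposition~\ref{S[m,n,k]:rec}, where a singleton $m$-tuple contributes a trivial factor of $1$, one must verify that the combined choice of gap and cyclic rotation of the inserted singleton $m$-tuple places $\{n^0\}$ at each of the positions $1,\ldots,km$ precisely once; it is exactly this ordering freedom that upgrades the contribution of the first case to $[km]$ and produces the product $[km]!_m$ in the end. The analogous verification in the second case, distinguishing the single forced zero-block insertion from the $km$ nonzero-block insertions, is routine by comparison with the argument already given for Proposition~\ref{S[m,n,k]:rec}.
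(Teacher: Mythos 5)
Your proof is correct and is essentially the argument the paper intends: the paper leaves this proof to the reader as a routine analogue of Proposition~\ref{S[m,n,k]:rec} and Theorem~\ref{St^o:sum}, and your delete-the-largest-base recursion supplies exactly those details, with the key point (the gap-plus-rotation freedom of the reinserted singleton $m$-tuple placing $\{n^0\}$ in each of the $km$ block positions once, upgrading the first term to $[km]\,T[m,n-1,k-1]$) worked out correctly. The algebraic recursion $S_{\CR}^o[m,n,k]=[km]\,S_{\CR}^o[m,n-1,k-1]+[km+1]\,S_{\CR}^o[m,n-1,k]$ and the case analysis for the second term (one zero-block insertion giving $q^{km}$ plus $km$ nonzero-block insertions giving $[km]$, totaling $[km+1]$) are both accurate, so there is no gap.
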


The splitting and merging maps are similar to those in the super case.

\begin{defn}
\label{CR:si:mu}
Let $\om=(S_0/\ldots/S_{km})$ be a CR-ordered set partition and $M=\maxb S_{jm}$ for some $j$.  Call $\om$
{\em splittable at $M$} if $\#S_{jm}\ge2$.  To define the {\em splitting map} $\si_m$, first consider the case when $j=0$.  In that case we {\em split right}
$$
\si_M(\om) = (S_0'/M^0/M^1/\ldots/M^{m-1}/S_1/S_2/\ldots/S_{km})
$$
where $S_0'$ is obtained from $S_0$ by removing all copies of $M$.  If $j\ge0$ then, using the notation of~\eqref{S_i'}, we  split right again if $c=1$ to get
$$
\si_M(\om) = (\ldots/S_{(j-1)m+1}'/S_{(j-1)m+2}'/\ldots/S_{jm}'/M^0/M^1/\ldots/M^{m-1}/\ldots)
$$
or {\em split left} if $c\neq 1$ and obtain
$$
\si_M(\om) = (\ldots/M^{c-1}/M^{c+m-2}/\ldots/M^{m-1}/M^0/S_{(j-1)m+1}'/S_{(j-1)m+2}'/\ldots/S_{jm}'/\ldots).
$$

We now assume that we have an $m$-tuple of blocks as in~\eqref{mer}.  The cases when $\om$ is {\em mergeable at $M$} depend on $c$.  Suppose $c=0$ and $M>\maxb S_{(j-1)m}$.  If $j=1$ then {\em merge left} exactly as in~\eqref{mer:0} except that no order is put on the copies of $M$ in $S_0''$.  If $j>1$ then merging left is defined by
$$
\mu_M(\om)=
(\ldots/S_{(j-2)m+1}\uplus\{M^{1}\}/S_{(j-2)m+2}\uplus\{M^2\}/\ldots/S_{(j-1)m}\uplus\{M^{m-1}\}/\ldots).
$$
Lastly, of $c\neq 0$ then we can {\em merge right} by
$$
\mu_M(\om)=
(\ldots/S_{jm+1}\uplus\{M^{c+1}\}/S_{jm+2}\uplus\{M^{c+2}\}/\ldots/S_{(j+1)m}\uplus\{M^{c+m}\}/\ldots).
$$
\end{defn}

\begin{exa}
\label{S_CR:ex}
As some illustrations of the CR maps, consider $n=m=3$.  In the next three examples we always have $M=3$ with splitting or merging going forward or backward, respectively.
\begin{align*}
(0\ 1^1 1^2 1^0\ 3^1 3^2 3^0 \mid 2^0/ 2^1/ 2^2)
&\longleftrightarrow (0\ 1^2 1^0 1^1 \mid 3^0/ 3^1/ 3^2 \mid 2^0/ 2^1/ 2^2),\\
(0\ 1^1 1^2 1^0 \mid 2^0 3^1/ 2^1 3^2/ 2^2 3^0)
&\longleftrightarrow (0\ 1^1 1^2 1^0 \mid 2^0 / 2^1 / 2^2  \mid 3^0/ 3^1/ 3^2),\\
(0\ 1^1 1^2 1^0 \mid 2^0 3^0/ 2^1 3^1/ 2^2 3^2)
&\longleftrightarrow (0\ 1^1 1^2 1^0 \mid 3^2/3^0/3^1 \mid 2^0/ 2^1/ 2^2).
\end{align*}
Note that
$$
(0 \mid 1^2/1^0/1^1 \mid 2^1/2^2/2^0 \mid 3^1/3^2/3^0)
$$
cannot be split since all blocks have size one.  It also cannot be merged.  Indeed, there can be no merges to the right because the bases are weakly increasing.  And one can only merge left if one has the sequence
$M^0/M^1/\ldots/M^{m-1}$.
\end{exa}

Define an involution on $\io$ on $\uplus_k S_{\CR}^o([n^m],k)$ exactly as in \Cref{io:def}.  This will give us a combinatorial proof of the next result which can also be obtained from substitution in \Cref{h:fall}.
\begin{thm}
\label{S_CR:alt}
We have
$$
\sum_{k\ge0} (-q^{m-1})^{n-k} S_{\CR}^o[m,n,k] = [m-1]^n.
$$
\end{thm}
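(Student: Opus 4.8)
The plan is to give a combinatorial proof via the involution $\io$, mirroring the argument for \Cref{St^o:alt}; the identity can alternatively be extracted algebraically from \Cref{h:fall}, which I sketch at the end. Using the combinatorial model of the preceding theorem together with \eqref{CR:def}, I would first rewrite the left-hand side as a signed, $q$-weighted sum over CR-ordered set partitions,
\[
\sum_{k\ge0}(-q^{m-1})^{n-k}S_{\CR}^o[m,n,k]
=\sum_{\om\in\uplus_k S_{\CR}^o([n^m],k)}\sgn\om\;q^{(m-1)(n-k)+\inv\om},
\]
where $\sgn\om=(-1)^{n-k}$ for $\om\in S_{\CR}^o([n^m],k)$. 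The goal is then to show that $\io$ pairs up and cancels all terms except for certain fixed points whose generating function is exactly $[m-1]^n$.

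Next I would verify that $\io$ is sign-reversing and weight-preserving on each two-cycle $\{\om,\io(\om)\}$. Because $\si_M$ converts a block of size at least two into a singleton $m$-tuple, it raises $k$ by one and so flips the sign, while $\mu_M$ reverses this. \emph{The main obstacle is the bookkeeping on the exponent of $q$:} raising $k$ by one decreases $(m-1)(n-k)$ by $m-1$, so to keep $(m-1)(n-k)+\inv\om$ constant I must show that each application of $\si_M$ increases $\inv$ by exactly $m-1$. Where the super case gained a single inversion, here pulling the base $M$ out into its own singleton $m$-tuple causes the color-zero element $M^0$ to sit in inversion with the $m-1$ other blocks of that $m$-tuple, producing precisely $m-1$ new inversions while leaving all other inversions untouched (compare the first line of \Cref{S_CR:ex}, where the split at base $3$ raises $\inv$ from $5$ to $7$). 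As in the super proof I would check this carefully in the split-right case and note that the split-left and $j>0$ cases are parallel.

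It then remains to find the fixed points and add up their weights. A partition fixed by $\io$ admits no split, so every nonzero block is a singleton and $S_0=\{0\}$; in particular $k=n$, making $\sgn\om=+1$ and the prefactor $q^{(m-1)(n-k)}$ trivial. Ruling out all merges forces the $n$ singleton $m$-tuples into increasing order of base and forbids any $m$-tuple from starting in color $0$ (the zero block has maximum base $0$ at the front, which makes any leading $c=0$ tuple mergeable left and any base descent mergeable right), so the fixed points are exactly
\[
\om=(0\mid 1^{c_1}/\cdots/1^{c_1+m-1}\mid\cdots\mid n^{c_n}/\cdots/n^{c_n+m-1}),\qquad c_i\in\{1,\dots,m-1\}.
\]
Here the only inversions are those of each $i^0$ against the later blocks of its own $m$-tuple, which number $c_i-1$, so $\inv\om=\sum_{i=1}^n(c_i-1)$ and
\[
\sum_{\om\text{ fixed}}q^{\inv\om}=\prod_{i=1}^n\bigl(1+q+\cdots+q^{m-2}\bigr)=[m-1]^n,
\]
which is the claimed value.

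Finally, for the algebraic route I would record the $q$-integer identity $[jm]=q^{m-1}[(j-1)m+1]+[m-1]$ and substitute $x_j=-q^{m-1}[(j-1)m+1]$ together with $t=[m-1]$ into \Cref{h:fall}. Then $t-x_j=[jm]$ gives $t\da_k^\bx=[km]!_m$, homogeneity of $h_{n-k}$ combined with \eqref{S=h.q} gives $h_{n-k}(x_1,\dots,x_{k+1})=(-q^{m-1})^{n-k}S[m,n,k]$, and $t^n=[m-1]^n$, so \Cref{h:fall} collapses directly to the identity via \eqref{CR:def}.
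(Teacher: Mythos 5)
Your proposal is correct and takes essentially the same route as the paper's own proof: the same sign-reversing involution $\io$ built from the CR splitting/merging maps, the same weight bookkeeping (a split raises $\inv$ by exactly $m-1$ to compensate the drop of $q^{m-1}$ in the prefactor), and the same fixed-point analysis yielding the partitions with singleton $m$-tuples $i^c/i^{c+1}/\cdots/i^{c+m-1}$, $c\neq 0$, in increasing base order, whose weights multiply to $[m-1]^n$. Your closing algebraic sketch via Theorem~\ref{h:fall} (with $x_j=-q^{m-1}[(j-1)m+1]$, $t=[m-1]$, so $t-x_j=[jm]$) is precisely the alternative the paper alludes to but does not carry out, so it is a welcome addition rather than a divergence.
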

\begin{proof}
  The proof is similar enough to that of \Cref{St^o:alt} that we will just discuss the differences.  From the previous result we have
  $$
  \sum_{k\ge0} (-q^{m-1})^{n-k} S_{\CR}^o[m,n,k] 
  =\sum_{ \om\in \uplus_k S_{\CR}^o([n^m],k)}\sgn\om\ q^{(m-1)(n-k)+\inv\om}
  $$
where $\sgn\om$ is defined as in~\eqref{sgn}.

For the cancelling terms, we wish to leave $(m-1)(n-k)+\inv\om$ invariant.  So when splitting or merging, we want $\inv$ to increase or decrease by $m-1$, respectively.  We will check the case when $\si_M$ splits to the right.  This can only happen if $M^0\in S_{jm}$ for some $j$.  The inversions of $M^0$ with other blocks to its right are preserved by the split.  And in the new $m$-tuple $M^0/M^1/\ldots/M^{m-1}$ we see that $M^0$ is causing $m-1$ new inversions as desired.

We claim that the fixed points of $\io$ are precisely the ordered partitions  with $k$ $m$-tuples where the $i$th $m$-tuple is of the form 
\begin{equation}
\label{m-tuple}
    i^c/i^{c+1}/\ldots/i^{c+m-1}
\end{equation} 
for some $c\neq0$.  Checking that these are fixed is similar to the argument for the last partition in \Cref{S_CR:ex}.  To see that there are no others, suppose that $\om$ is a fixed partition.  Then it must have all blocks of size one, since larger blocks can be split.  The bases must also be in weakly increasing order by the same argument as in the proof of \Cref{St^o:alt}.  Finally, if $c=0$ then the $m$-tuple can always be merged left.

To complete the proof, we must compute the terms for the fixed points $\om$.  Since $k=n$, all such $\om$ have $\sgn\om=1$.  Furthermore, we see that the $m$-tuple in~\eqref{m-tuple} contributes $q^{c-1}$ for $c=1,\ldots,m-1$.  So the total contribution of the $i$th $m$-tuple is $[m-1]$ and these contributions are independent, giving the desired product.
\end{proof}

%%%%%%%%%%%%%%%%%%%%%%%%%%%%%%%%%%%%%%%%%%%%%%%%%%%%%%%%%

\section{Coinvariant algebras}
\label{ca}

We now discuss various generalizations of the coinvariant algebra for $G=G(m,1,n)$.  Throughout this section, $G$ will refer to this group.  In the case of super coinvariant algebras, we will propose analogues of the Artin basis which generalize those conjectured by Sagan and Swanson in types $A$ and $B$~\cite{SS:qStB} and recently proved to be a basis in type $A$ by Angarone, Commins, Karn, Murai and Rhoades~\cite{ACKMR:sch}.  If these are indeed bases in more general type, then the graded Hilbert functions can be expressed in terms of the $\St^o[m,n,k]$.  We will also discuss another variant of the coinvariant algebra due to Chan and Rhoades~\cite{CR:Rnk}.

\subsection{Ordinary coinvariants}

Recall that $\bx=\{x_1,\ldots,x_n\}$.  Let $G$ act on the polynomial algebra $\bbC[\bx]$ in the same way as its action on $[n^m]$ as described in Subsection~\ref{cp} where $x_i$ takes the place of $\bfe_i$.  Let $\bbC[\bx]^G_+$ be the invariants of this action which have zero constant term.  The corresponding {\em coinvariant algebra} is the quotient.
$$
\RG_{m,n} = \frac{\bbC[\bx]}{\spn{\bbC[\bx]^G_+}}.
$$

There is a standard basis for $\RG_{m,n}$ which can be described as follows.  Suppose $\al=(\al_1,\ldots,\al_n)$ is a {\em weak composition}, that is, a sequence of nonnegative integers called {\em parts}.  We let
$$
\bx^\al = x_1^{\al_1}\cdots x_n^{\al_n}
$$
We also partially order weak compositions componentwise.

\begin{defn}
The {\em Artin basis} for $\RG_{m,n}$ is
$$
\cA_{m,n} = \{\bx^\al \mid \al\le (m-1,2m-1,\ldots,nm-1)\}.
$$
The composition $(m-1,2m-1,\ldots,nm-1)$ is called the {\em $(m,n)$-staircase}.  An example is given in Figure~\ref{stair} where the parts are vertical.
\end{defn}

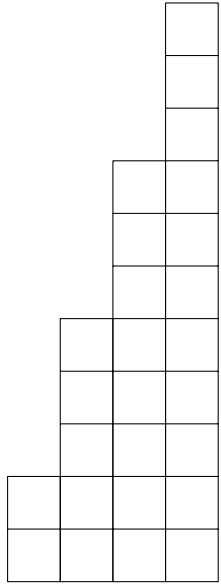
\begin{figure}
\bce
\begin{tikzpicture}[scale=.7]
\draw (0,0) grid (1,2);
\draw (1,0) grid (2,5);
\draw (2,0) grid (3,8);
\draw (3,0) grid (4,11);
\end{tikzpicture}
\ece
    \caption{The $(3,4)$-staircase $(2,5,8,11)$}
    \label{stair}
\end{figure}

The fact that $\cA_{m,n}$ is a basis for $\RG_{m,n}$ trivializes the computation of its Hilbert series. The following result is originally due to Chevalley. See \cite{SW:hdf} for its history and a proof.
\begin{thm}
The Hilbert series of $\RG_{m,n}$ is

\vs{10pt}

\eqqed{
\Hilb(\RG_{m,n},q) = [mn]!_m.
}
\end{thm}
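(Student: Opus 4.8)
The plan is to take as given the fact---stated just above and attributed to Chevalley, with a proof in \cite{SW:hdf}---that the Artin basis $\cA_{m,n}$ is a $\bbC$-basis for $\RG_{m,n}$, and to read off the Hilbert series directly from it. Once we know $\cA_{m,n}$ is a basis, the Hilbert series is simply the degree generating function of its elements, namely
\[
\Hilb(\RG_{m,n},q) = \sum_{\bx^\al \in \cA_{m,n}} q^{|\al|},
\]
where $|\al| = \al_1 + \cdots + \al_n$ denotes the total degree of the monomial $\bx^\al$.

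The key observation is that the defining condition $\al \le (m-1, 2m-1, \ldots, nm-1)$ is a componentwise inequality, so the index set factors as a product of integer intervals: $\bx^\al$ ranges over $\cA_{m,n}$ exactly when $\al_i \in \{0,1,\ldots,im-1\}$, independently for each $i \in [n]$. Since $|\al|$ splits as a sum over the parts, the generating function factors accordingly,
\[
\Hilb(\RG_{m,n},q) = \prod_{i=1}^n \left( \sum_{\al_i=0}^{im-1} q^{\al_i} \right) = \prod_{i=1}^n [im]_q.
\]
It then remains only to match this product with $[mn]!_m$. By the definition of $[l]!_m$ (the $q$-analogue of~\eqref{l!_m}), the factors of $[mn]!_m$ decrease from $mn$ by steps of $m$ down to the remainder of $mn$ on division by $m$, which under the convention $r \in [m]$ is $m$ itself; that is, $[mn]!_m = [mn]_q [m(n-1)]_q \cdots [m]_q = \prod_{i=1}^n [im]_q$, exactly the product above. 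As a sanity check, setting $q=1$ yields $\prod_{i=1}^n im = m^n n! = |G(m,1,n)|$, the expected dimension of the coinvariant algebra.

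In this formulation the only substantive input is the Chevalley-type statement that $\cA_{m,n}$ is a basis, so I expect that to be the genuine obstacle; the excerpt defers its proof to \cite{SW:hdf}. Granting it, the remaining argument is the elementary factorization above, which is precisely why the authors remark that the basis ``trivializes the computation.'' The one point to handle with a little care is the indexing convention in $[l]!_m$ when $l$ is a multiple of $m$, so that the chain of factors terminates at $[m]_q$ rather than at an empty product or a spurious $[0]_q$; this is immediate from the stated definition once the remainder is taken in $[m]$ rather than $\{0,\ldots,m-1\}$.
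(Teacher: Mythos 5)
Your proposal is correct and matches the paper's approach exactly: the paper likewise treats the statement as a consequence of the Artin basis fact (attributed to Chevalley, with proof deferred to \cite{SW:hdf}), remarking that this basis ``trivializes the computation.'' Your explicit factorization $\prod_{i=1}^n [im]_q = [mn]!_m$, including the care taken with the convention $r\in[m]$ in the definition of $[l]!_m$, is precisely the trivial computation the paper has in mind.
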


\subsection{Super coinvariants}

\begin{figure}
\bce
\begin{tikzpicture}[scale=.7]
\draw(-2,6) node{$\be(T)=$};
\draw(-2,14) node{$\phi(T)=$};
\draw(.5,14) node{$(1,$};
\draw(3,14) node{$2,$};
\draw(5.5,14) node{$1,$};
\draw(7,14) node{$0,$};
\draw(8.5,14) node{$1)$};
\draw(0,0) grid (1,1);
\draw[fill] (0,.9) rectangle (1,1.1);
\draw (1,0) grid (2,2);
\draw (2,0) grid (4,4);
\draw[fill] (2,3.9) rectangle (4,4.1);
\draw (4,0) grid (5,5);
\draw (5,0) grid (6,7);
\draw[fill] (5,6.9) rectangle (6,7.1);
\draw (6,0) grid (7,8);
\draw (7,0) grid (8,11);
\draw[fill] (6.9,9.9) rectangle (7.1,10.1);
\draw (8,0) grid (9,13);
\draw[fill] (8,12.9) rectangle (9,13.1);
\draw(-2,-1) node{$T=$};
\draw(.5,-1) node{$\{1,$};
\draw(2.5,-1) node{$3,$};
\draw(3.5,-1) node{$4,$};
\draw(5.5,-1) node{$6,$};
\draw(8.5,-1) node{$9\}$};
\draw(-2,-2) node{$U=$};
\draw(1.5,-2) node{$\{2,$};
\draw(4.5,-2) node{$5,$};
\draw(6.5,-2) node{$7,$};
\draw(7.5,-2) node{$8\},$};
\end{tikzpicture}
\ece
    \caption{Compositions $\be(T)$ and $\phi(T)$ when $m=3$, $n=9$ and $T=\{1,3,4,6,9\}$}
    \label{be(T):fig}
\end{figure}

Consider a second set of variables $\bt=\{\th_1,\ldots,\th_n\}$ which commute with the $x$-variables and anticommute among themselves, that is
$$
    \th_i \th_j = -\th_j\th_i.
$$
So $\th_i^2=0$ for all $i$.  It follows that monomials in these variables are square-free and so indexed by subsets $T\sbe[n]$.  We will let
$$
\th_T = \prod_{i\in T} \th_i,
$$
where for concreteness we take the product in increasing order of $i$.

Let $G$ act on the algebra $\bbC[\bx,\bt]$ diagonally, where $x_i$ or $\theta_i$ takes the place of $\bfe_i$.  Keeping the notation from the previous subsection, we have  a {\em super coinvariant algebra} which is
$$
\SRG_{m,n} = \frac{\bbC[\bx,\bt]}{\spn{\bbC[\bx,\bt]^G_+}}.
$$
We conjecture that this algebra is closely related to the super ordered Stirling numbers.  We will use the variable $t$ to track the $\theta$-degrees in the next Hilbert series.
\begin{conj}
\label{SRG:con}
We have
$$
\Hilb(\SRG_{m,n};q,t) = \sum_{k\ge0} \St^o[m,n,k] t^{n-k}
$$
\end{conj}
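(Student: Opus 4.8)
Because \Cref{SRG:con} is a statement about a bigraded Hilbert series, the natural route is to exhibit an explicit monomial basis for $\SRG_{m,n}$ and then compute its generating function in $q$ (for $\bx$-degree) and $t$ (for $\bt$-degree). I would introduce a \emph{super Artin basis} $\SA_{m,n}$ generalizing the Artin basis $\cA_{m,n}$: its elements are the monomials $\bx^\al\th_T$ where $T\sbe[n]$ records the anticommuting variables that appear and $\al$ is a weak composition bounded by a staircase $\be(T)$ depending on $T$, with $\be(T)$ and the auxiliary data $\phi(T)$ as in \Cref{be(T):fig}. Taking $T=\emp$ must recover the $(m,n)$-staircase and the ordinary Artin basis, so the $t^0$ coefficient of the conjectured series is forced to be $[mn]!_m$, matching the known value of $\Hilb(\RG_{m,n},q)$; this is a useful consistency check, and the strata $\#T=n-k$ are precisely what should produce $\St^o[m,n,k]$.

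\textbf{Spanning.} First I would show that the images of the monomials in $\SA_{m,n}$ span $\SRG_{m,n}$. For $G=G(m,1,n)$ the purely polynomial invariants are generated by $\sum_i x_i^{jm}$ for $1\le j\le n$ (symmetric functions in the $x_i^m$), and the invariants linear in $\bt$ are spanned by the \emph{super} generators $\sum_i \th_i x_i^{jm-1}$, since $x_i\mapsto \ze x_{\pi(i)}$, $\th_i\mapsto\ze\th_{\pi(i)}$ forces the exponent $\equiv-1\pmod m$. Fixing a monomial order, I would compute the leading terms of these generators (and, where higher $\bt$-degree invariants are needed, of a Gröbner basis of $\spn{\bbC[\bx,\bt]^G_+}$) and run a straightening/division argument to rewrite an arbitrary monomial modulo the ideal as a $\bbC$-combination of elements of $\SA_{m,n}$. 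This yields $\dim\SRG_{m,n}\le\#\SA_{m,n}$ in every bidegree.

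\textbf{Independence (the main obstacle).} The crux is the matching lower bound, namely that $\SA_{m,n}$ is linearly independent in the quotient. I would follow the type $A$ strategy of Angarone, Commins, Karn, Murai and Rhoades~\cite{ACKMR:sch}: construct an explicit family of \emph{super harmonics} in the Macaulay inverse system of the invariant ideal that is dual to $\SA_{m,n}$, or otherwise produce enough independent harmonics to force equality of bigraded dimensions. \emph{This is where the genuine difficulty lies and why the result is only a conjecture.} The ACKMR harmonic and divided-difference calculus is tailored to $\fS_n$, and transporting it to the wreath product $G(m,1,n)$ — accommodating the $m$-fold cyclic factors, choosing the right operators, and proving the resulting harmonics are nonzero and independent — does not follow formally and is the open step.

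\textbf{Matching the series.} Granting that $\SA_{m,n}$ is a basis, its bigraded Hilbert series is $\sum_{T\sbe[n]} t^{\#T}\,M_T(q)$, where $M_T(q)=\prod_i[\be(T)_i+1]$ (adjusted by the $q$-shift recorded by $\phi(T)$) is the $\bx$-contribution of the monomials $\bx^\al\th_T$ with $\al\le\be(T)$. Collecting the terms with $\#T=n-k$, it then remains to prove the purely combinatorial identity
\[
\sum_{\#T=n-k} M_T(q) \;=\; \St^o[m,n,k] \;=\; [km]!_m\, h_{n-k}([m-1],[2m-1],\ldots,[(k+1)m-1]).
\]
I would establish this either algebraically, by expanding the right-hand side through the $\bx$-falling-factorial identity of \Cref{h:fall} together with the substitution used to pass from $\St[m,n,k]$ to $\St^o[m,n,k]$, or bijectively, by matching the basis monomials of exterior degree $n-k$ with the ordered super set partitions of \Cref{St^o:sum} so that $\bx$-degree plus the $\phi(T)$-shift becomes $\inv$. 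Either way this final step is routine once the basis is known; all of the difficulty is concentrated in the independence claim above.
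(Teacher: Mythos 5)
Your proposal follows essentially the same route as the paper: the paper likewise introduces the super Artin set $\SA_{m,n}$ (\Cref{SA:def}), leaves its basis property as the separate \Cref{SAG:con}, and proves only the implication that this basis property yields the conjectured Hilbert series, via exactly the stratification by $\#T=n-k$ and the staircase/$\phi(T)$ count you describe in your final step. Your identification of linear independence (transporting the ACKMR harmonic argument from $\fS_n$ to $G(m,1,n)$) as the genuinely open obstacle is consistent with the paper, which offers no proof of the basis claim and accordingly states \Cref{SRG:con} only as a conjecture.
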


As some evidence for this conjecture, we offer the following result.  Let
$\SRG_{m,n}^k$ be the $k$th graded piece of $\SRG_{m,n}$ in the theta grading.
\begin{prop}
We have
\begin{enumerate}
    \item[(a)] $\Hilb(\SRG_{m,n}^0)=[nm]!_m$.
    \vs{5pt}
    \item[(b)] $\Hilb(\SRG_{m,n}^n)=[m-1]^n$.
\end{enumerate}
\end{prop}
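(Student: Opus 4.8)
The plan is to analyze each $\th$-degree piece separately, taking advantage of the fact that $G=G(m,1,n)$ acts linearly on the anticommuting variables and hence preserves $\th$-degree. Consequently $\bbC[\bx,\bt]^G=\bigoplus_j(\bbC[\bx,\bt]^G)_j$ splits into $\th$-homogeneous pieces, so the defining ideal $I=\spn{\bbC[\bx,\bt]^G_+}$ is itself $\th$-graded and $\SRG_{m,n}^k=(\bbC[\bx,\bt])^k/I^k$, where a superscript denotes the $\th$-degree-$k$ component. The one bookkeeping fact I will use throughout is that the product of two $\th$-monomials either raises $\th$-degree or vanishes; thus if a homogeneous invariant $g$ has $\th$-degree $j$, every surviving monomial of $pg$ has $\th$-degree at least $j$.

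For (a) the $\th$-degree-$0$ component of $\bbC[\bx,\bt]$ is simply $\bbC[\bx]$, and by the bookkeeping fact the only generators of $I$ that contribute to $I^0$ are the $\th$-degree-$0$ invariants $\bbC[\bx]^G_+$. Hence $I^0=\spn{\bbC[\bx]^G_+}$ computed inside $\bbC[\bx]$, giving $\SRG_{m,n}^0\iso\RG_{m,n}$. The preceding Chevalley theorem then yields $\Hilb(\SRG_{m,n}^0)=[nm]!_m$.

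For (b) I would identify $(\bbC[\bx,\bt])^n=\bbC[\bx]\th_{[n]}$ with $\bbC[\bx]$ via $f\leftrightarrow f\th_{[n]}$, so that $\SRG_{m,n}^n\iso\bbC[\bx]/J$ with $J=\{f\in\bbC[\bx]:f\th_{[n]}\in I\}$. The inclusion $(x_1^{m-1},\ldots,x_n^{m-1})\sbe J$ comes from an explicit invariant: I first check that $P=\sum_{i=1}^n x_i^{m-1}\th_i$ lies in $\bbC[\bx,\bt]^G_+$, since a diagonal pseudoreflection scaling $\bfe_i$ by $\ze\in\mu_m$ sends $x_i^{m-1}\th_i$ to $\ze^m x_i^{m-1}\th_i=x_i^{m-1}\th_i$ while a type-(b) reflection interchanges the $i$th and $j$th summands. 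Multiplying $P$ by $\th_{[n]\sm\{i\}}$ annihilates every term but the $i$th and produces $\pm x_i^{m-1}\th_{[n]}\in I$, so $x_i^{m-1}\in J$.

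The main obstacle is the reverse inclusion $J\sbe(x_1^{m-1},\ldots,x_n^{m-1})$, which I would extract from the weights of the diagonal torus $A=\mu_m^n\le G$. The same bookkeeping used in (a), now pairing $\th_{[n]\sm S}$ against the $\th_S$-component, shows that $J$ is exactly the $\bbC[\bx]$-ideal generated by all components $g_S$ of positive-degree $\th$-homogeneous invariants $g=\sum_S g_S\th_S$. Every monomial $\bx^\al\th_S$ occurring in such a $g$ is $A$-fixed, which forces $\al_i\equiv-[i\in S]\pmod m$ for all $i$: if $i\in S$ then $\al_i\ge m-1$, so $g_S\in(x_i^{m-1})$ whenever $S\ne\emp$, while if $g$ has $\th$-degree $0$ and positive $\bx$-degree then $g_\emp$ is a positive-degree polynomial in the $x_i^m$ and again lies in $(x_1^{m-1},\ldots,x_n^{m-1})$. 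Combining the two inclusions gives $J=(x_1^{m-1},\ldots,x_n^{m-1})$, whence $\SRG_{m,n}^n\iso\bbC[\bx]/(x_1^{m-1},\ldots,x_n^{m-1})$ has Hilbert series $\prod_{i=1}^n(1+q+\cdots+q^{m-2})=[m-1]^n$.
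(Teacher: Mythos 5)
Your proof is correct. For part (a) you and the paper do essentially the same thing: the paper simply cites Chevalley's theorem, and your $\th$-grading argument is the standard reduction identifying $\SRG_{m,n}^0$ with the ordinary coinvariant algebra $\RG_{m,n}$. For part (b), however, your route is genuinely different. The paper treats $m=1$ separately and, for $m\ge 2$, cites the arguments of \cite[\S3]{SW:hdf} to identify $\SRG_{m,n}^n=\cH'\th_1\cdots\th_n$, where $\cH'$ is the space of polynomials annihilated by every $\partial_{x_i}^{m-1}$; the monomial basis $\{\bx^\al \mid \text{all } \al_i\le m-2\}$ of $\cH'$ then gives $[m-1]^n$. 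You instead work with the presentation $\SRG_{m,n}^n\iso\bbC[\bx]/J$, where $J=\{f \mid f\th_{[n]}\in I\}$, and compute $J$ self-containedly: the explicit invariant $P=\sum_i x_i^{m-1}\th_i$ yields $(x_1^{m-1},\ldots,x_n^{m-1})\sbe J$, and the $\mu_m^n$-weight constraint (any monomial $\bx^\al\th_S$ appearing in an invariant must have $\al_i\equiv -1\pmod m$ for $i\in S$ and $\al_i\equiv 0\pmod m$ otherwise) yields the reverse inclusion; both the identification of $J$ with the ideal generated by the $\th_S$-components of $\th$-homogeneous invariants and the weight argument are sound. Your quotient $\bbC[\bx]/(x_1^{m-1},\ldots,x_n^{m-1})$ is just the dual picture of the paper's inverse system $\cH'$ --- both have the monomials with all exponents at most $m-2$ as a basis --- so the computations agree, but yours buys self-containedness (no appeal to \cite{SW:hdf}) and uniformity in $m$ (when $m=1$ your $P=\sum_i\th_i$ puts $1\in J$, giving Hilbert series $\de_{n,0}=[0]^n$ with no case split), while the paper's buys brevity and ties the statement to the harmonic-space machinery developed in that reference.
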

\begin{proof}  Part (a) holds for all pseudo-reflection groups in characteristic zero by Chevalley's Theorem.  For part (b), when $m=1$ we have $\Hilb(\SRG_{1,n}^n)=\de_{n,0}=[m-1]^n$.  When $m\ge2$, by the arguments in \cite[\S3]{SW:hdf},
$$
\SRG_{m,n}^n =\mathcal{H}' \theta_1 \cdots \theta_n 
$$
    where $\mathcal{H}'$ is the space of polynomials annihilated by $\partial_{x_i}^{m-1}$ for all $i \in [n]$. A basis for $\mathcal{H}'$ consists of monomials $\bx^\alpha$ where all $\alpha_i \leq m-2$. Thus the contribution of the powers of $x_i$ is $[m-1]$ for $i\in[n]$ and the result follows.
\end{proof}

We  now describe a conjectural analogue of the Artin basis which would prove Conjecture~\ref{SRG:con}.
Given $T=\{t_1<t_2<\ldots<t_{n-k}\}$ we will extend a staircase composition to a composition $\be(T)$ as follows.  Consider the complement 
$U=[n]\setm T=\{u_1,u_2,\ldots,u_k\}$.  We will construct a weak composition $\be(T)=(\be_1,\ldots,\be_n)$ where we consider $T$ and $U$ as indexing the $n$ columns of the  diagram of $\be(T)$.  See Figure~\ref{be(T):fig} for an example.
Create an $(m,k)$-staircase by putting a column of $im-1$ boxes in the column for $u_i$ where $i\in[k]$.  For the column indexed by $t_j$, let $i\in[k+1]$ be such that $u_{i-1}<t_j<u_i$ where $u_0=0$ and $u_{k+1}=n+1$.  Put a column of $im-2$ boxes in the column for $t_j$.  Since $T\uplus U=[n]$, we have described all the $n$ parts of $\be(T)$.  It will be useful to have another weak composition $\phi=(\phi_1,\ldots,\phi_{k+1})$ where
$$
\phi_i = \text{number of columns of length $im-2$ in $\be(T)$}.
$$
Intuitively, the entries of $\phi$ counts the number of columns which were added to the staircase at each of the possible heights.  \Cref{be(T):fig} displays $\phi(T)$ above the diagram of $\be(T)$.

\begin{defn}
\label{SA:def}
The {\em super Artin set} for $\SRG_{m,n}$ is
$$
\SA_{m,n}=\{ \bx^\al \th_T \mid \text{$T\sbe[n]$ and $\al\le\be(T)$}\}.
$$
\end{defn}

Here is the conjecture alluded to above.

\begin{conj}
\label{SAG:con}
The set $\SA_{m,n}$ is a basis for $\SRG_{m,n}$.
\end{conj}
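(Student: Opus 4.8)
The plan is to prove \Cref{SAG:con} by the standard two-step route for coinvariant-type algebras: show that $\SA_{m,n}$ spans $\SRG_{m,n}$ (giving $\dim\le\#\SA_{m,n}$ in each bidegree) and that it is linearly independent there (giving the reverse bound). As a first sanity check, which also pins down the target dimensions, I would compute the bigraded cardinality of $\SA_{m,n}$. Writing $\be(T)=(\be_1,\ldots,\be_n)$, for fixed $T$ with $\#T=n-k$ the monomials $\bx^\al\th_T$ with $\al\le\be(T)$ contribute $t^{n-k}\prod_{i=1}^n[\be_i+1]$, and by the column description of $\be(T)$ the complement columns contribute $\prod_{i=1}^k[im]=[km]!_m$ while each $T$-column in gap $i$ contributes a factor $[im-1]$. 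Summing over all $T$ with a fixed gap profile $\phi$ and recognizing $\sum_\phi\prod_{i=1}^{k+1}[im-1]^{\phi_i}=h_{n-k}([m-1],[2m-1],\ldots,[(k+1)m-1])=\St[m,n,k]$ yields
\begin{align*}
\sum_{\bx^\al\th_T\in\SA_{m,n}}q^{|\al|}\,t^{|T|}
&=\sum_{k\ge0}[km]!_m\,\St[m,n,k]\,t^{n-k}\\
&=\sum_{k\ge0}\St^o[m,n,k]\,t^{n-k},
\end{align*}
exactly the series predicted by \Cref{SRG:con}. Since that Hilbert series is itself only conjectural, I cannot use it as an independent dimension input, so both spanning and independence must genuinely be established.

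For spanning I would fix a monomial order refining first the $\theta$-degree and then a lexicographic order on the $x$-exponents, and exhibit enough elements of the defining ideal $\spn{\bbC[\bx,\bt]^G_+}$ to straighten any $\bx^\al\th_T$ with $\al\not\le\be(T)$ into a combination of $\SA_{m,n}$ monomials. The relevant invariants are the bosonic $e_j(x_1^m,\ldots,x_n^m)$ of degree $jm$, which already straighten the purely bosonic part to the ordinary Artin basis $\cA_{m,n}$ of $\RG_{m,n}$, together with the fermionic invariants $\sum_{i=1}^n x_i^{jm-1}\theta_i$ (invariant because $\zeta^{jm-1}\cdot\zeta=\zeta^{jm}=1$ for $\zeta\in\mu_m$) and their polarizations in the remaining $\theta$-variables. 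The leading terms of these families are designed to match the forbidden exponents recorded by the staircase: a $U$-column exceeding height $im-1$ should be reducible by a bosonic invariant and a $T$-column exceeding height $im-2$ by a fermionic one. Checking that these leading terms cover every monomial outside $\SA_{m,n}$ is technical but, I expect, routine bookkeeping once the dictionary between staircase heights and invariant degrees is in place.

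The crux is linear independence. My preferred approach is an induction on $n$ modeled on the ordered form of the recursion \eqref{St:rec}, namely $\St^o[m,n,k]=[km]\,\St^o[m,n-1,k-1]+[(k+1)m-1]\,\St^o[m,n-1,k]$: construct a short exact sequence of bigraded vector spaces relating $\SRG_{m,n}$ to two copies of $\SRG_{m,n-1}$, suitably shifted in $q$- and $t$-degree, by peeling off the variables $x_n,\theta_n$, with $\SA_{m,n}$ splitting compatibly according to whether $n\in T$ or $n\in U$ and the exponent of $x_n$. If such a branching sequence can be built so that $\SA_{m,n}$ restricts to $\SA_{m,n-1}$-data on both ends, independence follows by induction, with base cases $n=0$ trivial and $m=1$ supplied by Angarone--Commins--Karn--Murai--Rhoades~\cite{ACKMR:sch}, the two extreme $\theta$-degrees being already handled by the preceding proposition. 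An alternative is an apolarity argument: identify $\SRG_{m,n}$ with the super-harmonic space $\mathcal{H}_{m,n}$ annihilated by the invariant differential operators, and produce for each $\bx^\al\th_T\in\SA_{m,n}$ a dual operator making the pairing matrix unitriangular. In either route the genuine obstacle is the fermionic mixing: the invariants $\sum_i x_i^{jm-1}\theta_i$ couple the $x$- and $\theta$-gradings, so one must rule out that straightening produces unexpected cancellations collapsing two distinct $\SA_{m,n}$ elements. Controlling these fermionic reductions--equivalently, proving that the branching sequence is exact rather than merely surjective--is where the difficulty concentrates and is the step I expect to absorb most of the work.
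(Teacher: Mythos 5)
There is no proof in the paper to compare against: \Cref{SAG:con} is stated as an open conjecture, and the paper proves only the surrounding proposition that \Cref{SAG:con} \emph{implies} \Cref{SRG:con}. Your opening computation --- matching the bigraded generating function of $\SA_{m,n}$ with $\sum_{k\ge0}\St^o[m,n,k]\,t^{n-k}$ via $[km]!_m\,h_{n-k}([m-1],\ldots,[(k+1)m-1])$ --- is essentially that proposition, and it is correct. But everything after it is a research plan, not a proof, and the two steps you defer are precisely where the conjecture lives.

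Concretely: for spanning, you propose straightening against $e_j(x_1^m,\ldots,x_n^m)$ and the fermionic invariants $\sum_i x_i^{jm-1}\th_i$ (these are indeed $G(m,1,n)$-invariant), but you never verify that their leading terms, in your chosen order, cover every monomial $\bx^\al\th_T$ with $\al\not\le\be(T)$, nor that the ideal they generate together with further polarizations exhausts $\spn{\bbC[\bx,\bt]^G_+}$ in the relevant degrees; calling this ``routine bookkeeping'' is not justified, since the interaction of the fermionic generators with the staircase $\be(T)$ is exactly the combinatorial content of the conjecture. For independence, your argument is explicitly conditional (``if such a branching sequence can be built''), and you acknowledge that proving exactness of the proposed sequence --- equivalently, ruling out cancellations from the fermionic mixing --- is where all the difficulty concentrates. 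That difficulty is real: even the $m=1$ case of this statement was open until the recent work of Angarone--Commins--Karn--Murai--Rhoades~\cite{ACKMR:sch}, whose proof required substantially more than a dimension count and a straightening order. So the proposal identifies a sensible strategy and the correct obstacles, but it does not close either of them; as it stands, the conjecture remains unproved.
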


\begin{prop}
  \Cref{SAG:con} implies \Cref{SRG:con}.
\end{prop}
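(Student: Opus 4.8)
The plan is to assume \Cref{SAG:con}, so that $\SA_{m,n}$ is a vector-space basis of $\SRG_{m,n}$, and then read the bigraded Hilbert series directly off this basis, letting $q$ track the $\bx$-degree and $t$ the $\bt$-degree. First I would group the basis monomials $\bx^\al\th_T$ according to their subset $T\sbe[n]$. For fixed $T$ the condition $\al\le\be(T)$ means $0\le\al_i\le\be_i(T)$ independently in each coordinate, so the $q$-generating function of the admissible $\al$ factors, giving
$$
\Hilb(\SRG_{m,n};q,t)=\sum_{T\sbe[n]} t^{\#T}\prod_{i=1}^n [\be_i(T)+1]_q.
$$

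Next I would evaluate the inner product using the explicit shape of $\be(T)$. Writing $U=[n]\setm T$ with $\#U=k$, the $k$ columns indexed by $U$ have heights $m-1,2m-1,\ldots,km-1$, so they contribute $\prod_{i=1}^{k}[im]_q=[km]!_m$; a column indexed by $t_j\in T$ lying in the $i$th gap has height $im-2$ and contributes $[im-1]_q$. Collecting the latter by the composition $\phi(T)$, whose $i$th part counts the columns of height $im-2$, the product becomes $[km]!_m\prod_{i=1}^{k+1}[im-1]_q^{\phi_i}$, and $\#T=n-k$.

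The key step is then to carry out the sum over all $T$ with $\#U=k$ fixed. Here I would use that $T\mapsto\phi(T)$ is a bijection onto the weak compositions $\phi=(\phi_1,\ldots,\phi_{k+1})$ of $n-k$: reading $[n]$ from left to right, the word consisting of $\phi_1$ letters of $T$, then $u_1$, then $\phi_2$ letters of $T$, then $u_2$, and so on, recovers both $U$ and $T$ uniquely, and every such composition arises. Since by definition
$$
\St[m,n,k]=h_{n-k}([m-1],[2m-1],\ldots,[(k+1)m-1])=\sum_{\phi}\prod_{i=1}^{k+1}[im-1]_q^{\phi_i},
$$
the sum running over exactly these compositions, the bijection yields $\sum_{\#U=k}\prod_i[im-1]_q^{\phi_i(T)}=\St[m,n,k]$. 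Substituting back and recalling $\St^o[m,n,k]=[km]!_m\,\St[m,n,k]$ gives
$$
\Hilb(\SRG_{m,n};q,t)=\sum_{k\ge0}[km]!_m\,\St[m,n,k]\,t^{n-k}=\sum_{k\ge0}\St^o[m,n,k]\,t^{n-k},
$$
which is \Cref{SRG:con}. I expect the only genuine obstacle to be bookkeeping: keeping the two off-by-one shifts straight, namely the heights $im-1$ and $im-2$ in $\be(T)$ versus the arguments $[im]$ and $[im-1]$ produced after adding one, and matching these against the definitions of $[km]!_m$ and $\St[m,n,k]$. The combinatorial heart, the correspondence $T\leftrightarrow\phi$, is a routine stars-and-bars argument.
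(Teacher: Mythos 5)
Your proposal is correct and takes essentially the same approach as the paper's proof: fix $T$, factor the contribution of the monomials $\bx^\al\th_T$ as $[km]!_m$ (from the $(m,k)$-staircase columns for $U$) times $\prod_{i=1}^{k+1}[im-1]_q^{\phi_i}$ (from the columns for $T$), and recognize the sum over all $T$ of size $n-k$ as $h_{n-k}([m-1],[2m-1],\ldots,[(k+1)m-1])=\St[m,n,k]$, hence the coefficient $\St^o[m,n,k]$ of $t^{n-k}$. The only difference is that you make explicit the coordinate-wise factorization of the $q$-generating function and the stars-and-bars bijection $T\leftrightarrow\phi(T)$ identifying the sum with the monomial expansion of $h_{n-k}$, details the paper leaves implicit.
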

\begin{proof}
  Fix $T\sbe[n]$ with $\#T=n-k$ and consider the monomials in $\SA_{m,n}$ whose theta-factor is $\th_T$.  The columns for $U$ form an $(m,k)$-staircase and so will contribute a $q$-factor of $[km]!_m$.  Similar reasoning and the definition of $\phi(T)$ shows that the columns for $T$ contribute a factor of
  $$
  [m-1]^{\phi_1}[2m-1]^{\phi_2}\cdots[(k+1)m-1]^{\phi_{k+1}}.
  $$
 So summing over all $T$ with cardinality $n-k$ results in a coefficient of
 $$
 [km]!_m\ h_{n-k}([m-1],[2m-1],\ldots,[(k+1)m-1]) = \St^o[m,n,k]
 $$
 for the power $t^{n-k}$ by equation ~\eqref{St:rec}.  Summing  over $k$ gives the desired Hilbert series.
\end{proof}

There is a way to index $\SA_{m,n}$ using  ordered  super set partitions.  Suppose that we are given $\om=(S_0/\ldots/S_{km})\comp [n^m]$ and $s\in[n]$.  We let
$$
\inv_s \om = \text{number of pairs in $\Inv\om$ with $s^0$ as first component}.
$$
These are the parts of the {\em inversion composition}
\beq
\label{eta}
\eta(\om)=(\inv_1\om,\inv_2\om,\ldots,\inv_n\om).
\eeq
For example, suppose
$$
\om = (0\ 4^2  4^0  4^1 \mid 1^0 3^2/1^1 3^0 / 1^2 3^1 \mid 2^0 / 2^1 / 2^2).
$$
Now $\inv_3 \om=4$ because of $(3^0,S_i)$ for $3\le i \le 6$.  Also $\inv_4\om=7$ caused by $(4^0,4^1)$ and $(4^0,S_i)$ for $i\in[6]$.  And $\inv_s \om=0$ for all other $s\in[4]$ so that
$$
\eta(\om)=(0,0,4,7).
$$
We also need the set
$$
T(\om) = 
\{ t\in[n] \mid \text{$t>\minb S_i$ where $S_i$ is the block containing $t^0$}\}.
$$
In the previous example, $T(\om)=\{3,4\}$ because $3^0$ is in a block with $1^1$, and $4^0$ is in a block with $0$.
\begin{prop}
  We have
  $$
  \SA_{m,n} = \{ \bx^{\eta(\om)} \th_{T(\om)} \mid \om\comp[n^m]\}.
  $$
\end{prop}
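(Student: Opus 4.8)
The plan is to prove the displayed equality by showing that $\om\mapsto(\eta(\om),T(\om))$ is a bijection from the ordered super set partitions $\om\comp[n^m]$ onto the index set $\{(\al,T)\mid T\sbe[n],\ \al\le\be(T)\}$ of \Cref{SA:def}. Since the monomials $\bx^\al\th_T$ attached to distinct pairs $(\al,T)$ are distinct, this at once yields $\SA_{m,n}=\{\bx^{\eta(\om)}\th_{T(\om)}\mid\om\comp[n^m]\}$. I would realize the bijection by an insertion procedure, adding the bases $1,2,\ldots,n$ to a partial ordered super partition one at a time in increasing order.

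Before the insertion, I would record the relevant structure of the two statistics. Writing $U=[n]\setm T(\om)$, the set $U$ is exactly the collection of minimum bases of the $k$ nonzero $m$-tuples of $\om$; indeed every nonzero base of the zero block and every non-minimal base of an $m$-tuple satisfies $b>\minb S_i$, whereas the minimum base of each $m$-tuple does not. In particular $\#U=k$, and for any base $b$ the quantity $r(b)=\#\{u\in U\mid u<b\}$ is the number of $m$-tuples with minimum base below $b$. Reading off \Cref{SA:def} then gives $\be(T)_b=(r(b)+1)m-1$ when $b\in U$ and $\be(T)_b=(r(b)+1)m-2$ when $b\in T$.

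The core of the argument is the insertion step, for which I would first note stability: when $b$ is inserted, the $m$-tuples already present are precisely those with minimum base in $U\cap[b-1]$, so there are $r(b)$ of them, and inserting any larger base afterward leaves $\inv_b\om$ unchanged (a newly founded $m$-tuple has minimum base exceeding $b$ and so contributes no inversion of type (1) to $b^0$, while a larger non-minimal base merely enlarges existing blocks without changing their number, order, or minimum base, and creates no inversion of type (2) involving $b$). If $b\in U$ then inserting $b$ founds a new $m$-tuple: there are $r(b)+1$ gaps in which to place it (all lying to the right of the zero block) and $m$ rotations; with the tuple in position $p$ and $b^0$ in its $q$-th block one computes $\inv_b\om=(r(b)+1-p)m+(m-q)$, which runs bijectively over $\{0,1,\ldots,(r(b)+1)m-1\}=\{0,\ldots,\be(T)_b\}$. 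If instead $b\in T$, then either $b$ is adjoined to the zero block with super-ordering beginning at some colour $c\in[m-1]$, in which case $b^0$ lies to the right of all $r(b)m$ existing nonzero blocks and has $c-1$ further elements of base $b$ to its right, giving $\inv_b\om=r(b)m+(c-1)\in\{r(b)m,\ldots,r(b)m+m-2\}$; or $b$ is adjoined to one of the $r(b)$ existing $m$-tuples, with $m$ choices for the block receiving $b^0$, giving $\inv_b\om\in\{0,\ldots,r(b)m-1\}$. The two subcases occupy complementary intervals, so together they biject the insertion choices with $\{0,\ldots,(r(b)+1)m-2\}=\{0,\ldots,\be(T)_b\}$.

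Assembling the steps, the choices made when inserting base $b$ correspond bijectively, via $b\mapsto\inv_b\om$, to the allowed values $\{0,\ldots,\be(T)_b\}$, these choices are independent across $b$ by stability, and the decision of whether $b$ founds a new $m$-tuple records exactly the membership $b\in U$ versus $b\in T(\om)$. Hence $\om\mapsto(\eta(\om),T(\om))$ is the desired bijection. The main obstacle is the careful bookkeeping in the insertion step: one must verify that each $\inv_b\om$ is genuinely insensitive to the later insertions, so that the encoding is well defined coordinate-by-coordinate, and that the position-and-rotation data at $b$ fills out, through two disjoint subcases for $b\in T$, the entire interval $\{0,\ldots,\be(T)_b\}$ without overlap.
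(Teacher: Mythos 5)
Your proof is correct and takes essentially the same approach as the paper's: both establish the bijection $\om\leftrightarrow(\al,T)$ by inserting the bases $1,2,\ldots,n$ in increasing order, with founding a new $m$-tuple versus joining an existing block (or the zero block) recording membership in $U$ versus $T$, and with the number of new inversions created at step $b$ ranging exactly over $\{0,1,\ldots,\be(T)_b\}$. Indeed, your explicit case analysis and stability argument supply precisely the verification that the paper's proof leaves to the reader.
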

\bprf
It suffices to define a weight-preserving bijection  from pairs $(T,\al)$ appearing in~\Cref{SA:def} to the $\om\comp[n^m]$.  Given $(T,\al)$ we construct $\om$ inductively as follows.  We start with $\om=(0)$ and insert all colorings of  $1,2,\ldots,n$ in order into blocks according to the following rules when it comes to inserting the colors of $k$.  An example follows this demonstration.  Note that for blocks $S_i$ with $i\ge1$ it suffices to specify the block containing $k^0$ since then the other colors are determined by~\eqref{S_{i+1}}.  And for the zero block, one just needs to specify the position of $k^0$ in its sequence to determine the sequence.
\ben
\item  If $k\in T$ then put $k^0$ in the existing block of $\om$, and in the correct position in its sequence if using the zero block, so that  exactly $\al_k$ new inversions will result.
\item If $k\in U$ then make $k^0$ a new block of $\om$ so that exactly $\al_k$ new inversions result.
\een
By the way we have constructed this map, it will be weight preserving as long as it is well defined in that there exists a block, and position in a sequence if necessary, so that $k^0$ causes $\al_k$ inversions.  Since $\al\le\be(T)$, this will follow if the $k$th part of $\be(T)$ is the maximum number of inversions which can be caused by $k^0$ at the $k$th step.
This is an easy induction on $k$ using the definition of $\be(T)$ and the rules for when a new $m$-tuple of blocks is created by $k^0$.  So we leave it to the reader.
It is also routine to describe the inverse map by reversing the above algorithm step-by-step, so these details are omitted as well.
\eprf

\begin{exa}
To illustrate the procedure in the previous proof, suppose $n=m=3$, $T=\{2,3\}$, and $\al=(1,2,4)$.  We start with  $\om=(0)$.  Since $1\not\in T$, we must place $1^0,1^1,1^2$ in their own blocks.  And in order to obtain $\al_1=1$ inversion caused by $1^0$, we let 
$$
\om = (0 \mid 1^2/1^0/1^1).
$$
Now $2\in T$ so we will place the colors of $2$ in blocks with other elements.  If we wish to have $2^0$ involved with $\al_2=2$ inversions then this forces
$$
\om = (0 \mid 1^2 2^0/1^0 2^1/1^1 2^2).
$$
Finally we have $3\in T$ and $\al_3=4$ which results in 
$$
\om = (0\ 4^2 4^0 4^1 \mid 1^2 2^0/1^0 2^1/1^1 2^2).
$$
\end{exa}

\subsection{The Chan-Rhoades generalized coinvariant algebra}

We will use the abbreviation
$$
S[n,k]  = S[1,n,k]
$$
for the $q$-Stirling numbers in type $A$ and similarly for the ordered variant.  For any polynomial in $q$ we use the notation
$$
f(q)\vert_{q^m} = f(q^m).
$$

Haglund--Rhoades--Shimozono \cite{HRS:osp} introduced \textit{generalized coinvariant algebras}
  \[ R_{n, k} = \bbC[\bx]/\langle x_1^k, \ldots, x_n^k, e_n(n), e_{n-1}(n), \ldots, e_{n-k+1}(n)\rangle. \]
The classical coinvariant algebra of $A_{n-1}$ is $R_{n, n} = \RG_{1,n}$. 
Chan--Rhoades \cite{CR:Rnk} introduced generalized coinvariant algebras for $G(m, 1, n)$ with $m \geq 2$,
  \[ R_{n, k}^{(m)} = \bbC[\bx]/\langle x_1^{km+1}, \ldots, x_n^{km+1}, e_n(\bx^m), e_{n-1}(\bx^m), \ldots, e_{n-k+1}(\bx^m)\rangle \]
where $\bx^m$ refers to $x_1^m, \ldots, x_n^m$. Set $R_{n, k}^{(1)} = R_{n, k}$. Haglund--Rhoades--Shimozono \cite{HRS:osp} proved
\begin{equation}
\label{CR:A}
\rev_q \Hilb(R_{n, k}^{(1)}; q) = S^o[n, k] = [k]! h_{n-k}([1], [2], \ldots, [k]), 
\end{equation}
where if $f(q)$ is a polynomial of degree $d$ then
$\rev f(q) = q^d f(1/q)$ is the polynomial with reversed coefficients.

\begin{prop}
\label{CR:2}
  For $m \geq 2$, we have
    \[ \rev_q \Hilb(R_{n, k}^{(m)}; q) = S_{\CR}^o[m,n,k]
    =[km]!_m\ h_{n-k}([1],[m+1],\ldots,[km+1]). \]
\end{prop}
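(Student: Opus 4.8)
The plan is to treat the two asserted equalities separately, since only the first is substantive. The right-hand equality $S_{\CR}^o[m,n,k]=[km]!_m\,h_{n-k}([1],[m+1],\ldots,[km+1])$ is purely formal: by the definition~\eqref{CR:def} we have $S_{\CR}^o[m,n,k]=[km]!_m\,S[m,n,k]$, and substituting the expression $S[m,n,k]=h_{n-k}([1],[m+1],\ldots,[km+1])$ supplied by~\eqref{S=h.q} in Theorem~\ref{S[m,n,k]:h} yields it immediately.

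For the left-hand equality $\rev_q\Hilb(R_{n,k}^{(m)};q)=S_{\CR}^o[m,n,k]$, the essential input is the Hilbert series of $R_{n,k}^{(m)}$ itself, which I would take from Chan--Rhoades~\cite{CR:Rnk}: they introduced these algebras and exhibited a generalized Garsia--Stanton/Artin monomial basis, and I would not reprove this. Granting their computation, the operator $\rev_q$ is under good control, since it is multiplicative on polynomials and fixes each palindromic $q$-integer $[jm]_q$, hence fixes the staircase factor $[km]!_m$; this reduces the matching to the non-palindromic homogeneous part, exactly as in the $m=1$ case~\eqref{CR:A} of Haglund--Rhoades--Shimozono~\cite{HRS:osp}.

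I would finish the identification in one of two ways. The combinatorial route is to exhibit the bijection between the Chan--Rhoades basis monomials and the CR-ordered set partitions $S_{\CR}^o([n^m],k)$ of this paper under which the reversed degree statistic becomes $\inv$, and then to invoke the generating-function interpretation $S_{\CR}^o[m,n,k]=\sum_{\om}q^{\inv\om}$ established earlier. The algebraic route, which I expect to be shortest to write in full, is to check that $\rev_q\Hilb(R_{n,k}^{(m)};q)$ satisfies the correct boundary condition together with the scaled recursion
\[
S_{\CR}^o[m,n,k]=[km]\,S_{\CR}^o[m,n-1,k-1]+[km+1]\,S_{\CR}^o[m,n-1,k],
\]
which follows from Proposition~\ref{S[m,n,k]:rec} upon multiplying by $[km]!_m$ and using $[km]!_m=[km]\,[(k-1)m]!_m$. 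In either case the main obstacle is localized entirely in the Chan--Rhoades basis, equivalently their Hilbert series; once that is granted, what remains is bookkeeping with palindromic $q$-integers and the complete-homogeneous recursion~\eqref{h:rec}.
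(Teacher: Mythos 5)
Your proposal is correct and follows essentially the same route as the paper: both split off the trivial equality via the definition~\eqref{CR:def} and Theorem~\ref{S[m,n,k]:h}, cite Chan--Rhoades~\cite{CR:Rnk} for the Hilbert series of $R_{n,k}^{(m)}$, and then finish by matching initial conditions and a recursion (the paper factors $[km]!_m$ out of the Cor.~4.11 sum and checks the recursion for $h_{n-k}([1],[m+1],\ldots,[km+1])$, which is just your scaled recursion $S_{\CR}^o[m,n,k]=[km]\,S_{\CR}^o[m,n-1,k-1]+[km+1]\,S_{\CR}^o[m,n-1,k]$ in unscaled form). The only cosmetic difference is that the paper performs the $[km]!_m$ extraction explicitly inside the Chan--Rhoades sum rather than appealing to palindromicity of the $q$-integers under $\rev_q$.
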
  
  \begin{proof}
    After accounting for $q$-reversals, \cite[Cor.~4.11]{CR:Rnk} gives
    \begin{align*}
      \rev_q \Hilb(R_{n, k}^{(m)}; q)
      &= \sum_{i=0}^{n-k} \binom{n}{i} q^{n-k-i}\ [m]_q^{n-i}\ ([k]!\ S[n-i, k])\vert_{q^m} \\
      &= [km]!_m\ \sum_{i=0}^{n-k} \binom{n}{i}\ (q[m])^{n-k-i}\ S[n-i, k]\vert_{q^m}
    \end{align*}
    for $m \geq 2$. One may check directly that the final sum satisfies the same initial conditions and recurrence as $h_{n-k}([1], [m+1], \ldots, [km+1])$.  So the theorem follows from~\eqref{CR:def} and Theorem~\ref{S[m,n,k]:h}.
  \end{proof}

\begin{rem}
We can unify the  cases $m=1$ in equation~\eqref{CR:A} and $m\ge2$ in Proposition~\ref{CR:2} by noting that in both we have 
  \[ \rev_q \Hilb(R_{n, k}^{(m)}; q) = \left(\prod_{i=1}^k [d_i]\right) h_{n-k}([e_1^*], \ldots, [e_{k+1}^*]) \]
where the product is over the \textit{degrees} of $G(m,1,n)$ and $e_i^*$ are its coexponents (see \cite[Table 1]{SW:hdf}), both listed in increasing order. Here we use the convention that, for $m=1$, $d_i = i$ and $e_i^* = i-1$, so in particular $e_1^* = 0$.
\end{rem}

\section{Identities and an open problem}

\subsection{Identities}

Let us define $q$-Stirling numbers of the first kind for $G(m,1,n)$ consistent with \Cref{OS} for $m \geq 2$ to be
$$
s[m,n,k]= (-1)^{n-k} e_{n-k}([1],[m+1],\ldots[(n-1)m+1]).
$$
The following results about $s[m,n,k]$ and $S[m,n,k]$ can be derived using standard techniques from their expressions in terms of elementary and complete homogeneous functions, from the associated recursions, or for part (a) from \Cref{h:fall}.  So the proofs have been suppressed.
\begin{thm}
We have the following for $m \geq 2$.
\begin{enumerate}
    \item[(a)] $\dil t^n = \sum_{k=0}^n S[m,n,k] (t-[1])(t-[m+1])\cdots(t-[km-k+1])$,
    \item[(b)] $\dil\sum_{k=0}^n s[m,n, k] t^k = (t-[1])(t-[m+1])\cdots(t-[nm-m+1])$.
    \item[(c)] $\dil\sum_{n\ge0} S(m,n,k)\frac{x^n}{n!}
    =\frac{e^x}{k!} \left(\frac{e^{mx}-1}{m}\right)^k$,
    \item[(d)] $\dil\sum_{n\ge0} S(m,n,k)t^k\frac{x^n}{n!}
    =\exp(1+t(e^{mx}-1)/m)$,
    \item[(e)] $\dil\sum_{n\ge0} s(m,n,k)\frac{x^n}{n!}
    =\frac{1}{k! m^k (1+mx)^{1/m}}\left(\log(1+mx)  \right)^k$,
\item[(f)] $\dil\sum_{n\ge0} s(m,n,k) t^k\frac{x^n}{n!}
    =(1+mx)^{(1+t)/m}$.\hfill\qed
\end{enumerate}
\end{thm}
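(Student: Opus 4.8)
The plan is to dispatch the two finite identities (a) and (b) directly from earlier results, and to treat the four exponential generating function identities (c)--(f) in two pairs, deriving the second identity of each pair from the first by a bivariate summation. For (a), I would substitute $x_k=[(k-1)m+1]$ into \Cref{h:fall}: under this substitution the coefficient $h_{n-k}(k+1)=h_{n-k}(x_1,\ldots,x_{k+1})$ becomes $h_{n-k}([1],[m+1],\ldots,[km+1])$, which equals $S[m,n,k]$ by \Cref{S[m,n,k]:h}, while the falling factorial $t\da_k^\bx$ becomes exactly the displayed product. For (b), I would read off the factored form of the generating polynomial of the elementary symmetric functions: since $\prod_{j=1}^n\big(t-[(j-1)m+1]\big)=\sum_{k=0}^n(-1)^{n-k}e_{n-k}\big([1],\ldots,[(n-1)m+1]\big)\,t^k$, the coefficient of $t^k$ is $s[m,n,k]$ by definition. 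Neither step requires more than a substitution.

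For the second kind, I would establish (c) from the recursion~\eqref{S(m,n,k):rr}. Writing $F_k(x)=\sum_{n\ge0}S(m,n,k)\,x^n/n!$, differentiating term by term and applying the recursion gives the linear ODE $F_k'(x)=(km+1)F_k(x)+F_{k-1}(x)$ with initial data $F_k(0)=\de_{k,0}$. Writing $G_k(x)$ for the proposed right-hand side, I would verify that $G_k$ solves the same system: differentiating $G_k$ produces, beyond the term $G_k(x)$, a term $G_{k-1}(x)e^{mx}$, and rewriting $e^{mx}=(e^{mx}-1)+1$ turns this into $km\,G_k(x)+G_{k-1}(x)$, so that $G_k'=(km+1)G_k+G_{k-1}$ as required; the initial conditions match since $G_k(0)=\de_{k,0}$. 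Part (d) then follows by multiplying (c) by $t^k$ and summing over $k$: the common factor $e^x$ pulls out and the remaining series collapses via $\sum_{k\ge0}z^k/k!=e^z$ with $z=t(e^{mx}-1)/m$, giving the generating function of (d).

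For the first kind the recursion coefficient depends on $n$, so rather than an $x$-ODE I would pass through the bivariate generating function and extract (e) from (f). Starting from (b) at $q=1$ I have $\sum_{n,k}s(m,n,k)\,t^k x^n/n!=\sum_{n\ge0}\frac{x^n}{n!}\prod_{j=0}^{n-1}(t-1-jm)$; recognizing the product as a scaled generalized falling factorial, so that $\tfrac{1}{n!}\prod_{j=0}^{n-1}(t-1-jm)=m^n\binom{a}{n}$ for the appropriate exponent $a$, and applying the binomial series $\sum_n\binom{a}{n}z^n=(1+z)^a$ with $z=mx$ yields the closed form of (f). Extracting the coefficient of $t^k$ from (f) --- by splitting off the $t$-independent factor $(1+mx)^{-1/m}$ and expanding $(1+mx)^{t/m}=\exp\!\big(\tfrac{t}{m}\log(1+mx)\big)$ --- then yields (e). The main obstacle throughout is bookkeeping rather than conceptual difficulty: one must identify each proposed closed form with the correct shifted-factorial or binomial series and confirm that the boundary terms (the $n=0$ and $k=0$ values, and the fractional exponent in the binomial expansion) line up precisely, since a misplaced shift by $1$ in an exponent or an argument is easy to introduce in exactly these manipulations.
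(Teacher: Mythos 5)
Your proposal is correct and is essentially the paper's own (suppressed) proof: the paper explicitly says these identities follow "from their expressions in terms of elementary and complete homogeneous functions, from the associated recursions, or for part (a) from \Cref{h:fall}," and your four arguments --- the substitution $x_k=[(k-1)m+1]$ into \Cref{h:fall} for (a), reading off coefficients of $\prod_j(t-[(j-1)m+1])$ for (b), the ODE system $F_k'=(km+1)F_k+F_{k-1}$, $F_k(0)=\de_{k,0}$ for (c)--(d), and the binomial series for (e)--(f) --- are precisely those techniques, executed correctly.

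One caveat you should make explicit rather than gloss over: your (correct) computations do not reproduce the printed formulas in (d) and (f), because those printed formulas contain typos. Your derivation of (d) gives $\exp\bigl(x+t(e^{mx}-1)/m\bigr)$, not $\exp\bigl(1+t(e^{mx}-1)/m\bigr)$; the printed version fails already at $x=0$, where the left side is $1$ but the right side is $e$. Likewise your binomial-series argument gives exponent $a=(t-1)/m$, i.e. $(1+mx)^{(t-1)/m}$, not the printed $(1+mx)^{(1+t)/m}$; the printed version fails at $n=1$, since $\sum_k s(m,1,k)t^k=t-1$, not $t+1$. Your corrected form of (f) is the one consistent with (e), which is printed correctly --- indeed you split off the factor $(1+mx)^{-1/m}$, which only exists under the exponent $(t-1)/m$. (The same issue occurs in (a), where the last factor should be $t-[(k-1)m+1]=t-[km-m+1]$ rather than $t-[km-k+1]$; your substitution produces the correct factor.) So your proof establishes the intended, corrected statements; as written, asserting that your closed forms "give" (d) and (f) verbatim would be false, and a referee-quality writeup should note the corrections.
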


We also get the following result immediately from~\cite[Theorem 2.6]{SS:qStB}.

\begin{thm}
Consider infinite matrices  
$$
s_m = [s[m,n,k]]_{n,k\ge0}
$$
and 
$$
S_m=[S[m,n,k]]_{n,k\ge0}.
$$
Then $s_mS_m = I$ for all $m\ge1$.\hqed 
\end{thm}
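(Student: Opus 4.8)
The plan is to read off $s_m$ and $S_m$ as a pair of mutually inverse change-of-basis matrices on the polynomial ring $\bbC(q)[t]$. Set $x_i = [(i-1)m+1]$ for $i\geq 1$ and let $p_k(t) = \prod_{i=1}^k (t-x_i)$, a monic polynomial of degree $k$; the family $\{p_k(t)\}_{k\geq 0}$ is a $\bbC(q)$-basis of $\bbC(q)[t]$, as is the monomial family $\{t^n\}_{n\geq 0}$. Parts (a) and (b) of the preceding theorem say precisely that $S_m$ and $s_m$ are the transition matrices between these two bases: part (a) gives $t^n = \sum_k S[m,n,k]\,p_k(t)$, expanding the monomials in the $p$-basis, while part (b) gives $p_n(t) = \sum_k s[m,n,k]\,t^k$, expanding the $p$-basis in the monomials (here one reads the factor $(t-[km-m+1])$ in (a) and $(t-[nm-m+1])$ in (b) as the common $x_i = [(i-1)m+1]$).

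First I would compose the two expansions. Inserting part (a) into part (b) yields
\begin{align*}
p_n(t) &= \sum_k s[m,n,k]\, t^k = \sum_k s[m,n,k] \sum_j S[m,k,j]\, p_j(t) \\
&= \sum_j \Bigl( \sum_k s[m,n,k]\, S[m,k,j] \Bigr) p_j(t).
\end{align*}
Because the $p_j(t)$ are linearly independent, comparing coefficients forces $\sum_k s[m,n,k]\, S[m,k,j] = \delta_{n,j}$, which is exactly the statement $s_m S_m = I$. The infinite products cause no trouble: $s[m,n,k]$ vanishes for $k>n$ and $S[m,k,j]$ vanishes for $j>k$, so each entry $(s_m S_m)_{n,j} = \sum_k s[m,n,k]\,S[m,k,j]$ is a finite sum over $j\leq k\leq n$, and both matrices are lower triangular with $1$'s on the diagonal. (The symmetric computation gives $S_m s_m = I$ as well, though only the stated order is required.)

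I expect no genuine obstacle beyond bookkeeping; the only point needing care is verifying that the generalized falling factorials appearing in parts (a) and (b) are built from the \emph{same} sequence $x_i = [(i-1)m+1]$, so that the $p_k$ in the two expansions are literally identical polynomials. Granting this, the result is the standard fact that the two triangular matrices relating a basis to its falling-factorial partner are inverse. Alternatively, and most quickly, one may bypass (a)--(b) entirely and invoke the general inverse-matrix identity \cite[Theorem 2.6]{SS:qStB}, which establishes this orthogonality for the Stirling-type matrices attached to an arbitrary sequence; specializing that sequence to $x_i = [(i-1)m+1]$ gives $s_m S_m = I$ at once for every $m\geq 1$, including the classical case $m=1$.
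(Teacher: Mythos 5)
Your argument is correct, but your primary route is genuinely different from the paper's: the paper's entire proof is the citation you mention only as an afterthought, namely that the result is immediate from \cite[Theorem 2.6]{SS:qStB}, the general fact that for any sequence $x_1,x_2,\ldots$ the matrices $\left[(-1)^{n-k}e_{n-k}(x_1,\ldots,x_n)\right]_{n,k\ge0}$ and $\left[h_{n-k}(x_1,\ldots,x_{k+1})\right]_{n,k\ge0}$ are mutually inverse, specialized to $x_i=[(i-1)m+1]$. Your main proof instead deduces the inversion from parts (a) and (b) of the preceding identity theorem, reading $S_m$ and $s_m$ as transition matrices between the monomial basis $\{t^n\}$ and the generalized falling-factorial basis $\{p_k(t)\}$ of $\bbC(q)[t]$; composing the expansions and comparing coefficients in the basis $\{p_j\}$ gives $\sum_k s[m,n,k]\,S[m,k,j]=\delta_{n,j}$, and your triangularity remarks correctly dispose of finiteness of the entries of the product. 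What your route buys is an in-paper, conceptual explanation of why the inversion holds (in effect a re-proof of the cited theorem in this special case); what the paper's route buys is brevity. Two points of care, both of which you at least implicitly address: the factor $[km-k+1]$ printed in part (a) must be read as $[km-m+1]=[(k-1)m+1]$ so that both expansions are built from the same sequence $x_i$ (you note this); and the preceding theorem is stated only for $m\ge2$, while the claim is for all $m\ge1$, so your main argument needs the (true, but unstated) observation that (a) and (b) hold verbatim at $m=1$ --- (b) is just the expansion of $\prod_{i=1}^n(t-x_i)$ and (a) follows from \Cref{h:fall} --- or else the $m=1$ case must be covered by the citation, which is how you close the gap.
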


\subsection{Open problem}

In Section~\ref{osn} we proved the alternating sum formula for $S^o[m,n,k]$ in Theorem~\ref{S^o[m]:alt} using the symmetric polynomial identity Theorem~\ref{h:fall}.  This identity can also be used to prove the corresponding formulas for $\St^o[m,n,k]$ and for $S_{\CR}^o[m,n,k]$ in Theorems~\ref{St^o:alt} and~\ref{S_CR:alt}, respectively.  However, we chose to give more combinatorial proofs in the latter two cases using sign-reversing involutions.  We have been unable to use this technique to prove Theorem~\ref{S^o[m]:alt}.  It would be interesting to do so.

%\bibliographystyle{plain}

%\nocite{*}
%\bibliographystyle{abbrvnat}

\nocite{*}
\bibliographystyle{alpha}

\end{document}